\documentclass[12pt]{article}

\usepackage{amssymb}
\usepackage{amsthm}
\usepackage{amsmath}
\usepackage{graphicx}
\usepackage{fullpage}
\usepackage{color}
\usepackage{enumerate}
 \numberwithin{equation}{section}
\usepackage{booktabs}
\allowdisplaybreaks

\newenvironment{customthm}[1]
  {\innercustomthm}
  {\endinnercustomthm}
\usepackage[pdftex]{hyperref}
 \usepackage{mathtools}
 \mathtoolsset{showonlyrefs}

\theoremstyle{plain}
\newtheorem{thm}{Theorem}[section]
\newtheorem{cor}[thm]{Corollary}

\newtheorem{lem}[thm]{Lemma}
\newtheorem{prop}[thm]{Proposition}

\newtheorem*{thm*}{Theorem}

\theoremstyle{definition}
\newtheorem{defn}[thm]{Definition}

\theoremstyle{remark}

\newtheorem{rem}[thm]{Remark}

\newcommand{\M}{\mathbb{M}}
\newcommand{\N}{\mathbb{N}}
\newcommand{\R}{\mathbb{R}}



\newcommand{\ffint}{\iint_{Q_r} \!\!\!\!\!\!\!\!\!\!\!\!\!\!\text{-----}}
\newcommand{\ffintXT}{\iint_{Q_r(x,t)} \!\!\!\!\!\!\!\!\!\!\!\!\!\!\!\!\!\!\!\!\!\! \text{-----}}

\newcommand{\fffint}{\iint_{Q_1} \!\!\!\!\!\!\!\!\!\!\!\!\!\!\text{-----}}
\newcommand{\fthetarint}{\iint_{Q_{\vartheta r}} \!\!\!\!\!\!\!\!\!\!\!\!\!\!\!\!\text{-----}}
\newcommand{\fthetaint}{\iint_{Q_{\vartheta}} \!\!\!\!\!\!\!\!\!\!\!\!\!\!\text{-----}}

\newcommand{\Div}{\text{div}}
\newcommand{\bp}{\begin{proof}[\ensuremath{\mathbf{Proof}}]}
\newcommand{\ep}{\end{proof}}
\newcommand{\bv}{{\bf v}}

\newcommand{\bu}{{\bf u}}
\newcommand{\bg}{{\bf g}}

\newcommand{\bw}{{\bf w}}

\newcommand{\Mmn}{\M^{m\times n}}

\newcommand{\be}{\begin{equation}}
\newcommand{\ee}{\end{equation}}

\begin{document}

\title{Partial regularity for type two doubly nonlinear parabolic systems}

\author{Ryan Hynd\footnote{Department of Mathematics, MIT.  Partially supported by NSF grant DMS-1554130 and an MLK visiting professorship.}}

\maketitle 

\begin{abstract}
We consider weak solutions $\bv:U\times (0,T)\rightarrow \R^m$ of the nonlinear parabolic system
$$
D\psi(\bv_t)=\Div DF(D\bv),
$$
where $\psi$ and $F$ are convex functions. This is a prototype for more general doubly nonlinear evolutions which arise in the study of structural properties of materials. Under the assumption that the second derivatives of $F$ are H\"older continuous, we show that $D^2\bv$ and $\bv_t$ are locally H\"older continuous except for possibly on a lower dimensional subset of $U\times (0,T)$.  Our approach relies on two integral identities, decay of the local space-time energy of solutions, and fractional time derivative estimates for $D^2\bv$ and $\bv_t$. 
\end{abstract}

\section{Introduction}
A doubly nonlinear evolution is a flow that typically involves a nonlinearity in the time derivative of a particular quantity of interest.  Such flows arise in the study of phase transitions \cite{Blanchard, Bonfanti,ColliLut, Visintin}, models for fracture and crack fronts \cite{FraMie, Larsen, MieCrack}, and hysteresis effects in materials \cite{MieRos, Visintin2}.  In the very simplest modeling scenarios, the flows in question consist of systems of nonlinear PDE.  To date, there have been 
many important results on the existence \cite{AGS, Arai,  Colli2, Colli, Mielke} and on the large time behavior of solutions to these systems \cite{HynTAM, HynLin, ManVes, Schimp}.  On the other hand, there are very few results involving the regularity or smoothness properties of such solutions. This is the topic of this paper.

\par In what follows, we will consider solutions $\bv: U\times(0,T)\rightarrow \R^m$ of the system of PDE 
\be\label{mainPDE}
D\psi(\bv_t)=\Div DF(D\bv)
\ee
where $U\subset\R^n$ is a bounded domain with smooth boundary and $T>0$. Here $\psi:\R^m\rightarrow\R$ and $F:\M^{m\times n}\rightarrow \R$ are convex functions and $\M^{m\times n}$ is the space of $m\times n$ matrices with real entries. We may write $\bv=(v^1,\dots, v^m)$ in terms of its $m$ component functions $v^i=v^i(x,t)$; this allows us to conveniently express the time derivative $\bv_t=(v^i_t)\in \R^m$ and spatial gradient matrix of
$D\bv=(v^i_{x_j})\in \M^{m\times n}$ of $\bv$.

\par Considering $\psi$ as a function of $w=(w^i)\in \R^m$ and $F$ as a function of $M=(M^i_j)\in \Mmn$ also allows us to write
$$
D\psi=(\psi_{w^1}, \dots,\psi_{w^m})\in \R^m\quad \text{and}\quad 
DF
=\left(\begin{array}{ccc}
F_{M^1_1} & \dots  & F_{M^1_n}\\
& \ddots &  \\
F_{M^m_1}& \dots  & F_{M^m_n}\\
\end{array}\right)\in \Mmn.
$$
With this notation, the system \eqref{mainPDE} can be expressed as the system of $m$ equations
$$
\psi_{w_i}(\bv_t)=\sum^n_{j=1}\left(F_{M^i_j}(D\bv)\right)_{x_j}=\sum^m_{k=1}\sum^n_{j,\ell=1}F_{M^i_jM^k_\ell}(D\bv) v^k_{x_\ell x_j}
$$
$i=1, \dots, m$.  In particular, as $D\psi$ is in general nonlinear, we may interpret the system \eqref{mainPDE} as a type of fully nonlinear system of parabolic PDE.

\par Unless otherwise noted, we will always suppose
$$
\psi\in C^2(\R^m)\quad \text{and}\quad F\in C^2(\Mmn).
$$
Another standing assumption will be that there are $\theta,\lambda, \Theta,\Lambda>0$ for which
\begin{equation}\label{UnifConv}
\theta |w_1-w_2|^2\le \left(D\psi(w_1)-D\psi(w_2)\right)\cdot\left(w_1-w_2\right)\le 
\Theta |w_1-w_2|^2\end{equation}
for each  $w_1,w_2\in \R^m$ and
\begin{equation}\label{UnifConv2}
\lambda |M_1-M_2|^2\le \left(DF(M_1)-DF(M_2)\right)\cdot\left(M_1-M_2\right)\le \Lambda |M_1-M_2|^2
\end{equation}
for each $M_1,M_2\in \Mmn$.  Here $M\cdot N:=\text{tr}(M^tN)$ and $|M|:=(M\cdot M)^{1/2}$ for each $M,N\in\Mmn$. In particular, $\psi$ and $F$ will always assumed to be uniformly convex and to grow quadratically.

\par For now, we will postpone providing definitions of a weak solution (Definition \ref{WeakSolnDefn}), the Hessian of mapping (equation \eqref{HessianOfbu}) and parabolic Hausdorff measure (Definition \ref{ParaHausMeas}) until later in this work. We only emphasize here that a weak solution is a solution for which \eqref{mainPDE} holds in an integral sense and has integrability properties as determined by natural identities satisfied by any smooth solution of \eqref{mainPDE}. The main assertion of this paper is as follows and contends that that every weak solution $\bv$ of \eqref{mainPDE} is a classical solution except for possibly on a lower dimensional subset of $U\times (0,T)$.

\begin{customthm}{1}\label{mainThm}
Assume $\bv$ is a weak solution of \eqref{mainPDE} in $U\times (0,T)$ and define 
$$
{\cal O}:=\{(x,t)\in U\times(0,T): D^2\bv \;\text{and}\; \bv_t \;\text{are H\"older continuous in a neighborhood of $(x,t)$}\}.
$$
Further suppose that each of the second derivatives of $F$ are H\"older continuous. Then 
there is a $\beta\in (0,1]$ such that 
$$
{\cal P}^{n+2-2\beta}(U\times (0,T)\setminus{\cal O})=0.
$$ 
\end{customthm}

\par It is reasonable to wonder if the conclusion of Theorem \ref{mainThm} is sharp. While we do not offer a precise estimate on the parabolic Hausdorff dimension of $U\times (0,T)\setminus{\cal O}$, 
we do know that weak solutions can have singularities. Even in the stationary case, weak solutions $\bu:U\rightarrow \R^m$ of
$$
-\Div(DF(D\bu))=0
$$
will not in general be $C^1$ at every point in $U$ \cite{DeG, Guisti, Lawson}.    Theorem \ref{mainThm} has also been
previously verified for the gradient flow system
$$
\bv_t=\Div DF(D\bv),
$$
which corresponds to \eqref{mainPDE} when $\psi(w)=\frac{1}{2}|w|^2$ \cite{Campanato1}. We remark that this result has recently been refined \cite{DuzMin, DuzMinSte}, where it was established that ${\cal P}^{n-\delta}(U\times (0,T)\setminus {\cal O})=0$ for some $\delta>0$.  

\par In a previous study \cite{HynTAM}, we analyzed the system 
\be\label{Fquadratic}
D\psi(\bv_t)=\Delta\bv.
\ee
This corresponds to the particular case of \eqref{mainPDE} with $F(M)=\frac{1}{2}|M|^2$.  We showed that if a weak solution $\bv$ satisfies $$\int^T_0\int_U|D\bv_t|^2dxdt<\infty,$$ 
then $D^2\bv$ and $\bv_t$ are H\"older continuous in a neighborhood of almost every point in $U\times (0,T)$. 

\par In this paper, we will incorporate the integrability $D\bv_t\in L^2_{\text{loc}}(U\times(0,T);\Mmn)$ into the definition of weak solution and also show that we can always construct such a weak solution (Appendix \ref{DirichletAppendix}).  Then we will improve our previous regularity result for \eqref{Fquadratic} by obtaining a local regularity condition for solutions of the system \eqref{mainPDE} and then showing that this local regularity condition holds on a lower dimensional set as measured by parabolic Hausdorff measure.  The keys to our enhanced insight are a local energy decay property described in Lemma \ref{BlowUplemma} and fractional time derivative estimates for $\bv_t$ \eqref{FractionVtDeriv} and $D^2\bv$ \eqref{FractionD2VDeriv}; these results largely rely on the assumption that the second derivatives of $F$ are H\"older continuous.

\par Let us also briefly remark on the case of the scalar equation
\be\label{ScalarDNE}
\psi'(v_t)=\Div DF(Dv),
\ee
which corresponds to \eqref{mainPDE} when $m=1$. Here $v:U\times(0,T)\rightarrow \R$, $\psi\in C^2(\R)$ and $F\in C^2(\R^n)$. Using the Legendre transform of $\psi$, we may write \eqref{ScalarDNE} as the fully nonlinear parabolic equation
$$
v_t=(\psi^*)'\left(D^2F(Dv)\cdot D^2v\right).
$$
We suspect that if $D^2F$ is H\"older continuous, then $D^2v$ and $v_t$ are (everywhere) H\"older continuous. We plan to investigate this possibility in a future study.

\par Lastly, we remark that equation \eqref{mainPDE} is known as a doubly nonlinear parabolic system of the {\it second} type.  A doubly nonlinear parabolic system of the {\it first} type is of the form
\begin{equation}\label{FirstTypePDE}
\partial_t\left(D\psi\left(\bv\right)\right)=\Div DF(D\bv).
\end{equation} 
This terminology likely originated in the monograph \cite{Visintin}. The essential difference between \eqref{mainPDE} and \eqref{FirstTypePDE} is that \eqref{mainPDE} is fully nonlinear while \eqref{FirstTypePDE} is quasilinear.  Nevertheless, solutions of both systems exhibit partial regularity. We recently showed that for a weak solution $\bv$ of \eqref{FirstTypePDE}, $D\bv$ is locally H\"older continuous except for possibly at points confined to a lower dimensional subset of $U\times (0,T)$ \cite{Hynd}.

\section{Two identities}
Our first goal is to derive two integral identities.  The first identity will be obtained by multiplying both sides of \eqref{mainPDE} by $\phi \bv_t$ and integrating by parts; here $\phi\in C^\infty_c(U\times(0,T))$. Likewise, the second identity is essentially derived by multiplying \eqref{mainPDE} by $\phi \bv_{tt}$ and then integrating by parts. We emphasize that in this section we will only consider classical solutions, and in the following section, we will prove these identities for appropriately defined weak solutions. 

\begin{prop}
Assume $\phi\in C^\infty_c(U\times(0,T))$ and $\bv\in C^\infty(U\times(0,T);\R^m)$ is a solution of \eqref{mainPDE}. Then 
\begin{equation}\label{MainIdentity}
\frac{d}{dt}\int_U \phi F(D\bv)dx+\int_U\phi D\psi(\bv_t)\cdot \bv_t dx=\int_U\left(F(D\bv)\phi_t -\bv_t\cdot DF(D\bv)D\phi \right)dx
\end{equation}
for each $t\in (0,T)$. 
\end{prop}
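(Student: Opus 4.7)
The plan is a direct, local (in $t$) computation: take the PDE \eqref{mainPDE}, multiply by the scalar $\phi$ times the vector $\bv_t$, integrate over $U$ at a fixed time $t$, and exploit two facts --- (i) $\phi(\cdot,t)$ vanishes near $\partial U$ so boundary terms from spatial integration by parts disappear, and (ii) the convex functional $F(D\bv)$ interacts with $\bv_t$ through the chain rule $\tfrac{d}{dt}F(D\bv) = DF(D\bv)\cdot D\bv_t$.

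\medskip

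First I would write the right-hand side of \eqref{mainPDE} in coordinates: with the index conventions introduced before the statement, the equation reads
\[
\psi_{w^i}(\bv_t) \;=\; \sum_{j=1}^n \bigl(F_{M^i_j}(D\bv)\bigr)_{x_j}, \qquad i=1,\dots,m.
\]
Multiplying by $\phi \,v^i_t$, summing on $i$, and integrating over $U$ gives
\[
\int_U \phi\, D\psi(\bv_t)\cdot \bv_t\,dx \;=\; \sum_{i,j}\int_U \phi\, v^i_t\,\bigl(F_{M^i_j}(D\bv)\bigr)_{x_j}\,dx.
\]
Since $\phi(\cdot,t)\in C^\infty_c(U)$, integrating by parts in $x_j$ contributes no boundary term, yielding
\[
\int_U \phi\, D\psi(\bv_t)\cdot \bv_t\,dx \;=\; -\sum_{i,j}\int_U \phi_{x_j}\,v^i_t\,F_{M^i_j}(D\bv)\,dx \;-\; \sum_{i,j}\int_U \phi\, v^i_{t\,x_j}\,F_{M^i_j}(D\bv)\,dx.
\]

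\medskip

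Next I would identify each of the two terms in a coordinate-free form. The first sum is precisely $\int_U \bv_t\cdot DF(D\bv)D\phi\,dx$. For the second, note that $v^i_{t\,x_j} = (v^i_{x_j})_t$ is the $(i,j)$ entry of $(D\bv)_t$, so the chain rule gives
\[
\sum_{i,j} F_{M^i_j}(D\bv)\, v^i_{t\,x_j} \;=\; DF(D\bv)\cdot (D\bv)_t \;=\; \frac{\partial}{\partial t}F(D\bv).
\]
Hence the second sum equals $\int_U \phi\,\partial_t F(D\bv)\,dx$, which by the product rule is
\[
\int_U \phi\,\partial_t F(D\bv)\,dx \;=\; \frac{d}{dt}\int_U \phi\, F(D\bv)\,dx \;-\; \int_U \phi_t\, F(D\bv)\,dx,
\]
where differentiation under the integral is justified since $\bv$ and $\phi$ are smooth and $\phi$ has compact support in $U\times(0,T)$. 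Substituting and rearranging recovers \eqref{MainIdentity}.

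\medskip

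There is no real obstacle here: because we are working with classical $C^\infty$ solutions and test functions compactly supported in $U\times (0,T)$, every integration by parts and every exchange of $\partial_t$ with $\int_U$ is automatic. The only thing to be careful about is the bookkeeping between the matrix pairing $\cdot$ and the coordinate expression; once that is matched consistently, the two sides of \eqref{MainIdentity} fall out of one application of integration by parts and one application of the chain rule. The more delicate version --- verifying this identity for weak solutions under the integrability conditions built into Definition \ref{WeakSolnDefn} --- is presumably deferred to the next section.
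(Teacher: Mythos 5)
Your proposal is correct and is essentially the paper's own argument: the paper explicitly describes this identity as obtained by multiplying \eqref{mainPDE} by $\phi\,\bv_t$ and integrating by parts, and its displayed computation is the same chain rule, single spatial integration by parts, and substitution of the PDE that you perform, merely organized in the reverse direction (starting from $\tfrac{d}{dt}\int_U\phi F(D\bv)\,dx$ rather than from the equation). Your bookkeeping of the pairing $\bv_t\cdot DF(D\bv)D\phi$ and the justification for differentiating under the integral are both fine, so nothing further is needed.
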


\begin{proof}
By direct computation, we have
\begin{align*}
\frac{d}{dt}\int_U \phi F(D\bv)dx
&=\int_U \left(\phi_t F(D\bv) + \phi DF(D\bv)\cdot D\bv_t\right)dx\\
&=\int_U \left( \phi_t F(D\bv) - \Div(\phi DF(D\bv))\cdot \bv_t\right)dx\\
&=\int_U \left(\phi_t F(D\bv) - DF(D\bv) D\phi\cdot \bv_t - \phi D\psi(\bv_t)\cdot \bv_t\right)dx.
\end{align*} 
\end{proof}
Let us now assume
\begin{equation}\label{psiFzeroAtzero}
\psi(0)=|D\psi(0)|=F(O)=|DF(O)|=0.
\end{equation}
Here $O$ is the $m\times n$ matrix of zeros.  This assumption can be made without loss of generality. Note that we can choose $a\in \R^n$ such that $\inf_{\R^m}\psi=\psi(a)$, and set $\tilde{\psi}(w):=\psi(w+a)-\psi(a)$ and $\tilde{F}(M):=F(M)-(F(O)+DF(O)\cdot M)$. Then $\tilde{\bv}:=\bv -a t$ satisfies
\be\label{tildemainPDE}
D\tilde{\psi}(\tilde{\bv}_t)=\Div D\tilde{F}(D\tilde{\bv})
\ee
and $\tilde{\psi}$ and $\tilde{F}$ satisfy \eqref{UnifConv}, \eqref{UnifConv2} and  \eqref{psiFzeroAtzero}.  Moreover, if $\tilde{\bv}$ is a classical solution of \eqref{tildemainPDE}, then $\bv$ is a classical solution of \eqref{mainPDE}.

\par It now follows from  \eqref{UnifConv} that 
\begin{equation}\label{MoreEstPsi}
\begin{cases}
 \frac{1}{2}\theta|w|^2\le \psi(w)\le\frac{1}{2}\Theta |w|^2\\
 \frac{1}{2}\theta|w|^2\le D\psi(w)\cdot w-\psi(w)\le\frac{1}{2}\Theta |w|^2\\
\theta |w|^2\le D\psi(w)\cdot w\le \Theta|w|^2\\
\theta |w|\le |D\psi(w)|\le\Theta |w|,
 \end{cases}
\end{equation}
and from \eqref{UnifConv2} that
\begin{equation}\label{MoreEstF}
\begin{cases}
 \frac{1}{2}\lambda|M|^2\le F(M)\le\frac{1}{2}\Lambda |M|^2\\
 \frac{1}{2}\lambda|w|^2\le DF(M)\cdot M-F(M)\le\frac{1}{2}\Lambda |M|^2\\
\lambda |M|^2\le DF(M)\cdot M\le \Lambda|M|^2\\
\lambda |M|\le |DF(M)|\le\Lambda |M|.
 \end{cases}
\end{equation}
We will make use of these simplifications to derive a useful estimate on smooth solutions of \eqref{mainPDE}.

\begin{cor}\label{CorBound1}
Assume $\eta\in C^\infty_c(U\times(0,T))$ with $\eta\ge 0$ and $\bv\in C^\infty(U\times(0,T);\R^m)$ is a solution of \eqref{mainPDE}. Then 
there is a constant $C$ depending only on $\theta, \lambda, \Theta,$ and $\Lambda$ such that 
\begin{equation}\label{EnergyBound1}
\max_{0\le t\le T}\int_U\eta^2|D\bv|^2 dx + \int^T_0\int_U \eta^2|\bv_t|^2dxdt 
\le C\int^T_0\int_U\left(\eta|\eta_t|+|D\eta|^2\right) |D\bv|^2dxdt.
\end{equation}
\end{cor}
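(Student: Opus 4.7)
The plan is to apply the identity \eqref{MainIdentity} with the test function $\phi = \eta^2$ and then carry out a standard Cacciopoli-type absorption argument using the quadratic bounds on $\psi$, $F$, $D\psi$, and $DF$ collected in \eqref{MoreEstPsi} and \eqref{MoreEstF}.

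First, with $\phi = \eta^2$ we have $\phi_t = 2\eta\eta_t$ and $D\phi = 2\eta D\eta$. Fix $s \in (0,T)$ and integrate \eqref{MainIdentity} over $(0,s)$. Since $\eta$ has compact support in $U \times (0,T)$, the boundary term at $t=0$ is zero, and we obtain
\begin{equation}
\int_U \eta^2(\cdot,s) F(D\bv(\cdot,s))\,dx + \int_0^s \!\!\int_U \eta^2\, D\psi(\bv_t)\cdot\bv_t\, dxdt
= \int_0^s\!\!\int_U \bigl(2\eta\eta_t F(D\bv) - 2\eta\,\bv_t\cdot DF(D\bv)\, D\eta\bigr)dxdt.
\end{equation}

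Next I would use the pointwise bounds from \eqref{MoreEstPsi} and \eqref{MoreEstF}. On the left, $F(D\bv) \ge \tfrac{1}{2}\lambda |D\bv|^2$ and $D\psi(\bv_t)\cdot\bv_t \ge \theta|\bv_t|^2$. On the right, $F(D\bv) \le \tfrac{1}{2}\Lambda|D\bv|^2$ and $|DF(D\bv)| \le \Lambda|D\bv|$. Thus
\begin{equation}
\tfrac{\lambda}{2}\!\int_U \eta^2|D\bv(\cdot,s)|^2 dx + \theta\!\int_0^s\!\!\int_U \eta^2|\bv_t|^2 dxdt
\le \Lambda\!\int_0^s\!\!\int_U \eta|\eta_t||D\bv|^2 dxdt + 2\Lambda\!\int_0^s\!\!\int_U \eta|D\eta||\bv_t||D\bv|\, dxdt.
\end{equation}
The last term is absorbed via Young's inequality: with parameter $\epsilon = \theta/(2\Lambda)$,
\begin{equation}
2\Lambda\, \eta|D\eta||\bv_t||D\bv| \le \tfrac{\theta}{2}\eta^2|\bv_t|^2 + \tfrac{2\Lambda^2}{\theta}|D\eta|^2|D\bv|^2.
\end{equation}
Moving the $\bv_t$ term to the left side and combining constants yields
\begin{equation}
\tfrac{\lambda}{2}\!\int_U \eta^2|D\bv(\cdot,s)|^2 dx + \tfrac{\theta}{2}\!\int_0^s\!\!\int_U \eta^2|\bv_t|^2 dxdt
\le C\!\int_0^T\!\!\int_U\bigl(\eta|\eta_t|+|D\eta|^2\bigr)|D\bv|^2\, dxdt,
\end{equation}
with $C$ depending only on $\theta,\lambda,\Theta,\Lambda$.

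Finally, take the supremum over $s \in (0,T)$ in the first term on the left and the limit $s \to T$ in the second; both quantities are controlled by the same right-hand side, so adding the resulting two inequalities (and relabeling the constant) gives \eqref{EnergyBound1}. There is no real obstacle here — the only delicate point is the choice of $\epsilon$ in Young's inequality so that the $|\bv_t|^2$ term can be absorbed on the left, which is why uniform convexity of $\psi$ (giving the coercivity constant $\theta$) is essential.
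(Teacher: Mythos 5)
Your proposal is correct and follows essentially the same route as the paper: take $\phi=\eta^2$ in \eqref{MainIdentity}, integrate in time, apply the quadratic bounds \eqref{MoreEstPsi}--\eqref{MoreEstF}, and absorb the mixed term by Young's inequality with the weight $\theta$. The only cosmetic difference is that the paper bounds both left-hand terms in one chain of inequalities (arriving at the explicit constant $C=\Lambda(1+2\Lambda/\theta)/(\tfrac12\min\{\theta,\lambda\})$) rather than splitting into a supremum and a limit and adding, but this is the same argument.
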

\begin{proof}  Choose $\phi=\eta^2$ in \eqref{MainIdentity}. Employing \eqref{UnifConv}, \eqref{MoreEstPsi} and \eqref{MoreEstF} and integrating \eqref{MainIdentity} over the interval $[0,t]$ gives  
\begin{align*}
&\frac{\lambda}{2}\int_U\eta(x,t)^2|D\bv(x,t)|^2 dx+\theta\int^t_0\int_U\eta^2|\bv_t|^2 dxds\\
&\hspace{1in}\le \int_U\eta(x,t)^2F(D\bv(x,t)) dx+\int^t_0\int_U\eta^2 D\psi(\bv_t)\cdot \bv_t dxds\\
&\hspace{1in}= \int^t_0\int_U\left(F(D\bv)2\eta\eta_t -\bv_t\cdot DF(D\bv)2\eta D\eta \right)dxds\\
&\hspace{1in}\le \int^t_0\int_U\left(\Lambda|D\bv|^2\eta|\eta_t| -\bv_t\cdot DF(D\bv)2\eta D\eta \right)dxds\\
&\hspace{1in}\le \int^t_0\int_U\left(\Lambda|D\bv|^2\eta|\eta_t| +\eta|\bv_t|\cdot 2\Lambda|D\bv||D\eta| \right)dxds\\
&\hspace{1in}= \int^t_0\int_U\left(\Lambda|D\bv|^2\eta|\eta_t| +\sqrt{\theta}\eta|\bv_t|\cdot \frac{2\Lambda}{\sqrt{\theta}}|D\bv||D\eta| \right)dxds\\
&\hspace{1in}\le \Lambda\left(1+\frac{2\Lambda}{\theta}\right)\int^t_0\int_U\left(\eta|\eta_t|+|D\eta|^2\right)|D\bv|^2 dxds+\frac{\theta}{2}\int^t_0\int_U\eta^2|\bv_t|^2 dxds.
\end{align*}
Consequently, the assertion holds with
$$
C=\frac{\displaystyle\Lambda\left(1+\frac{2\Lambda}{\theta}\right) }{\frac{1}{2}\min\{\theta,\lambda\}}.
$$
\end{proof}
We have one more identity to derive, which involves the second derivatives of $F$. Let ${\cal S}^2(\Mmn)$ denote the space of 
symmetric bilinear forms on $\Mmn\times\Mmn$. We define $D^2F(M)\in {\cal S}^2(\Mmn)$, the Hessian of $F$ at $M$, as
$$
D^2F(M): \Mmn\times \Mmn\rightarrow\R; 
(\xi,\zeta)\mapsto\sum^n_{j,\ell=1}\sum^m_{i,k=1}F_{M^i_jM^k_\ell}(M)\xi^i_j\zeta^k_\ell.
$$
For $\xi\in \Mmn$, we also define $D^2F(M)\xi:\Mmn\rightarrow \R$ as the linear form $\zeta\mapsto D^2F(M)(\xi,\zeta)$. 
Also note that we can re-express \eqref{UnifConv2} as 
\be\label{UnifConv3}
\lambda |\xi|^2\le D^2F(M)(\xi,\xi)\le\Lambda|\xi|^2\quad (M,\xi\in \Mmn).
\ee

\par Using this definition, we have the following identity for smooth solutions of \eqref{mainPDE}.

\begin{prop}
Let $\psi^*$ denote the Legendre transform of $\psi$. Assume $\phi\in C^\infty_c(U\times(0,T))$ and $\bv\in C^\infty(U\times(0,T);\R^m)$ is a solution of \eqref{mainPDE}. Then 
\begin{align}\label{SecondIdentity}
&\frac{d}{dt}\int_U \phi\psi^*(D\psi(\bv_t))dx+\int_U\phi D^2F(D\bv)(D\bv_t, D\bv_t) dx=\\
&\hspace{1in}\int_U\left(\psi^*(D\psi(\bv_t))\phi_t -\bv_t\cdot D^2F(D\bv)D\bv_tD\phi \right)dx
\end{align}
for each $t\in (0,T)$. 
\end{prop}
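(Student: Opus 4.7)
The plan is to differentiate the quantity $\phi\psi^*(D\psi(\bv_t))$ in time, substitute the PDE \eqref{mainPDE} into the resulting expression, and then integrate by parts once in space. The guiding identity is the Legendre duality: because $\psi$ is $C^2$ and uniformly convex by \eqref{UnifConv}, the map $D\psi:\R^m\to\R^m$ is a $C^1$-diffeomorphism whose inverse is $D\psi^*$, so $D\psi^*(D\psi(w))=w$ for every $w\in\R^m$. Consequently
$$
\partial_t\bigl[\psi^*(D\psi(\bv_t))\bigr]=D\psi^*(D\psi(\bv_t))\cdot \partial_t D\psi(\bv_t)=\bv_t\cdot \partial_t D\psi(\bv_t).
$$

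Next I would use \eqref{mainPDE} together with the smoothness of $\bv$ and $F$ to exchange the $t$-derivative with the divergence, obtaining
$$
\partial_t D\psi(\bv_t)=\partial_t\Div DF(D\bv)=\Div\bigl[D^2F(D\bv)D\bv_t\bigr],
$$
where $D^2F(D\bv)D\bv_t\in\Mmn$ is the matrix with entries $\sum_{k,\ell}F_{M^i_jM^k_\ell}(D\bv)v^k_{tx_\ell}$. Combined with the product rule for $\phi\psi^*(D\psi(\bv_t))$, this yields
$$
\frac{d}{dt}\int_U\phi\psi^*(D\psi(\bv_t))dx=\int_U\phi_t\psi^*(D\psi(\bv_t))dx+\int_U\phi\, \bv_t\cdot\Div\bigl[D^2F(D\bv)D\bv_t\bigr]dx.
$$

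To finish, I would integrate by parts in $x$ on the last term. Because $\phi\in C^\infty_c(U\times(0,T))$ has no boundary contribution,
$$
\int_U\phi\,\bv_t\cdot\Div\bigl[D^2F(D\bv)D\bv_t\bigr]dx=-\int_U D(\phi\bv_t)\cdot D^2F(D\bv)D\bv_t\,dx,
$$
and expanding $D(\phi\bv_t)=\phi\,D\bv_t+\bv_t\otimes D\phi$ in the pairing with the symmetric form $D^2F(D\bv)$ splits this integral into the two terms $-\int_U\phi\, D^2F(D\bv)(D\bv_t,D\bv_t)dx$ and $-\int_U\bv_t\cdot D^2F(D\bv)D\bv_tD\phi\,dx$. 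Rearranging gives \eqref{SecondIdentity}.

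The main obstacle is essentially bookkeeping rather than substance: one must keep straight the multi-index convention for $D^2F(D\bv)D\bv_t$ (a matrix obtained by pairing the Hessian form with one of its two matrix arguments) so that the integration-by-parts step cleanly produces both a quadratic form in $D\bv_t$ and the vector $D^2F(D\bv)D\bv_tD\phi\in\R^m$ paired against $\bv_t$. Once this notation is fixed and the diffeomorphism property of $D\psi$ is invoked, everything else reduces to the chain rule and one integration by parts.
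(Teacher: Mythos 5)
Your proposal is correct and follows essentially the same route as the paper: the chain rule via the inverse relation $D\psi^*\circ D\psi=\mathrm{id}$, time-differentiation of \eqref{mainPDE} to write $\partial_t D\psi(\bv_t)=\Div\bigl[D^2F(D\bv)D\bv_t\bigr]$, and a single spatial integration by parts with $D(\phi\bv_t)=\phi D\bv_t+\bv_t\otimes D\phi$. No substantive differences from the paper's argument.
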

\begin{proof}
Recall that $D\psi^*$ and $D\psi$ are inverse mappings of $\R^n$. We can use this fact and differentiate equation \eqref{mainPDE} in time to get 
\begin{align*}
\frac{d}{dt}\int_U \phi\psi^*(D\psi(\bv_t))dx&=\int_U\left(\phi_t\psi^*(D\psi(\bv_t)) +\phi \bv_t\cdot \partial_t(D\psi(\bv_t))\right)dx\\
&=\int_U\left(\phi_t\psi^*(D\psi(\bv_t)) + \phi\bv_t\cdot \partial_t(\Div(DF(D\bv))\right)dx\\
&=\int_U\left(\phi_t\psi^*(D\psi(\bv_t)) + \phi\bv_t\cdot \Div(D^2F(D\bv)D\bv_t)\right)dx\\
&=\int_U\left(\phi_t\psi^*(D\psi(\bv_t)) -\bv_t\cdot D^2F(D\bv)D\bv_tD\phi -\phi D^2F(D\bv)(D\bv_t, D\bv_t)\right)dx.
\end{align*} 
\end{proof}

\begin{cor}
Assume $\eta\in C^\infty_c(U\times(0,T))$ with $\eta\ge 0$ and $\bv\in C^\infty(U\times(0,T);\R^m)$ is a solution of \eqref{mainPDE}. Then 
there is a constant $C$ depending only on $\theta, \lambda, \Theta,$ and $\Lambda$ such that 
\begin{equation}\label{EnergyBound2}
\max_{0\le t\le T}\int_U\eta^2|\bv_t|^2 dx + \int^T_0\int_U \eta^2|D\bv_t|^2dxdt 
\le C\int^T_0\int_U\left(\eta|\eta_t|+|D\eta|^2\right) |\bv_t|^2dxdt.
\end{equation}
\end{cor}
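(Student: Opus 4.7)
The plan is to mirror the proof of Corollary \ref{CorBound1}, but with the second identity \eqref{SecondIdentity} replacing \eqref{MainIdentity}. Choose $\phi = \eta^2$ in \eqref{SecondIdentity} and integrate over $[0,t]$ for arbitrary $t\in(0,T)$. The plan is then to use convexity/coercivity on the left-hand side, estimate the right-hand side by Young's inequality, and absorb.

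The two ingredients that drive the estimate are as follows. First, by the Legendre duality identity $\psi^*(D\psi(w)) = D\psi(w)\cdot w - \psi(w)$, the second line of \eqref{MoreEstPsi} gives
\[
\tfrac{1}{2}\theta|w|^2 \le \psi^*(D\psi(w)) \le \tfrac{1}{2}\Theta|w|^2.
\]
Applied to $w=\bv_t$, this controls both the $\tfrac{d}{dt}$ term on the left (from below) and the $\psi^*(D\psi(\bv_t))\phi_t$ term on the right (from above, in terms of $|\bv_t|^2$). Second, \eqref{UnifConv3} gives the lower bound
\[
\int_U \phi\, D^2F(D\bv)(D\bv_t,D\bv_t)\,dx \ge \lambda \int_U \eta^2|D\bv_t|^2\,dx,
\]
and, via Cauchy--Schwarz applied to the positive definite bilinear form $D^2F(D\bv)$, the pointwise bound $|D^2F(M)(\xi,\zeta)| \le \Lambda|\xi||\zeta|$, which is exactly what is needed to control the mixed term on the right.

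For the right-hand side, the $\phi_t$ term is bounded by $\Theta \int (\eta|\eta_t|)|\bv_t|^2$. For the boundary term, one writes
\[
\bigl|\bv_t\cdot D^2F(D\bv)D\bv_t\, D\phi\bigr| \le 2\Lambda\, \eta|D\eta|\,|\bv_t|\,|D\bv_t|,
\]
and applies Young's inequality in the form $2\Lambda\,\eta|D\eta|\,|\bv_t|\,|D\bv_t| \le \tfrac{\lambda}{2}\eta^2|D\bv_t|^2 + \tfrac{2\Lambda^2}{\lambda}|D\eta|^2|\bv_t|^2$. Adding $\tfrac{\theta}{2}\int\eta^2|\bv_t|^2\,dx$ on the left from the lower bound on $\psi^*\circ D\psi$ and $\lambda\int\eta^2|D\bv_t|^2$ from ellipticity, and then absorbing the $\tfrac{\lambda}{2}\eta^2|D\bv_t|^2$ piece into the left, yields \eqref{EnergyBound2} with a constant of essentially the same form as in Corollary \ref{CorBound1}.

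I do not expect any substantive obstacle; the argument is a direct parallel of the one for \eqref{EnergyBound1}. The only point worth double-checking is the Legendre duality step, since the left-hand side involves $\psi^*\circ D\psi$ rather than $\psi$ itself, and one must verify that the two-sided quadratic bound \eqref{MoreEstPsi} still applies after this composition; this is immediate from the identity $\psi^*(D\psi(w)) = D\psi(w)\cdot w - \psi(w)$.
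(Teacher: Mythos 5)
Your proposal is correct and follows essentially the same route as the paper's proof: take $\phi=\eta^2$ in \eqref{SecondIdentity}, bound $\psi^*(D\psi(\bv_t))$ above and below by $\tfrac{\Theta}{2}|\bv_t|^2$ and $\tfrac{\theta}{2}|\bv_t|^2$ via $\psi^*(D\psi(w))=D\psi(w)\cdot w-\psi(w)$ and \eqref{MoreEstPsi}, use \eqref{UnifConv3} for the $|D\bv_t|^2$ term, and absorb the mixed term by Young's inequality with the same splitting $\tfrac{\lambda}{2}\eta^2|D\bv_t|^2+\tfrac{2\Lambda^2}{\lambda}|D\eta|^2|\bv_t|^2$. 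Your explicit Cauchy--Schwarz justification of $|D^2F(M)(\xi,\zeta)|\le\Lambda|\xi||\zeta|$ is a point the paper uses implicitly, but there is no substantive difference.
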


\begin{proof}
We will choose $\phi=\eta^2$ in \eqref{SecondIdentity}. Observe, 
\begin{align*}
&\frac{\theta}{2}\int_U\eta(x,t)^2|\bv_t(x,t)|^2 dx+\lambda\int^t_0\int_U\eta^2|D\bv_t|^2 dxds\\
&\hspace{.7in}\le \int_U \eta(x,t)^2\psi^*(D\psi(\bv_t(x,t)))dx+\int^t_0\int_U\eta^2 D^2F(D\bv)(D\bv_t, D\bv_t) dxds\\
&\hspace{.7in}=\int^t_0\int_U\left(\psi^*(D\psi(\bv_t))2\eta\eta_t -\bv_t\cdot D^2F(D\bv)D\bv_t2\eta D\eta \right)dxds\\
&\hspace{.7in}\le\int^t_0\int_U\left(\Theta|\bv_t|^2\eta|\eta_t| +|\bv_t|\Lambda|D\bv_t|2\eta |D\eta| \right)dxds\\
&\hspace{.7in}=\int^t_0\int_U\left(\Theta|\bv_t|^2\eta|\eta_t| +\frac{2\Lambda}{\sqrt{\lambda}}|\bv_t||D\eta|\cdot  \sqrt{\lambda}\eta|D\bv_t| \right)dxds\\
&\hspace{.7in}=\left(\Theta+\frac{2\Lambda^2}{\lambda}\right)\int^t_0\int_U\left(\eta|\eta_t|+|D\eta|^2\right)|\bv_t|^2dxds +\frac{\lambda}{2}\int^t_0\int_U\eta^2|D\bv_t|^2 dxds.
\end{align*}
So we can take 
$$
C=\frac{\displaystyle\Theta+\frac{2\Lambda^2}{\lambda}}{\frac{1}{2}\min\{\theta,\lambda\}}.
$$
\end{proof}

\section{Weak Solutions}
The estimates \eqref{EnergyBound1} and \eqref{EnergyBound2} lead us to the following definition of a weak solution of \eqref{mainPDE}.  
Note carefully that we will make this definition for $\psi$ and $F$ that are only assumed to be continuously differentiable because this is the minimal regularity the definition requires. Otherwise (and aside from Proposition \ref{CompactnessProp} below) we will assume that $\psi$ and $F$ are twice continuously 
differentiable. 
 
\begin{defn}\label{WeakSolnDefn}
Let $\psi\in C^1(\R^m)$ and $F\in C^1(\Mmn)$. A mapping $\bv\in L^2_{\text{loc}}(U\times (0,T);\R^m)$ is a {\it weak solution} of \eqref{mainPDE} on $U\times (0,T)$ if it satisfies 
\begin{equation}\label{NaturalSpace}
\begin{cases}
D\bv\in L^\infty_{\text{loc}}((0,T); L^{2}_{\text{loc}}(U;\Mmn))\\\\
\bv_t\in L^\infty_{\text{loc}}((0,T); L^2_{\text{loc}}(U;\R^m))\\\\
D\bv_t\in L^2_{\text{loc}}(U\times(0,T); \Mmn),
\end{cases}
\end{equation}
and
\begin{equation}\label{WeakSolnCond}
\int_U D\psi(\bv_t(x,t))\cdot  \bw(x)dx + \int_U DF(D\bv(x,t))\cdot D\bw(x)dx=0
\end{equation}
for all $\bw\in H^{1}_0(U;\R^m)$ and almost every $t\in (0,T)$.
\end{defn}

\par Recall that for an open subset $V\subset\R^n$, the Sobolev space $H^1_0(V; \R^m)$ is defined as the closure of $C^\infty_c(V;\R^m)$ in the norm 
$$
\|\bu\|_{H^1_0(V; \R^m)}:=\left(\int_V|D\bu|^2dx\right)^{1/2}.
$$
Moreover, its continuous dual space is $H^{-1}(V; \R^m):=\left(H^1_0(V; \R^m)\right)^*$.  Also recall that for a given Hilbert space $H$,
$$
AC^2([t_0,t_1]; H)
$$
is the space of paths $\gamma:[t_0,t_1]\rightarrow H$ that are differentiable almost everywhere 
with $\dot\gamma\in L^2([t_0,t_1]; H)$.  In particular, these paths are absolutely continuous and the fundamental theorem of calculus holds for such paths (Remark 1.1.3 \cite{AGS}).  


\par Our first of several results involving the integrability and continuity properties of weak solutions is as follows.  In this and subsequent assertions, we will identify equivalence classes of integrable mappings with their continuous representatives whenever it is possible to do so. 

\begin{prop}\label{WeakContProp}
Assume $\bv$ is a weak solution of \eqref{mainPDE}. Suppose $[t_0,t_1]\subset (0,T)$ and $V\subset\subset U$ is open. Then 
$\bv\in AC^2([t_0,t_1]; H^1(V;\R^m))$ and $D\psi(\bv_t)\in AC^2([t_0,t_1]; H^{-1}(V;\R^m))$.  
\end{prop}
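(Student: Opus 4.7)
The plan is to reduce both assertions to the Bochner-space fundamental theorem of calculus (cf.\ \cite{AGS}, Remark 1.1.3): if $u$ is a locally integrable function on $(t_0,t_1)$ with values in a Hilbert space $X$ and its distributional time derivative $\dot u$ lies in $L^2((t_0,t_1); X)$, then $u$ admits a representative in $AC^2([t_0,t_1]; X)$ satisfying $u(t)=u(t_0)+\int_{t_0}^t \dot u(s)\,ds$.

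For the first assertion, I would begin by observing that because $V\subset\subset U$ and $[t_0,t_1]\subset(0,T)$, the local integrability conditions in \eqref{NaturalSpace} self-improve to $\bv_t, D\bv_t \in L^2(V\times[t_0,t_1])$; in particular, $\bv_t \in L^2([t_0,t_1]; H^1(V;\R^m))$. Since $\bv_t$ is by definition the distributional time derivative of $\bv$, the criterion above applied with $X=H^1(V;\R^m)$ immediately produces a representative of $\bv$ lying in $AC^2([t_0,t_1]; H^1(V;\R^m))$.

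For the second assertion, I would first exploit the weak equation \eqref{WeakSolnCond}: a formal integration by parts gives, for a.e.\ $t$, the identification
\[
D\psi(\bv_t(\cdot,t)) = \Div DF(D\bv(\cdot,t))
\]
in $H^{-1}(U;\R^m)$, and hence in $H^{-1}(V;\R^m)$ via the natural restriction (using zero extension of test functions from $V$ to $U$). It therefore suffices to show that the right-hand side is $AC^2$ in time. Re-running the first-claim argument on the pair $(D\bv, D\bv_t)$ yields $D\bv \in AC^2([t_0,t_1]; L^2(V;\Mmn))$. The bound \eqref{UnifConv2} together with convexity of $F$ forces $DF$ to be globally $\Lambda$-Lipschitz, so composition with $DF$ preserves $AC^2$ into $L^2$, via
\[
\|DF(D\bv(\cdot,t))-DF(D\bv(\cdot,s))\|_{L^2(V)} \le \Lambda\,\|D\bv(\cdot,t)-D\bv(\cdot,s)\|_{L^2(V)}.
\]
Finally, $\Div : L^2(V;\Mmn)\to H^{-1}(V;\R^m)$ is bounded linear, so composing produces an element of $AC^2([t_0,t_1]; H^{-1}(V;\R^m))$, as required.

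The main technical subtlety I anticipate is verifying that the pointwise-in-$t$ weak identity $D\psi(\bv_t) = \Div DF(D\bv)$ legitimately transfers to an equality of Bochner-measurable paths into $H^{-1}(V;\R^m)$, so that the $AC^2$ regularity of the right-hand side is inherited by the left. This reduces to checking that both sides, regarded as such paths, coincide almost everywhere; once this is established via density of test functions and separability of $H^{-1}(V;\R^m)$, the remaining steps (Lipschitz composition and continuity of $\Div$) are routine.
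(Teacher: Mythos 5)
Your proposal is correct and follows essentially the same route as the paper: the first assertion is the Bochner-space fundamental theorem of calculus applied using $\bv_t,D\bv_t\in L^2$ locally, and the second transfers the $AC^2$ regularity of $D\bv$ in $L^2(V;\Mmn)$ to $D\psi(\bv_t)$ in $H^{-1}(V;\R^m)$ via the weak equation, the $\Lambda$-Lipschitz bound on $DF$ coming from \eqref{UnifConv2}, and the boundedness of $\Div:L^2\to H^{-1}$. The paper compresses your last three steps into the single estimate \eqref{DpsiDvtSlope}, but the substance is identical.
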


\begin{proof}
As $\bv_t\in L^2([t_0,t_1];L^2(V;\R^m))$, $\bv\in AC^2([t_0,t_1]; L^2(V;\R^m))$; and since  \\ $D\bv_t\in L^2([t_0,t_1];L^2(V;\Mmn))$, $D\bv\in AC^2([t_0,t_1]; L^2(V;\Mmn))$. It follows that $\bv\in AC^2([t_0,t_1]; H^1(V;\R^m))$.  Moreover, in view of \eqref{WeakSolnCond}, 
\begin{align}\label{DpsiDvtSlope}
\|D\psi(\bv_t(\cdot,t))-D\psi(\bv_t(\cdot,s))\|_{H^{-1}(V;\R^m)}&\le \|DF(D\bv(\cdot,t))-DF(D\bv(\cdot,s))\|_{L^{2}(V;\Mmn)}\nonumber \\
&\le \Lambda \|D\bv(\cdot,t)-D\bv(\cdot,s)\|_{L^{2}(V;\Mmn)}. 
\end{align}
Thus, $D\psi(\bv_t): [t_0,t_1]\rightarrow H^{-1}(V;\R^m)$ is absolutely continuous as asserted. 
\end{proof}

\par Employing regularity results for elliptic PDE, we can deduce that the third spatial derivatives of weak solutions are locally square integrable in space and time.  To this end, we will denote ${\cal S}^2(\R^n;\R^m)$ for symmetric bilinear mappings from 
$(\R^n)^2\rightarrow \R^m$. For $V\subset U$ open and $\bu\in H^2(V;\R^m)$, $D^2\bu \in L^2(V;{\cal S}^2(\R^n;\R^m))$ is
defined as
\be\label{HessianOfbu}
D^2\bu(x)(z^1,z^2):=\sum^n_{i,j=1}\bu_{x_ix_j}(x)z^1_iz^2_j\quad (x\in V, z^1,z^2\in \R^n).
\ee
Here, of course, $\bu_{x_ix_j}\in L^2(V;\R^m)$ is a weak second partial derivative of $\bu$ for each $i,j=1,\dots,n$. We will also write 
$$
|D^2\bu|^2:=\sum^n_{i,j=1}|\bu_{x_ix_j}|^2.
$$
We can just as easily define $D^3\bu\in L^2(V;{\cal S}^3(\R^n;\R^m))$ in terms of the weak partial derivatives $\bu_{x_ix_jx_k}$ for any mapping 
$\bu \in H^3(V;\R^m)$. Here ${\cal S}^3(\R^n;\R^m)$ is the space of trilinear mappings on $(\R^n)^3$ with values in $\R^m$.

\begin{prop}\label{H2prop}
Assume $\bv$ is a weak solution of \eqref{mainPDE} on $U\times(0,T)$. \\
(i) Then 
\begin{equation}\label{H2Boundv}
D^2\bv\in L^\infty_{\text{loc}}((0,T); L^2_\text{loc}(U; {\cal S}^2(\R^n;\R^m))).
\end{equation}
In particular, equation \eqref{mainPDE} holds almost everywhere in $U\times(0,T)$. \\
(ii) Moreover,
\be\label{H3Boundv}
D^3\bv\in L^2_{\text{loc}}(U\times(0,T); {\cal S}^3(\R^n;\R^m)).
\ee
\end{prop}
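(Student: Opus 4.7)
The plan is to exploit the fact that for a.e.\ $t\in(0,T)$ the map $\bv(\cdot,t)$ is an $H^1_{\text{loc}}$ weak solution of the uniformly elliptic system
\[
-\Div DF(D\bv(\cdot,t)) = -D\psi(\bv_t(\cdot,t)),
\]
with right-hand side in $L^2_{\text{loc}}(U;\R^m)$ by \eqref{NaturalSpace}. Standard interior regularity for uniformly elliptic systems (via Nirenberg's difference-quotient method) then yields $\bv(\cdot,t)\in H^2_{\text{loc}}$ for a.e.\ $t$, and iterating the same argument once more gives the $H^3_{\text{loc}}$ bound in part (ii).

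For part (i), fix $V\subset\subset W\subset\subset U$ and pick $\eta\in C^\infty_c(W)$ with $\eta\equiv 1$ on $V$. Writing $\Delta_h^k\bu(x):=h^{-1}(\bu(x+he_k)-\bu(x))$ for $|h|$ sufficiently small, I would test \eqref{WeakSolnCond} with $\bw=-\Delta_{-h}^k(\eta^2\Delta_h^k\bv(\cdot,t))$, which lies in $H^1_0(U;\R^m)$ after extension by zero. Using discrete integration by parts, the uniform monotonicity \eqref{UnifConv2}, and Young's inequality to absorb the cutoff terms supported on $\{D\eta\neq 0\}$, I expect the $h$-uniform bound
\[
\lambda\int_U\eta^2|\Delta_h^k D\bv(\cdot,t)|^2\,dx \le C\int_W\bigl(|D\bv(\cdot,t)|^2+|\bv_t(\cdot,t)|^2\bigr)dx.
\]
Letting $h\to 0$ and summing over $k$ gives $D^2\bv(\cdot,t)\in L^2(V;{\cal S}^2(\R^n;\R^m))$ with the same bound for a.e.\ $t$; taking the essential supremum over $t\in[t_0,t_1]\subset(0,T)$ and using \eqref{NaturalSpace} produces \eqref{H2Boundv}. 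Once $D^2\bv\in L^2_{\text{loc}}$, the chain rule gives $\Div DF(D\bv)\in L^2_{\text{loc}}(U\times(0,T))$ as an honest function, and \eqref{WeakSolnCond} then forces \eqref{mainPDE} to hold almost everywhere.

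For part (ii), I would repeat the procedure one order higher. Subtracting \eqref{WeakSolnCond} written for the translate $\bv(\cdot+he_k,t)$ from the original and dividing by $h$ yields, for all $\bw\in H^1_0(W;\R^m)$,
\[
\int_U A_h(x,t)\Delta_h^k D\bv\cdot D\bw\,dx + \int_U\Delta_h^k D\psi(\bv_t(\cdot,t))\cdot\bw\,dx = 0,
\]
where
\[
A_h(x,t):=\int_0^1 D^2F\bigl(D\bv(x,t)+sh\Delta_h^k D\bv(x,t)\bigr)\,ds
\]
is uniformly elliptic with bounds $\lambda,\Lambda$ by \eqref{UnifConv3}. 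Since $D\bv(\cdot,t)\in H^1_{\text{loc}}$ by part (i), the vector $\eta^2\Delta_h^k D\bv(\cdot,t)$ is an admissible test. Substituting it, and using the Lipschitz bound $|\Delta_h^k D\psi(\bv_t)|\le\Theta|\Delta_h^k\bv_t|$ coming from \eqref{UnifConv} together with the standard translation estimate $\|\Delta_h^k\bv_t\|_{L^2(W)}\le\|D\bv_t\|_{L^2(W')}$ on a slightly larger $W'\subset\subset U$, should yield
\[
\int_V|\Delta_h^k D^2\bv(\cdot,t)|^2\,dx \le C\int_{W'}\bigl(|D^2\bv(\cdot,t)|^2+|D\bv_t(\cdot,t)|^2\bigr)dx.
\]
Integrating in $t$ over $[t_0,t_1]$, applying \eqref{H2Boundv} and $D\bv_t\in L^2_{\text{loc}}$ from \eqref{NaturalSpace}, and sending $h\to 0$ produces \eqref{H3Boundv}.

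The step I expect to require the most care is part (ii): one must use part (i) to ensure that $\Delta_h^k D\bv$ is a legitimate test vector, recognize that the averaged matrix $A_h$ inherits uniform ellipticity from $D^2F$ through \eqref{UnifConv3}, and keep track of the cutoff and boundary terms so that the final constant depends only on $\eta$, $\lambda$, $\Lambda$, and $\Theta$. The rest is a routine extension of the elliptic difference-quotient method to the evolutionary setting, made possible by the integrability \eqref{NaturalSpace} built into the definition of weak solution.
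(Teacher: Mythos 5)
Your part (i) is essentially the paper's argument: for a.e.\ $t$ the time slice solves the uniformly convex Euler--Lagrange system with right-hand side $-D\psi(\bv_t(\cdot,t))\in L^2_{\text{loc}}$, and the standard difference-quotient $W^{2,2}$ estimate (which the paper simply cites) gives exactly \eqref{H2EstimateSpec}; taking essential suprema in time via \eqref{NaturalSpace} and then integrating by parts in \eqref{WeakSolnCond} yields \eqref{H2Boundv} and the a.e.\ form of the equation. No complaints there.

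The genuine gap is in part (ii). First, a small but real problem: $\eta^2\Delta_h^k D\bv(\cdot,t)$ is $\Mmn$-valued, so it is not an admissible test function in your differenced identity, whose test maps $\bw$ are $\R^m$-valued. More substantively, substituting any single $H^1_0(U;\R^m)$ test function into
\[
\int_U A_h\,D(\Delta_h^k\bv)\cdot D\bw\,dx+\int_U\Delta_h^k D\psi(\bv_t)\cdot\bw\,dx=0
\]
can only produce bounds on $D(\Delta_h^k\bv)=\Delta_h^k D\bv$, i.e.\ on second derivatives of $\bv$ (which you already have from part (i)), never on $D^2(\Delta_h^k\bv)=\Delta_h^k D^2\bv$. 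The estimate you display is an interior $H^2$ estimate for a divergence-form system whose coefficients $A_h$ are merely bounded, measurable and uniformly elliptic; it does not follow from one substitution, and it does not hold with a constant depending only on $\eta,\lambda,\Lambda,\Theta$ for rough coefficients. To reach third derivatives you must difference (or differentiate) the equation a second time, and then a commutator term involving $\Delta^j_h A_h$, i.e.\ spatial differences of $D^2F(D\bv)$ paired with $\Delta_h^k D\bv$, appears; \eqref{UnifConv3} alone gives no control of it, and your bookkeeping of constants silently drops it. The paper's route is different at exactly this point: it first passes, using difference quotients, the Lipschitz chain rule for $DF$ and $D\psi$, and $D\bv_t\in L^2_{\text{loc}}$ from \eqref{NaturalSpace}, to the differentiated system \eqref{MainEqnTimeSliceXi} satisfied by $\bv_{x_i}(\cdot,t)$, whose right-hand side $D^2\psi(\bv_t)\bv_{tx_i}$ is in $L^2_{\text{loc}}$, and then applies interior $W^{2,2}$ estimates to that system to get \eqref{H3EstimateYeah}, which integrates in time to \eqref{H3Boundv}. (The delicate issue of the regularity of the coefficients $D^2F(D\bv)$ is still the crux there, and the paper treats it tersely, but at least the elliptic $H^2$ theory is invoked for the correct, once-differentiated system.) Your proposal stops one derivative short of that and needs either to reproduce this step or to supply an argument for the coefficient-difference term; as written, the key inequality in part (ii) would not be obtained.
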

\begin{proof}
$(i)$ By \eqref{WeakSolnCond}, 
\begin{equation}\label{MainEqnTimeSlice}
D\psi(\bv_t(\cdot,t))=\Div DF(D\bv(\cdot,t))
\end{equation}
weakly in $U$ for almost every $t\in (0,T)$. Now let $V,W\subset U$ be open with $V\subset\subset W\subset\subset U$. 
As $\bv(\cdot,t)$ satisfies \eqref{MainEqnTimeSlice}, the associated $W^{2,2}_{\text{loc}}(U)$ estimates (Proposition 8.6 in \cite{GiaMar} or Theorem 1, Section 8.3 of \cite{Evans}) for uniformly elliptic Euler-Lagrange equations imply $D^2\bv(\cdot, t)\in L^2(V; {\cal S}^2(\R^n;\R^m))$ and
\begin{align}\label{H2EstimateSpec}
\int_{V} |D^2\bv(x,t)|^2dx&\le C\int_W\left(|D\psi(\bv_t(\cdot,t))|^2+|D\bv(x,t)|^2\right)dx\nonumber \\
&\le C\int_W\left(\Lambda^2|\bv_t(x,t)|^2+|D\bv(x,t)|^2\right)dx
\end{align}
for almost every $t\in (0,T)$. Here $C$ is a constant that is independent of $\bv$. The assertion \eqref{H2Boundv} now follows from recalling \eqref{NaturalSpace} and taking the essential supremum in the above inequality locally in time.

\par Now that we have also established \eqref{H2Boundv}, we can integrate by parts in \eqref{WeakSolnCond} to get 
$$
\int_U \left[D\psi(\bv_t(x,t))-\Div(DF(D\bv(x,t)))\right]\cdot \bw(x) dx=0,
$$
for all $\bw\in H^1_0(U; \R^m)$ and almost every time $t\in (0,T)$. Thus $D\psi(\bv_t) =\Div DF(D\bv)$ almost everywhere in $U\times(0,T)$. 

\par $(ii)$ For almost every time $t\in (0,T)$, we again recall that \eqref{MainEqnTimeSlice} holds. We also have that $D\bv_t\in L^2_{\text{loc}}(U\times (0,T);\Mmn)$. 
Using difference quotients (as defined in Chapter 4 of \cite{GiaMar} or Chapter 5 of \cite{Evans}), we can differentiate \eqref{MainEqnTimeSlice} with respect to $x_i$ and show that 
\begin{equation}\label{MainEqnTimeSliceXi}
D^2\psi(\bv_t(\cdot,t))\bv_{tx_i}(\cdot,t)=\Div\left[D^2F(D\bv(\cdot,t))D\bv_{x_i}(\cdot,t)\right]
\end{equation}
holds weakly in $U$ for almost every $t\in (0,T)$ and each $i=1,\dots, n$. It also follows that $\bv_{x_i}(\cdot,t)\in H^2_{\text{loc}}(U;\R^m)$ for almost every $t\in (0,T)$ and 
\begin{align}\label{H3EstimateYeah}
\int_{V} |D^2\bv_{x_i}(x,t)|^2dx&\le C\int_W\left(|D^2\psi(\bv_t(x,t))\bv_{tx_i}(x,t)|^2+|D\bv_{x_i}(x,t)|^2\right)dx\\
&\le C\int_W\left(\Lambda^2|D\bv_t(x,t)|^2+|D^2\bv(x,t)|^2\right)dx.
\end{align}
The assertion follows by integrating this inequality locally in time. 
\end{proof}

We can now establish an improved higher space-time integrability of $D^2\bv$ and $\bv_t$.

\begin{cor}\label{PeeCorollary}
Assume $\bv$ is a weak solution of \eqref{mainPDE} on $U\times(0,T)$. There exists an exponent $p>2$ such that
\begin{equation}\label{extraIntegrablePee}
D^2\bv\in L^{p}_{\text{loc}}(U\times(0,T); {\cal S}^2(\R^n;\R^m) )
\end{equation}
and
\begin{equation}\label{vtIntegrablePee}
\bv_t\in L^{p}_{\text{loc}}(U\times(0,T);\R^m).
\end{equation}
\end{cor}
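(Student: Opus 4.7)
Both higher-integrability statements will follow from a single application of the classical parabolic Gagliardo--Nirenberg embedding. Recall that for any $\bw \in L^\infty(t_0,t_1; L^2(V)) \cap L^2(t_0,t_1; H^1_0(V;\R^m))$ one has
\begin{equation*}
\int_{t_0}^{t_1}\!\int_{V} |\bw|^p \, dx\, dt \le C \left( \sup_{t_0 < t < t_1} \int_{V} |\bw|^2 \, dx \right)^{\!2/n} \int_{t_0}^{t_1}\!\int_{V} |D\bw|^2 \, dx\, dt,
\end{equation*}
where $p = 2(n+2)/n$ for $n \ge 3$, $p = 6$ for $n = 1$, and $p$ is any finite exponent when $n = 2$. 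In every dimension $p > 2$, so it will suffice to show that appropriately chosen cut-offs of $\bv_t$ and of $D^2\bv$ satisfy the hypotheses of this embedding with the target exponent $p$.

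\textbf{Application to $\bv_t$.} Fix $V \subset\subset U$ and $[t_0,t_1] \subset (0,T)$, and choose $\eta \in C_c^\infty(U\times(0,T))$ with $\eta \equiv 1$ on $V \times [t_0,t_1]$. From \eqref{NaturalSpace} we already have $\bv_t \in L^\infty_{\text{loc}}((0,T); L^2_{\text{loc}}(U;\R^m))$ and $D\bv_t \in L^2_{\text{loc}}(U \times (0,T);\Mmn)$; consequently, after trivial extension by zero, the product $\eta\bv_t$ belongs to $L^\infty_t L^2_x \cap L^2_t H^1_{0,x}$ over a cylinder containing $\mathrm{supp}(\eta)$. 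Applying the embedding to $\eta\bv_t$ yields $\eta\bv_t \in L^p_{t,x}$, and hence $\bv_t \in L^p(V\times[t_0,t_1];\R^m)$, which gives \eqref{vtIntegrablePee}.

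\textbf{Application to $D^2\bv$.} By the bound \eqref{H2Boundv} of Proposition \ref{H2prop}(i), $D^2\bv \in L^\infty_{\text{loc}}((0,T); L^2_{\text{loc}}(U;{\cal S}^2(\R^n;\R^m)))$, and by \eqref{H3Boundv} of Proposition \ref{H2prop}(ii), $D^3\bv \in L^2_{\text{loc}}(U\times(0,T);{\cal S}^3(\R^n;\R^m))$. Consequently $\eta D^2\bv \in L^\infty_t L^2_x$, and since its spatial gradient $(D\eta)D^2\bv + \eta D^3\bv$ lies in $L^2_{t,x}$, we also have $\eta D^2\bv \in L^2_t H^1_{0,x}$. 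Applying the embedding once more gives $D^2\bv \in L^p_{\text{loc}}$, proving \eqref{extraIntegrablePee} with the same exponent $p$. The only subtlety in the argument is recognizing that the parabolic embedding demands an $L^\infty_t L^2_x$ bound together with an $L^2_{t,x}$ bound on the \emph{spatial} gradient only, with no time differentiability of $D^2\bv$ required — and this is precisely the integrability package already delivered by Proposition \ref{H2prop}; once that is granted, the self-improvement is immediate without recourse to Gehring's lemma.
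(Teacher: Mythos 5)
Your proof is correct and takes essentially the same route as the paper: the parabolic Gagliardo--Nirenberg interpolation of $L^\infty_t L^2_x \cap L^2_t H^1_x$ into $L^p_{t,x}$, applied componentwise to $\bv_t$ (using \eqref{NaturalSpace}) and to $D^2\bv$ (using Proposition \ref{H2prop}). The only slip is the claim that any finite exponent works when $n=2$ --- interpolation there only reaches exponents up to $4$ (the paper claims $q<4$) --- but since the corollary merely requires some $p>2$, this does not affect the argument.
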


\begin{proof}
By the interpolation of the Lebesgue spaces and the Gagilardo-Nirenberg-Sobolev inequality, we have the inclusion
$$
L^\infty_{\text{loc}}((0,T); L^2_\text{loc}(U))\cap L^2_{\text{loc}}((0,T); H^1_\text{loc}(U))\subset L^{2+\frac{4}{n}}_{\text{loc}}(U\times (0,T))
$$
for $n\ge 3$.  For $n=2$, we also have 
$$
L^\infty_{\text{loc}}((0,T); L^2_\text{loc}(U))\cap L^2_{\text{loc}}((0,T); H^1_\text{loc}(U))\subset L^{q}_{\text{loc}}(U\times (0,T))
$$
for each $1\le q<2+\frac{4}{n}=4$.  See Corollary 3.4 of \cite{Hynd} and Lemma 5.3 of \cite{DuzMinSte}.

\par For $n=1$, let us suppose 
$$
w\in L^\infty_{\text{loc}}((0,T); L^2_\text{loc}(U))\cap L^2_{\text{loc}}((0,T); H^1_\text{loc}(U)),
$$
$V\subset\subset U$ is an interval, and $[t_0,t_1]\in (0,T)$. We have
$$
w(x,t)=w(y,t)+\int^x_yw_{x}(z,t)dz
$$
for $x,y\in V$ and almost every $t\in [t_0,t_1]$. It then follows that
$$
w(x,t)^2\le 2\left(w(y,t)^2+|V|\int_Vw_x(z,t)^2dz\right).
$$
Integrating over $y\in V$ gives 
$$
|V|w(x,t)^2\le 2\left(\int_Vw(y,t)^2dy+|V|^2\int_Vw_{x}(z,t)^2dz\right).
$$

\par Therefore,  $w\in L^2([t_0,t_1]; L^\infty(V))$. Combining with the fact that $w\in L^{\infty}([t_0,t_1];L^2(V))$ gives
\begin{align*}
\int^{t_1}_{t_0}\int_Vw(x,t)^4dxdt&=\int^{t_1}_{t_0}\int_Vw(x,t)^2w(x,t)^2dxdt\\
&\le \int^{t_1}_{t_0}|w(\cdot,t)|_{L^\infty(V;\R^m)}^2 \left(\int_Vw(x,t)^2dx\right)dt\\
&\le \left(\int^{t_1}_{t_0}|w(\cdot,t)|_{L^\infty(V;\R^m)}^2dt\right) \left(\displaystyle\underset{t\in [t_0,t_1]}{\text{ess sup}}\int_Vw(x,t)^2dx\right)\\
&<\infty.
\end{align*}
Thus, $w\in L^4_{\text{loc}}(U\times(0,T))$.

\par So for every $n\ge 1$
$$
L^\infty_{\text{loc}}((0,T); L^2_\text{loc}(U))\cap L^2_{\text{loc}}((0,T); H^1_\text{loc}(U))\subset L^{p}_{\text{loc}}(U\times (0,T))
$$
for some $p>2$. The claim follows as our arguments apply to $w= v^i_t $ by \eqref{NaturalSpace} and to $w=v^i_{x_jx_k}$ by Proposition \ref{H2prop} for each $i=1,\dots, m$ and $j,k=1,\dots, n$. 
\end{proof}

Now we will show that the various identities and estimates we derived for smooth solutions actually hold for weak solutions. 
\begin{prop}
Assume $\bv$ is a weak solution of \eqref{mainPDE} in $U\times(0,T)$ and $\phi\in C^\infty_c(U\times (0,T))$. Then $(0,T)\mapsto \int_U F(D\bv(x,t))\phi(x,t) dx$ is  
absolutely continuous and \eqref{MainIdentity} holds for almost every $t\in (0,T)$. 
\end{prop}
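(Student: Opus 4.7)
The plan is to reduce to the smooth case by time mollification and then feed the resulting chain-rule identity into the weak form of the PDE. Fix an open $V\subset\subset U$ containing the spatial projection of $\mathrm{supp}\,\phi$, and a compact interval $[t_0,t_1]\subset(0,T)$ containing its temporal projection. Absolute continuity of $t\mapsto \int_U\phi(x,t)F(D\bv(x,t))\,dx$ on $[t_0,t_1]$ follows immediately: by Proposition \ref{WeakContProp} the path $t\mapsto D\bv(\cdot,t)$ is absolutely continuous into $L^2(V;\Mmn)$, while the pointwise Lipschitz bound $|F(M_1)-F(M_2)|\le\Lambda(|M_1|+|M_2|)|M_1-M_2|$ (immediate from $|DF(M)|\le\Lambda|M|$) combined with the $L^\infty_t L^2_x$ bound on $D\bv$ delivers a Cauchy--Schwarz estimate controlling the increments.

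To identify the derivative I would mollify in time. Let $\rho_\epsilon$ be a standard even mollifier on $\R$ supported in $(-\epsilon,\epsilon)$ and set $\bv^\epsilon(x,t):=\int\rho_\epsilon(t-\tau)\bv(x,\tau)\,d\tau$. Commuting the mollification with spatial derivatives and integrating by parts in $\tau$ using $\bv_t\in L^2_{\mathrm{loc}}$ gives, for small $\epsilon$ and $t$ in the interior of $[t_0,t_1]$, the relations $D\bv^\epsilon=\rho_\epsilon\ast_t D\bv$, $\bv^\epsilon_t=\rho_\epsilon\ast_t\bv_t$, and $D\bv^\epsilon_t=\rho_\epsilon\ast_t D\bv_t$, all converging to their unmollified counterparts in $L^2(V\times[t_0,t_1])$ as $\epsilon\to 0$. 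Since $\bv^\epsilon$ is smooth in $\tau$, the classical chain rule yields
\[
\frac{d}{d\tau}\int_U\phi F(D\bv^\epsilon)\,dx=\int_U\!\left[\phi_\tau F(D\bv^\epsilon)+\phi\,DF(D\bv^\epsilon)\cdot D\bv^\epsilon_\tau\right]dx.
\]
Integrating over $[s,t]\subset(t_0,t_1)$ and passing $\epsilon\to 0$ along a subsequence for which $D\bv^\epsilon\to D\bv$ pointwise a.e., the quadratic growth of $F$ and linear growth of $DF$ combined with Vitali's theorem (applied with the equi-integrable dominants $|D\bv^\epsilon|^2\to|D\bv|^2$ in $L^1$) give $L^1$ convergence of $F(D\bv^\epsilon)$ and $L^2$ convergence of $DF(D\bv^\epsilon)$. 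Using Proposition \ref{WeakContProp} to handle the boundary terms at $s,t$ then produces
\[
\int_U\phi F(D\bv)\,dx\bigg|_s^t=\int_s^t\!\!\int_U\!\left[\phi_\tau F(D\bv)+\phi\,DF(D\bv)\cdot D\bv_\tau\right]dx\,d\tau,
\]
so Lebesgue's differentiation theorem identifies the derivative at a.e.\ $t$.

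Finally, I would substitute the weak form of the PDE. For a.e.\ $t$, \eqref{NaturalSpace} combined with Fubini gives $\bv_t(\cdot,t)\in H^1(V;\R^m)$, so the compactly supported function $\bw:=\phi(\cdot,t)\bv_t(\cdot,t)$ lies in $H^1_0(U;\R^m)$ and is admissible in \eqref{WeakSolnCond}. Expanding $D\bw=\phi D\bv_t+\bv_t\otimes D\phi$ and solving for the cross term yields
\[
\int_U\phi\,DF(D\bv)\cdot D\bv_t\,dx=-\int_U\phi\,D\psi(\bv_t)\cdot\bv_t\,dx-\int_U\bv_t\cdot DF(D\bv)D\phi\,dx,
\]
and inserting this into the derivative formula rearranges exactly to \eqref{MainIdentity}. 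The main technical obstacle is the $L^2$ convergence of the nonlinear composition $DF(D\bv^\epsilon)\to DF(D\bv)$ in the middle step; once that is settled via the domination $|DF(M)|\le\Lambda|M|$ and Vitali's theorem, everything else reduces to algebraic manipulation of the identity and the weak form.
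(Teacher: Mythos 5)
Your argument is correct, but it follows a genuinely different route from the paper. The paper fixes a nonnegative spatial weight $u\in C^\infty_c(U)$, views $\bw\mapsto\int_V uF(D\bw)\,dx$ as a convex functional on $L^2(V;\R^m)$, identifies its subdifferential as $\{-\Div(uDF(D\bw))\}$ using the $H^2_{\text{loc}}$ estimate \eqref{H2Boundv}, and invokes the chain rule of Ambrosio--Gigli--Savar\'e to differentiate $t\mapsto\int uF(D\bv)\,dx$; it then removes the sign restriction on $u$ by a positive/negative decomposition plus mollification, and finally inserts the time-dependent $\phi$ through a difference-quotient argument. You instead mollify in time, apply the classical chain rule to the smooth-in-$\tau$ map $\tau\mapsto\int\phi F(D\bv^\epsilon)\,dx$, pass to the limit using the global Lipschitz bound $|DF(M_1)-DF(M_2)|\le\Lambda|M_1-M_2|$ and the quadratic growth of $F$, and then test \eqref{WeakSolnCond} directly with $\bw=\phi(\cdot,t)\bv_t(\cdot,t)\in H^1_0(U;\R^m)$, which is admissible for a.e.\ $t$ since $D\bv_t\in L^2_{\text{loc}}$. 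This buys you a more elementary and self-contained proof: you never need the $H^2_{\text{loc}}$ estimate or the AGS machinery (the weak form is already in divergence form, so no $\Div(uDF(D\bv))\in L^2$ is required), and no sign decomposition of the test function is needed. What the paper's route buys is a framework that transfers essentially verbatim to the second identity \eqref{SecondIdentity}, where the relevant functional lives on $H^{-1}(V;\R^m)$ and a direct time-mollification argument would be more delicate. Two small remarks: the appeal to Vitali's theorem is superfluous, since the Lipschitz continuity of $DF$ already gives $DF(D\bv^\epsilon)\to DF(D\bv)$ in $L^2$ and the local Lipschitz estimate on $F$ gives $F(D\bv^\epsilon)\to F(D\bv)$ in $L^1$; and when you substitute the $t$-dependent test function you should note that the a.e.-$t$ quantifier in \eqref{WeakSolnCond} is uniform over $\bw\in H^1_0(U;\R^m)$ (a single exceptional null set), which is exactly how the paper itself reads that condition in \eqref{MainEqnTimeSlice}.
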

\begin{proof}
1. Let $u\in C^\infty_c(U)$ be nonnegative and suppose $u$ is 
supported in an open set $V\subset\subset U$ with smooth boundary.  For $\bw\in L^2(V;\R^m)$, we define 
$$
\Phi(\bw):=
\begin{cases}
\int_V u F(D\bw)dx, \quad &\bw\in H^1(V;\R^m)\\
+\infty, \quad & \text{otherwise}.
\end{cases}
$$
Note that $\Phi$ is convex, lower-semicontinuous and proper. If $\bw\in H^2(V;\R^m)$, then it is routine to compute 
\begin{align*}
\partial\Phi(\bw)&:=\left\{ \xi\in L^2(V;\R^m): \Phi(\bu)\ge \Phi(\bw)+\int_V\xi\cdot (\bu-\bw)dx\;\; \text{all}\;\; \bu\in L^2(V;\R^m)\right\}\\
&=\{-\Div(uDF(D\bw))\}.
\end{align*}
In this case, we write 
$$
|\partial\Phi|(\bw):=\|-\Div(uDF(D\bw))\|_{L^2(V;\R^m)}.
$$

\par 2. Recall that a weak solution $\bv$ is absolutely continuous with values in $L^2(V;\R^m)$. By the $H^2_\text{loc}$ estimate \eqref{H2Boundv}, we have 
$$
|\partial\Phi|(\bv)\cdot \|\bv_t\|_{L^2(V;\R^m)}=\|-\Div(uDF(D\bv))\|_{L^2(V;\R^m)}\cdot\|\bv_t\|_{L^2(V;\R^m)}\in L^1_{\text{loc}}(0,T).
$$
In view of Proposition 1.4.4 and Remark 1.4.6 \cite{AGS}, $\Phi\circ\bv$ is locally absolutely continuous on $(0,T)$. By the chain rule (Remark 1.4.6 \cite{AGS}) and the weak solution condition \eqref{WeakSolnCond},
\begin{align*}
\frac{d}{dt}\Phi\circ\bv(\cdot,t)&=\frac{d}{dt}\int_V u(x) F(D\bv(x,t))dx\\
&=\frac{d}{dt}\int_U u(x) F(D\bv(x,t))dx\\
&=\int_U u(x) DF(D\bv(x,t))\cdot D\bv_t(x,t)dx\\
&= -\int_U\Div(u(x)DF(D\bv(x,t)))\cdot \bv_t(x,t)dx \\
&= -\int_U\left(DF(D\bv(x,t))Du(x)\cdot \bv_t(x,t)+u(x)D\psi(\bv_t(x,t))\cdot \bv_t(x,t)\right)dx
\end{align*}
for almost every $t\in (0,T)$.  As a result, 
\begin{align}\label{Identity1Unonneg}
&\int_U u(x) F(D\bv(x,t))dx=\int_U u(x) F(D\bv(x,s))dx\nonumber \\
&\hspace{1.5in} -\int^t_s\int_U(DF(D\bv)Du\cdot \bv_t+uD\psi(\bv_t)\cdot \bv_t)dxd\tau
\end{align}
for $0< s\le t<T$.

\par 3. Now suppose $u\in C^\infty_c(U)$ is not necessarily nonnegative.  Let $u^\epsilon=\eta^\epsilon*u$ denote the standard mollification of $u$ ($\epsilon>0)$. 
Recall that $\eta\in C^\infty_c(B_1(0))$ is a nonnegative, radial function that satisfies $\int_{B_1(0)}\eta(z)dz=1$ and $\eta^\epsilon:=\epsilon^{-n}\eta(\cdot/\epsilon)$. It is routine to check that for all $\epsilon>0$ sufficiently small, $u^\epsilon\in C^\infty_c(U)$ and that $u^\epsilon\rightarrow u$ in $C^1_\text{loc}(U)$ as $\epsilon\rightarrow 0^+$.  Decomposing $u$ into its positive and negative parts $u=u^+-u^-$, we have $u^\epsilon=(u^+)^\epsilon-(u^-)^\epsilon$. In particular, $(u^\pm)^\epsilon\in C^\infty_c(U)$ are both nonnegative. Therefore, \eqref{Identity1Unonneg} holds for 
$u=(u^\pm)^\epsilon$. Subtracting identity \eqref{Identity1Unonneg} with $u=(u^-)^\epsilon$ from the same identity \eqref{Identity1Unonneg} with $u=(u^+)^\epsilon$ gives 
\begin{align}\label{Identity1Unonneg}
&\int_U u^\epsilon(x) F(D\bv(x,t))dx=\int_U u^\epsilon(x) F(D\bv(x,s))dx\nonumber \\
&\hspace{1in} -\int^t_s\int_U(DF(D\bv)Du^\epsilon\cdot \bv_t+u^\epsilon D\psi(\bv_t)\cdot \bv_t)dxd\tau.\quad
\end{align}
Sending $\epsilon\rightarrow 0^+$ allows us to conclude \eqref{Identity1Unonneg} without any sign restriction on $u$. 

\par 4. Let us define $f(t):=  \int_U F(D\bv(x,t))\phi(x,t) dx$ and suppose $h\neq 0$. Note 
\begin{align*}
\frac{f(t+h)-f(t)}{h}&=
\int_U\frac{F(D\bv(x,t+h))\phi(x,t+h)-F(D\bv(x,t))\phi(x,t)}{h}dx\\
&=\int_U\phi(x,t)\left[\frac{F(D\bv(x,t+h))-F(D\bv(x,t))}{h}\right]dx \\
&\hspace{.5in}+ \int_U F(D\bv(x,t+h))\left[\frac{\phi(x,t+h)-\phi(x,t)}{h} \right]dx.
\end{align*}
By parts 2 and 3 above,
\begin{align*}
\lim_{h\rightarrow 0}\int_U\phi(x,t)\left[\frac{F(D\bv(x,t+h))-F(D\bv(x,t))}{h}\right]dx  = 
\hspace{1in} \\
-\int_U\left(DF(D\bv(x,t))D\phi(x,t)\cdot \bv_t(x,t)+\phi(x,t) D\psi(\bv_t(x,t))\cdot \bv_t(x,t)\right)dx
\end{align*}
for almost every $t\in (0,T)$.  In view of the continuity of $D\bv$ (as detailed in Proposition \ref{WeakContProp}), we also have
\begin{align*}
\lim_{h\rightarrow 0}\int_U F(D\bv(x,t+h))\left[\frac{\phi(x,t+h)-\phi(x,t)}{h} \right]dx=\int_U F(D\bv(x,t))\phi_t(x,t)dx
\end{align*}
for every $t\in (0,T)$.  Combining these limits completes a proof that \eqref{MainIdentity} holds for almost every $t\in (0,T)$. Finally, we note that if \eqref{MainIdentity} holds then $f$ is absolutely continuous as each term in \eqref{MainIdentity} aside from the time derivative belongs to $L^1[0,T]$.
\end{proof}
\begin{cor}
Every weak solution of \eqref{mainPDE} on $U\times (0,T)$ satisfies the local energy estimate \eqref{EnergyBound1}.
\end{cor}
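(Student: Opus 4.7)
My plan is to replicate the integration-by-parts computation of Corollary \ref{CorBound1} verbatim, using the identity \eqref{MainIdentity} which, by the preceding proposition, is now available for weak solutions. Choose $\phi=\eta^2$, where $\eta\in C^\infty_c(U\times(0,T))$ and $\eta\ge 0$. Pick $s\in(0,T)$ small enough that $\eta(\cdot,s)\equiv 0$ (this is possible because $\eta$ has compact support in the time variable). Since the preceding proposition shows that $t\mapsto\int_U\eta(x,t)^2 F(D\bv(x,t))\,dx$ is absolutely continuous on $(0,T)$ and that \eqref{MainIdentity} holds a.e., the fundamental theorem of calculus yields
\begin{equation*}
\int_U\eta(x,t)^2 F(D\bv(x,t))\,dx+\int_s^t\!\!\int_U\eta^2 D\psi(\bv_\tau)\cdot\bv_\tau\,dxd\tau
=\int_s^t\!\!\int_U\left(F(D\bv)2\eta\eta_\tau-\bv_\tau\cdot DF(D\bv)2\eta D\eta\right)dxd\tau
\end{equation*}
for every $t\in (s,T)$. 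Because $\eta(\cdot,s)\equiv0$, this is effectively integration from $0$ to $t$.

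Next, I would apply exactly the lower bounds \eqref{MoreEstPsi} and \eqref{MoreEstF} to the left-hand side, giving $\tfrac{\lambda}{2}\int_U\eta^2|D\bv(\cdot,t)|^2\,dx+\theta\int_0^t\int_U\eta^2|\bv_\tau|^2$. On the right-hand side, apply the upper bounds in \eqref{MoreEstF} to $F(D\bv)$ and $|DF(D\bv)|$, then split the mixed term via the elementary inequality $2\eta|\bv_t||D\bv||D\eta|\le\varepsilon\eta^2|\bv_t|^2+\varepsilon^{-1}|D\bv|^2|D\eta|^2$ with $\varepsilon=\theta/(2\Lambda)$, so that the $\eta^2|\bv_t|^2$ contribution on the right can be absorbed into the corresponding term on the left. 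The resulting inequality reads
\begin{equation*}
\frac{\lambda}{2}\int_U\eta(\cdot,t)^2|D\bv(\cdot,t)|^2\,dx+\frac{\theta}{2}\int_0^t\!\!\int_U\eta^2|\bv_\tau|^2\,dxd\tau
\le C\int_0^T\!\!\int_U(\eta|\eta_\tau|+|D\eta|^2)|D\bv|^2\,dxd\tau,
\end{equation*}
with $C$ depending only on $\theta,\lambda,\Theta,\Lambda$, precisely as in the smooth case. Taking the supremum over $t\in(0,T)$ on the left (using continuity of the first term from the preceding proposition, so $\esssup=\sup$) and noting that extending to $t=0$ and $t=T$ is immediate because $\eta$ vanishes there, yields \eqref{EnergyBound1}.

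I do not anticipate a serious obstacle: the only points that require some care are ensuring that the integrability \eqref{NaturalSpace} makes every integrand above belong to $L^1_{\text{loc}}$ (so that Fubini and the absorption step are legal), and confirming that $\phi=\eta^2\in C^\infty_c(U\times(0,T))$ is a legitimate test function in the preceding proposition — both of which are immediate. The clever computation has already been done in Corollary \ref{CorBound1}; the real content here is merely that the identity transferred from smooth to weak solutions without loss, which is exactly what the preceding proposition delivered.
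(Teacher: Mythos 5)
Your proposal is correct and is precisely the argument the paper intends: the corollary is stated without proof because it follows by repeating the computation of Corollary \ref{CorBound1} with $\phi=\eta^2$, now justified for weak solutions by the absolute continuity of $t\mapsto\int_U\phi F(D\bv)\,dx$ and the a.e.\ validity of \eqref{MainIdentity} established in the preceding proposition. Your handling of the integration from a time $s$ where $\eta$ vanishes, the absorption of the mixed term via Young's inequality with $\varepsilon=\theta/(2\Lambda)$, and the passage to the supremum using the continuity of $D\bv$ in $L^2_{\mathrm{loc}}$ (Proposition \ref{WeakContProp}) matches the smooth-case computation and transfers without loss.
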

Let us now proceed to establishing the identity \eqref{SecondIdentity} for weak solutions. This identity combined with the local boundedness of $\bv_t: (0,T)\rightarrow L^2(V;\R^m)$ will actually allow us to verify that $D\psi(\bv_t)$ is strongly continuous with values in $L^2(V;\R^m)$. We also remind the reader that $\psi^*$ is the Legendre transform of $\psi$. 

\begin{prop}
Assume $\bv$ is a weak solution of \eqref{mainPDE} in $U\times(0,T)$ and $\phi\in C^\infty_c(U\times (0,T))$. Then $(0,T)\mapsto \int_U \psi^*(D\psi(\bv_t(x,t)))\phi(x,t) dx$ is locally absolutely continuous and \eqref{SecondIdentity} holds for almost every $t\in (0,T)$. 
\end{prop}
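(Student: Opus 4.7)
The plan is to follow the same AGS-based strategy as the proof of the previous proposition, now with the curve of interest being $t \mapsto \mathbf{p}(\cdot,t) := D\psi(\bv_t(\cdot,t))$ rather than $\bv$ itself, and with the functional involving $\psi^*$ in place of $F$. Fix a nonnegative $u \in C^\infty_c(U)$ supported in some $V \subset\subset U$ with smooth boundary, and introduce
$$
\Psi(\mathbf{q}) := \int_V u \, \psi^*(\mathbf{q}) \, dx, \qquad \mathbf{q} \in L^2(V;\R^m),
$$
which is convex, lower-semicontinuous and proper. Since \eqref{UnifConv} implies that $D^2\psi^*$ is uniformly bounded above and below by positive constants, $\Psi$ is everywhere finite and Gateaux differentiable on $L^2(V;\R^m)$ with $\partial\Psi(\mathbf{q}) = \{u D\psi^*(\mathbf{q})\}$, so the local slope is $|\partial\Psi|(\mathbf{q}) = \|u D\psi^*(\mathbf{q})\|_{L^2(V;\R^m)}$. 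At $\mathbf{q} = \mathbf{p}(\cdot,t)$ this slope equals $\|u\bv_t(\cdot,t)\|_{L^2(V;\R^m)}$, which belongs to $L^\infty_{\text{loc}}(0,T)$ by \eqref{NaturalSpace}.

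Next I would establish that $\mathbf{p}$ is locally absolutely continuous as an $L^2(V;\R^m)$-valued curve with a weak time derivative that pairs against $u\bv_t$. By \eqref{MainEqnTimeSlice} and Proposition \ref{H2prop}, $\mathbf{p}(\cdot,t) = \Div DF(D\bv(\cdot,t))$ lies in $L^\infty_{\text{loc}}((0,T); L^2(V;\R^m))$. Taking a difference quotient in $t$ of \eqref{WeakSolnCond} and invoking \eqref{UnifConv2} yields, for each $\bw \in H^1_0(V;\R^m)$,
$$
\int_V \big(\mathbf{p}(\cdot,t) - \mathbf{p}(\cdot,s)\big) \cdot \bw \, dx = -\int_s^t\!\int_V D^2F(D\bv) \, D\bv_\tau \cdot D\bw \, dx \, d\tau,
$$
so $\partial_t \mathbf{p} \in L^2_{\text{loc}}((0,T); H^{-1}(V;\R^m))$ and coincides distributionally with $\Div(D^2F(D\bv) D\bv_t)$. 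Since $u$ is compactly supported in $V$ and $D\bv_t \in L^2_{\text{loc}}$, the test function $u\bv_t$ belongs to $L^2_{\text{loc}}((0,T); H^1_0(V;\R^m))$, making the duality pairing $\langle \partial_t \mathbf{p}, u\bv_t \rangle_{H^{-1}, H^1_0}$ well defined. A Gelfand-triple chain rule (cf.\ Proposition 1.4.4 and Remark 1.4.6 of \cite{AGS}) then gives that $\Psi \circ \mathbf{p}$ is locally absolutely continuous with
$$
\tfrac{d}{dt} \Psi(\mathbf{p}) \;=\; \langle \partial_t \mathbf{p}, u D\psi^*(\mathbf{p}) \rangle \;=\; \langle \partial_t \mathbf{p}, u\bv_t \rangle.
$$
Unpacking this pairing via the difference-quotient identity and integrating by parts on $V$ produces exactly $-\int_V u\, D^2F(D\bv)(D\bv_t, D\bv_t)\, dx - \int_V \bv_t \cdot D^2F(D\bv) D\bv_t \, Du \, dx$, which is \eqref{SecondIdentity} with test function $u$ and no $\phi_t$ contribution.

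To remove the sign restriction on $u$, decompose a general $u \in C^\infty_c(U)$ as $u = u^+ - u^-$, mollify, apply the preceding identity to each piece, and subtract; the required convergences are exactly as in steps 3--4 of the previous proof, using the uniform $L^2$ bounds on $\mathbf{p}$ and $\bv_t$. To include a genuinely time-dependent test function $\phi(x,t)$, I would then mimic step 4 of that proof: split the difference quotient of $\int_U \phi\, \psi^*(D\psi(\bv_t))\, dx$ into a piece with $\phi$ frozen in time (handled by what we just proved) and a piece with $\psi^*(D\psi(\bv_t))$ frozen in $t$ and $\phi$ differenced, the latter converging to $\int_U \phi_t\, \psi^*(D\psi(\bv_t))\, dx$ by the continuity of $t \mapsto \mathbf{p}(\cdot,t)$ into $L^2_{\text{loc}}(U;\R^m)$, a byproduct of the AC established above. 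Local absolute continuity of the full quantity then follows because every term on the right-hand side of \eqref{SecondIdentity} lies in $L^1_{\text{loc}}(0,T)$ by \eqref{NaturalSpace} and \eqref{UnifConv2}. The hard part, analogous to the role Proposition \ref{H2prop} plays in the proof of \eqref{MainIdentity}, is rigorously justifying this chain rule along $\mathbf{p}$ given that $\partial_t \mathbf{p}$ is only in $H^{-1}(V;\R^m)$ rather than $L^2(V;\R^m)$; this is where the Gelfand-triple structure and the $H^1_0$ regularity of $u\bv_t$ coming from \eqref{NaturalSpace} are crucial.
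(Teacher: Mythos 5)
Your overall strategy---the AGS chain rule applied to the curve $t\mapsto D\psi(\bv_t(\cdot,t))$ with the functional $\int_V u\,\psi^*$, then removal of the sign restriction and a difference-quotient argument for the time-dependent $\phi$---is the same as the paper's, but two steps contain genuine gaps. First, you define $\Psi$ on $L^2(V;\R^m)$, so your slope $|\partial\Psi|(\mathbf{q})=\|uD\psi^*(\mathbf{q})\|_{L^2(V;\R^m)}$ is the slope with respect to the $L^2$ metric; but the curve $\mathbf{p}=D\psi(\bv_t)$ is only known to be absolutely continuous with values in $H^{-1}(V;\R^m)$ (Proposition \ref{WeakContProp}), not in $L^2$, so Proposition 1.4.4 and Remark 1.4.6 of \cite{AGS} do not apply in your setting, and your closing appeal to a ``Gelfand-triple chain rule'' is precisely the point that needs proof. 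The paper's resolution is to pose $\Psi$ on the Hilbert space $H^{-1}(V;\R^m)$ (setting it $+\infty$ off $L^2$), so that $\partial\Psi(\bw)=\{uD\psi^*(\bw)\}\subset H^1_0(V;\R^m)$, $|\partial\Psi|(\bw)=\|uD\psi^*(\bw)\|_{H^1_0(V;\R^m)}$, and the product $|\partial\Psi|(D\psi(\bv_t))\cdot\|\partial_t D\psi(\bv_t)\|_{H^{-1}(V;\R^m)}\le \Lambda\|u\bv_t\|_{H^1_0(V;\R^m)}\|D\bv_t\|_{L^2(V;\Mmn)}\in L^1_{\text{loc}}(0,T)$; then the AGS chain rule applies verbatim in $H^{-1}$ and yields \eqref{SecondIdentitywithU}. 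Note also that your integrability check $\|u\bv_t\|_{L^2}\in L^\infty_{\text{loc}}$ pairs the $L^2$ slope with a metric derivative of $\mathbf{p}$ in $L^2$ that you have not shown to be finite.

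Second, and more seriously, in your final step you pass to the limit in the $\phi$-differenced term using ``continuity of $t\mapsto\mathbf{p}(\cdot,t)$ into $L^2_{\text{loc}}$, a byproduct of the AC established above.'' It is not a byproduct: absolute continuity into $H^{-1}(V;\R^m)$, or of the scalar quantity $\Psi\circ\mathbf{p}$, does not give strong $L^2(V)$ continuity of $\mathbf{p}$, and weak $L^2$ continuity would not suffice to conclude $\int_U\psi^*(\mathbf{p}(\cdot,t+h))\,g\,dx\to\int_U\psi^*(\mathbf{p}(\cdot,t))\,g\,dx$ because $\psi^*$ has quadratic growth. This is exactly where the paper does most of its work: it first uses \eqref{SecondIdentitywithU} to obtain a pointwise-in-time bound on $\|\bv_t(\cdot,t)\|_{L^2(V)}$ over $[t_0,t_1]$ (the essential supremum bound in \eqref{NaturalSpace} is not pointwise), deduces weak $L^2$ continuity of $D\psi(\bv_t)$ from this bound together with the $H^{-1}$ continuity, and then upgrades to strong $L^2$ continuity by combining the uniform convexity of $\psi^*$ with \eqref{SecondIdentitywithU}. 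Without supplying this argument (or an alternative, e.g.\ verifying the integrated form of \eqref{SecondIdentity} directly at almost every pair of times), your analogue of step 4 does not go through.
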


\begin{proof}
1. Suppose $u\in C^\infty_c(U)$ is nonnegative and choose an open $V\subset\subset U$ such that $u$ is supported in $V$. Let us also define 
$$
\Psi(\bw):=
\begin{cases}
\int_Vu\psi^*(\bw)dx,\quad &\bw\in L^2(V;\R^m)\\
+\infty,\quad & \text{otherwise}
\end{cases}
$$
for each $\bw\in H^{-1}(V;\R^m)$.  Observe that $\Psi$ is convex, lower-semicontinuous and proper. A routine computation shows that if $D\psi^*(\bw)\in H^1(V;\R^m)$ then 
\begin{align*}
\partial\Psi(\bw):&=\left\{ \xi\in H^1_0(V;\R^m): \Psi(\bu)\ge \Psi(\bw)+\langle\xi,\bu-\bw\rangle\;\; \text{all}\;\; \bu\in H^{-1}(V;\R^m)\right\}\\
&=\{uD\psi^*(\bw)\},
\end{align*}
where $\langle,\rangle$ is the pairing between $H^1_0(V;\R^m)$ and $H^{-1}(V;\R^m)$.  In this case, we will write 
\be\label{PsiSlopeForm}
|\partial\Psi|(\bw):=\|uD\psi^*(\bw)\|_{H^1_0(V;\R^m)}.
\ee

\par 2. By Proposition \ref{WeakContProp}, $D\psi(\bv_t)\in AC^2([t_0,t_1]; H^{-1}(V;\R^m))$.  We also have by the weak solution condition \eqref{WeakSolnCond}
that for every $\bu\in H^1_0(V;\R^m)$ and almost every $t\in (0,T)$
\begin{align*}
\left\langle \bu, \partial_tD\psi(\bv_t(\cdot, t))\right\rangle &= \frac{d}{dt}\langle \bu, D\psi(\bv_t(\cdot, t))\rangle\\
&= \frac{d}{dt}\int_V D\psi(\bv_t(x, t))\cdot \bu(x) dx\\
&= \frac{d}{dt}\int_U D\psi(\bv_t(x, t))\cdot \bu(x) dx\\
&= -\frac{d}{dt}\int_U DF(D\bv(x, t))\cdot D\bu(x) dx\\
&= -\int_U D^2F(D\bv(x, t))(D\bv_t(x, t), D\bu(x)) dx.
\end{align*}
The last equality above can be justified by employing the Lipschitz continuity of $DF$ and using that $D\bv: (0,T)\rightarrow L^2(V;\Mmn)$ is locally absolutely continuous; we leave the details to the reader.   

\par It follows from this computation (and also from inequality \eqref{DpsiDvtSlope}) that  $\|\partial_t D\psi(\bv_t)\|_{H^{-1}(V;\R^m)}\le \Lambda \|D\bv\|_{L^2(V;\Mmn)}$. Combining with \eqref{PsiSlopeForm} gives
$$
|\partial\Psi|(D\psi(\bv_t))\cdot\|\partial_t D\psi(\bv_t)\|_{H^{-1}(V;\R^m)}\le \Lambda\|u\bv_t\|_{H^1_0(V;\R^m)} \cdot \|D\bv_t\|_{L^2(V;\Mmn)}\in L^1_{\text{loc}}(0,T).
$$
Consequently, $\Psi\circ (D\psi(\bv_t))$ is absolutely continuous (Remark 1.4.6 of \cite{AGS}); and by the chain rule,
\begin{align*}
\frac{d}{dt}\Psi(D\psi(\bv_t(\cdot,t)))&=\frac{d}{dt}\int_Vu(x)\psi^*\left(D\psi(\bv_t(x,t))\right)dx\\
&=\frac{d}{dt}\int_Uu(x)\psi^*\left(D\psi(\bv_t(x,t))\right)dx\\
&=\left\langle u\bv_t (\cdot,t), \partial_tD\psi(\bv_t(\cdot, t))\right\rangle\\
&= -\int_U D^2F(D\bv(x, t))(D\bv_t(x, t), D\left(u(x)\bv_t (x,t)\right)) dx\\
&=-\int_U u(x)D^2F(D\bv(x, t))(D\bv_t(x, t), D\bv_t (x,t))dx \\
&\quad - \int_U\bv_t(x,t)\cdot D^2F(D\bv(x, t))D\bv_t(x, t)Du(x)dx.
\end{align*}
In summary, for nonnegative $u\in C^\infty_c(U)$, we have 
\begin{align}\label{SecondIdentitywithU}
\frac{d}{dt}\int_Uu(x)\psi^*\left(D\psi(\bv_t(x,t))\right)dx&=-\int_U u(x)D^2F(D\bv(x, t))(D\bv_t(x, t), D\bv_t (x,t))dx \nonumber \\
&\quad - \int_U\bv_t(x,t)\cdot D^2F(D\bv(x, t))D\bv_t(x, t)Du(x)dx.\quad 
\end{align}

\par 3. We can establish formula \eqref{SecondIdentitywithU} for any $u\in C^\infty_c(U)$ without sign restriction by arguing similar to how 
we did in part 3 of the previous proposition.  We may also complete this proof as we did in part 4 of the previous proposition, provided we verify that $D\psi(\bv_t): [t_0,t_1]\rightarrow L^2(V;\R^m)$ is continuous for any open $V\subset\subset U$ and $[t_0,t_1]\subset (0,T)$. We will first show that  $D\psi(\bv_t): [t_0,t_1]\rightarrow L^2(V;\R^m)$ is weakly continuous. 

\par To this end, let us choose an open set $W$ such that $V\subset\subset W\subset\subset U$.  Now select $u\in C^\infty_c(U)$ with $0\le u\le 1$, $u\equiv 1$ on $V$ and $u$ supported in $W$.  Let us also select a time $\tau\in [t_0,t_1]$ for which $\bv_t(\cdot, \tau)\in L^2(W;\R^m)$; such a time $\tau$ exists as $\bv_t\in L^2([t_0,t_1];L^2(W,\R^m)).$ By \eqref{MoreEstPsi} and \eqref{SecondIdentitywithU}, 
\begin{align*}
\frac{\theta}{2}\int_V|\bv_t(x,t)|^2dx&\le\int_V\psi^*\left(D\psi(\bv_t(x,t))\right)dx\\
&\le\int_Wu(x)\psi^*\left(D\psi(\bv_t(x,t))\right)dx\\
&=\int_Wu(x)\psi^*\left(D\psi(\bv_t(x,\tau))\right)dx +\\
&\quad -\int^t_\tau\int_W (uD^2F(D\bv)(D\bv_t, D\bv_t) +\bv_t\cdot D^2F(D\bv)D\bv_tDu)dxds\\
&\le \frac{\Theta}{2}\int_W|\bv_t(x,\tau)|^2dx+\Lambda\int^{t_1}_{t_0}\int_{W}\left( |D\bv_t|^2+|\bv_t||D\bv_t||Du|\right)dxds
\end{align*}
for each $t\in [\tau,t_1]$. We can derive a similar estimate for $t\in [t_0,\tau]$, and in view of \eqref{NaturalSpace}, we may conclude that $\int_V|\bv_t(x,t)|^2dx$ is uniformly bounded in $t\in [t_0,t_1]$.

\par  Recall  that $D\psi(\bv_t):[t_0,t_1]\rightarrow H^{-1}(V;\R^m)$ is continuous by Proposition \ref{WeakContProp}. Assume $(s_k)_{k\in \N}\subset [t_0,t_1]$ and $s_k\rightarrow t\in [t_0,t_1]$ as $k\rightarrow\infty$. As $|D\psi(\bv_t(\cdot,s_k))|\le \Theta |\bv_t(\cdot,s_k)|$, 
$\{D\psi(\bv_t(\cdot,s_k))\}_{k\in \N}\subset L^2(V;\R^m)$ is bounded. So there is a subsequence  $\{D\psi(\bv_t(\cdot,s_{k_j}))\}_{j\in \N}$ that converges weakly
to some $\bu$ in $L^2(V;\R^m)$. Since  $D\psi(\bv_t(\cdot,s_k))\rightarrow D\psi(\bv_t(\cdot,t))$ in $H^{-1}(V;\R^m)$, it must be that $\bu=D\psi(\bv_t(\cdot,t))$. And 
as this limit is independent of the subsequence, $D\psi(\bv_t(\cdot,s_k))\rightharpoonup D\psi(\bv_t(\cdot,t))$ in $L^{2}(V;\R^m)$. Clearly this argument extends to any bounded domain within $U$, 
so we actually have that $D\psi(\bv_t) : [t_0,t_1]\rightarrow L^2(W;\R^m)$ is weakly continuous. 

\par  By the uniform convexity of $\psi^*$, 
\begin{align*}
\int_W u(x)\psi^*(D\psi(\bv_t(x,s)))dx &\ge \int_W u(x)\psi^*(D\psi(\bv_t(x,t)))dx \\
&\quad+\int_W u(x)\bv_t(x,t)\cdot (D\psi(\bv_t(x,s))-D\psi(\bv_t(x,t)))dx\\
&\quad+\frac{1}{2\Theta}\int_V\left|D\psi(\bv_t(x,s))-D\psi(\bv_t(x,t))\right|^2dx
\end{align*}
for each $t,s\in [t_0,t_1]$. We can then use \eqref{SecondIdentitywithU} and the weak continuity of $D\psi(\bv_t)$ to get 
$$
\lim_{s\rightarrow t}\int_V\left|D\psi(\bv_t(x,s))-D\psi(\bv_t(x,t))\right|^2dx=0.
$$
As a result, we can now proceed as we did in part 4 of the previous proposition to verify $(0,T)\mapsto \int_U \psi^*(D\psi(\bv_t(x,t)))\phi(x,t) dx$ is locally absolutely continuous and \eqref{SecondIdentity} holds for almost every $t\in (0,T)$.  We leave the details to the reader. 
\end{proof}

\begin{cor}
Every weak solution of \eqref{mainPDE} on $U\times (0,T)$ satisfies the local energy estimate \eqref{EnergyBound2}.
\end{cor}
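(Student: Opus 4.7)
The plan is to mirror the proof of the smooth-case corollary following the second proposition, now exploiting the identity \eqref{SecondIdentity} recently established for weak solutions. Since $\eta\in C^\infty_c(U\times(0,T))$ with $\eta\ge 0$ implies $\phi:=\eta^2\in C^\infty_c(U\times(0,T))$, and since the previous proposition guarantees that $t\mapsto \int_U \eta^2\psi^*(D\psi(\bv_t))\,dx$ is locally absolutely continuous with \eqref{SecondIdentity} holding for almost every $t$, integrating \eqref{SecondIdentity} in time is legitimate.

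Concretely, I would pick $t_1\in (0,T)$ small enough that $\eta(\cdot,s)\equiv 0$ for $s\le t_1$ (possible as $\eta$ has compact time support in $(0,T)$), then integrate \eqref{SecondIdentity} with $\phi=\eta^2$ over $[t_1,t]$. On the left, the Legendre duality $\psi^*(D\psi(w))=D\psi(w)\cdot w-\psi(w)$ together with \eqref{MoreEstPsi} yields $\psi^*(D\psi(w))\ge \tfrac{\theta}{2}|w|^2$, while \eqref{UnifConv3} gives $D^2F(D\bv)(D\bv_t,D\bv_t)\ge\lambda|D\bv_t|^2$. On the right, \eqref{MoreEstPsi} bounds the first term by $\Theta|\bv_t|^2\eta|\eta_t|$; for the second term, the Cauchy--Schwarz inequality applied to the positive-definite bilinear form $D^2F(D\bv)$ (via \eqref{UnifConv3}) produces the bound $\Lambda|D\bv_t|\,|\bv_t|\,|D\eta|$, exactly as in the smooth-case derivation.

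The remaining step is a Young-inequality absorption: writing $2\Lambda\,\eta|D\bv_t|\cdot|\bv_t||D\eta|\le\tfrac{\lambda}{2}\eta^2|D\bv_t|^2+\tfrac{2\Lambda^2}{\lambda}|D\eta|^2|\bv_t|^2$ moves the $|D\bv_t|^2$ contribution to the left side. Taking the supremum over $t\in[0,T]$ on the first left-hand term (both sides vanish for $t\le t_1$) and letting $t\to T$ on the second yields \eqref{EnergyBound2} with $C=(\Theta+2\Lambda^2/\lambda)/(\tfrac{1}{2}\min\{\theta,\lambda\})$. The maximum in \eqref{EnergyBound2} is genuinely attained, since the previous proposition established strong $L^2(V;\R^m)$-continuity of $D\psi(\bv_t)$, and hence of $\bv_t$ by uniform convexity of $\psi$. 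I do not foresee a significant obstacle; the analytic heavy lifting --- establishing \eqref{SecondIdentity} for weak solutions --- is already in hand, and the remainder is the same Cauchy--Schwarz-and-absorb calculation that appeared in the smooth case.
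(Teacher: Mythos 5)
Your proposal is correct and follows exactly the route the paper intends: the corollary is an immediate consequence of the proposition establishing \eqref{SecondIdentity} for weak solutions, after which one repeats verbatim the smooth-case computation (choose $\phi=\eta^2$, use \eqref{MoreEstPsi} and \eqref{UnifConv3}, Cauchy--Schwarz for the form $D^2F(D\bv)$, and Young's inequality to absorb the $|D\bv_t|^2$ term), yielding the same constant $C=\bigl(\Theta+2\Lambda^2/\lambda\bigr)/\bigl(\tfrac{1}{2}\min\{\theta,\lambda\}\bigr)$. Your remarks on integrating the identity over the compact time support of $\eta$ and on the attainment of the maximum via the $L^2_{\mathrm{loc}}$-continuity of $\bv_t$ are consistent with what the paper establishes.
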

\begin{cor}
Assume $\bv$ is a weak solution of \eqref{mainPDE} in $U\times(0,T)$. Suppose $[t_0,t_1]\subset (0,T)$ and $V\subset\subset U$ is open. Then 
 $\bv_t: [t_0,t_1]\rightarrow L^{2}(V;\R^m)$ is continuous. 
\end{cor}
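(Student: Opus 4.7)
The plan is to deduce this corollary directly from the $L^2$-continuity of $D\psi(\bv_t)$ that was obtained at the very end of the proof of the preceding proposition, by inverting $D\psi$ pointwise using its uniform monotonicity.

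More concretely, the left inequality of \eqref{UnifConv} together with the Cauchy--Schwarz inequality yields
\[
\theta |w_1-w_2|^2 \le \bigl(D\psi(w_1)-D\psi(w_2)\bigr)\cdot(w_1-w_2) \le |D\psi(w_1)-D\psi(w_2)|\,|w_1-w_2|,
\]
so $|w_1-w_2| \le \theta^{-1}|D\psi(w_1)-D\psi(w_2)|$ for every $w_1,w_2 \in \R^m$. Applying this with $w_1 = \bv_t(x,s)$ and $w_2 = \bv_t(x,t)$, squaring, and integrating over $V$ gives
\[
\int_V |\bv_t(x,s)-\bv_t(x,t)|^2\,dx \;\le\; \frac{1}{\theta^2}\int_V |D\psi(\bv_t(x,s))-D\psi(\bv_t(x,t))|^2\,dx.
\]
The right-hand side tends to $0$ as $s\to t$ for each $t\in[t_0,t_1]$ by the $L^2(V;\R^m)$-continuity of $D\psi(\bv_t)$ established at the end of the previous proof. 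Hence $\bv_t:[t_0,t_1]\to L^2(V;\R^m)$ is continuous.

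There is no real obstacle; the only subtlety is simply recognizing that the uniform monotonicity of $D\psi$ lets one trade an $L^2$-bound on $D\psi(\bv_t(\cdot,s))-D\psi(\bv_t(\cdot,t))$ for an $L^2$-bound on $\bv_t(\cdot,s)-\bv_t(\cdot,t)$, so that the strong $L^2$-continuity transfers from $D\psi(\bv_t)$ to $\bv_t$ itself.
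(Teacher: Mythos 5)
Your proposal is correct and is essentially identical to the paper's own proof: both deduce the pointwise bound $|w_1-w_2|\le \theta^{-1}|D\psi(w_1)-D\psi(w_2)|$ from the uniform monotonicity of $D\psi$, integrate it over $V$, and invoke the strong $L^2(V;\R^m)$-continuity of $D\psi(\bv_t)$ established at the end of the preceding proposition. The only (cosmetic) difference is that the paper cites \eqref{UnifConv2} where the relevant hypothesis is really \eqref{UnifConv}, which is the inequality you correctly use.
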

\begin{proof}
In part 3 of the previous proposition, we established  that $D\psi(\bv_t): [t_0,t_1]\rightarrow L^{2}(V;\R^m)$ is continuous. In view of \eqref{UnifConv2}, 
$$
\int_V|\bv_t(x,t)-\bv_t(x,s)|^2dx\le \frac{1}{\theta^2}\int_V|D\psi(\bv_t(x,t))-D\psi(\bv_t(x,s))|^2dx.
$$
As a result, $\bv_t: [t_0,t_1]\rightarrow L^{2}(V;\R^m)$ is necessarily continuous. 
\end{proof}

\section{Fractional time differentiability}
We seek to strengthen our integrability and continuity assertions obtained in the previous section. In particular, 
we will derive some averaged continuity estimates for $\bv_t: [t_0,t_1]\rightarrow L^2(V;\R^m)$ and 
$D^2\bv: [t_0,t_1]\rightarrow L^2(V;{\cal S}^2(\R^n,\R^m))$, where $[t_0,t_1]\subset (0,T)$ and $V\subset\subset U$. As we shall see, these estimates imply a certain fractional time differentiability of these mappings.  As an application, we will use these estimates to 
derive compactness properties of solutions which play a crucial role in our proof of Theorem \ref{mainThm}. 

\begin{prop}
Assume $\bv$ is a weak solution of \eqref{mainPDE} in $U\times (0,T)$, and let $p>2$ be the exponent in Corollary \ref{PeeCorollary}.  For each open $V\subset\subset U$ and $[t_0,t_1]\in (0,T)$, there is a constant $C$ such that 
\begin{equation}\label{FractionVtDeriv}
\int^{t_1}_{t_0}\int_V|\bv_t(x,t+h)-\bv_t(x,t)|^2dxdt\le C |h|^{\frac{1}{2}-\frac{1}{p}}
\end{equation} 
for $0<|h|<\frac{1}{2}\min\{1,t_0,T-t_1\}$. 
\end{prop}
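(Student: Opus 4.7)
The strategy is to test the difference of the weak equation at times $t+h$ and $t$ against a suitably localized version of $\bw_h(\cdot,t):=\bv_t(\cdot,t+h)-\bv_t(\cdot,t)$ itself, then exploit the uniform convexity of $\psi$ to extract $\int\eta^2|\bw_h|^2$ on the left-hand side. Concretely, I would choose open sets $V\subset\subset W\subset\subset U$ and a cutoff $\eta\in C^\infty_c(U)$ with $\eta\equiv 1$ on $V$ and with the support of $\eta$ contained in $W$, take $[t_0',t_1']\subset(0,T)$ slightly larger than $[t_0,t_1]$ to accommodate the time shift by $h$, and observe that $\eta^2\bw_h(\cdot,t)\in H^1_0(U;\R^m)$ for a.e.\ $t$ since $D\bv_t\in L^2_{\text{loc}}$ by \eqref{NaturalSpace}.

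The execution then proceeds as follows. First, subtract the weak equation \eqref{WeakSolnCond} at time $t$ from the one at $t+h$ and test the resulting identity against $\eta^2\bw_h(\cdot,t)$. The left side is bounded below by $\theta\int_U\eta^2|\bw_h|^2\,dx$ thanks to the uniform convexity \eqref{UnifConv} of $\psi$; the right side is controlled by $\Lambda\int_U|\Delta_h D\bv|(\eta^2|D\bw_h|+2\eta|D\eta||\bw_h|)\,dx$ thanks to the Lipschitz bound on $DF$ implied by \eqref{UnifConv2}, where $\Delta_h D\bv(\cdot,t):=D\bv(\cdot,t+h)-D\bv(\cdot,t)$. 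Second, I would integrate in $t$ over $[t_0,t_1]$ and apply Cauchy--Schwarz in space-time. The factor $\int_{t_0}^{t_1}\int_W|\Delta_h D\bv|^2\,dx\,dt$ is estimated using the representation $\Delta_h D\bv(x,t)=\int_0^h D\bv_s(x,t+s)\,ds$, Cauchy--Schwarz in $s$, and Fubini to obtain a bound of order $h^2\|D\bv_s\|_{L^2(W\times[t_0',t_1'])}^2$; the complementary factor involving $|D\bw_h|$ and $|\bw_h|$ is bounded uniformly in $h$ by the natural space membership \eqref{NaturalSpace} on the enlarged cylinder. Combining gives $\int_{t_0}^{t_1}\int_V|\bw_h|^2\,dx\,dt\le Ch$, which for $|h|<1$ is stronger than the claimed bound $C|h|^{1/2-1/p}$.

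The main obstacle is the admissibility of the test function: one must verify that $\eta^2\bw_h(\cdot,t)$ lies in $H^1_0(U;\R^m)$ for almost every $t\in[t_0,t_1]$ with an $H^1$ norm bounded uniformly in $h$. This follows from $D\bv_t\in L^2_{\text{loc}}(U\times(0,T);\Mmn)$ together with an application of Fubini on $W\times[t_0',t_1']$, but is the one structural point that requires care. Beyond this, the argument reduces to a routine difference-quotient estimate, which actually yields the linear bound in $|h|$ rather than the weaker $|h|^{1/2-1/p}$ stated; presumably the latter formulation is chosen for parity with the corresponding estimate \eqref{FractionD2VDeriv} for $D^2\bv$, where the H\"older continuity of $D^2F$ and the higher integrability exponent $p>2$ from Corollary \ref{PeeCorollary} enter more essentially.
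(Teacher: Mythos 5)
Your argument is correct, and it is genuinely different from (and in fact sharper than) the proof in the paper. The paper does not difference the time-slice equations at all: it runs the estimate through the $\psi^*$-energy identity \eqref{SecondIdentitywithU}, uses the uniform convexity of $\psi^*$ to produce $\int_V|D\psi(\bv_t(\cdot,t+h))-D\psi(\bv_t(\cdot,t))|^2dx$, converts back to $|\bv_t(\cdot,t+h)-\bv_t(\cdot,t)|^2$ via \eqref{UnifConv}, and controls the cross term $\int_t^{t+h}\int_U\bv_t\cdot D^2F(D\bv)D\bv_t\,Du$ by H\"older together with the higher integrability $|\bv_t||D\bv_t|\in L^{\frac{2p}{p+2}}_{\text{loc}}$ from Proposition \ref{H2prop} and Corollary \ref{PeeCorollary}; that last step is exactly where the exponent $\tfrac12-\tfrac1p$ comes from. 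Your route --- subtract \eqref{WeakSolnCond} at $t+h$ and $t$, test with $\eta^2(\bv_t(\cdot,t+h)-\bv_t(\cdot,t))$, use the strong monotonicity of $D\psi$ on the left and the Lipschitz bound $|DF(M_1)-DF(M_2)|\le\Lambda|M_1-M_2|$ on the right, then Cauchy--Schwarz in space-time with $\|D\bv(\cdot,\cdot+h)-D\bv\|_{L^2}\le|h|\,\|D\bv_t\|_{L^2}$ --- needs only \eqref{UnifConv}, \eqref{UnifConv2}, and $D\bv_t\in L^2_{\text{loc}}$ from \eqref{NaturalSpace}, bypasses $\psi^*$ and the exponent $p$ entirely, and yields the linear rate $C|h|$, which implies the stated bound for $|h|<1$ (and would even improve the admissible range \eqref{BetaInterval} to $\beta<\tfrac12$, though this does not change Theorem \ref{mainThm}, whose exponent is anyway capped by $\tfrac{\alpha}{2}$ through \eqref{FractionD2VDeriv}). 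The one structural point you correctly flag --- admissibility of the $t$-dependent test function --- is handled as you say: by Fubini, $\bv_t(\cdot,t)\in H^1(W;\R^m)$ for a.e.\ $t$, and by Proposition \ref{H2prop}(i) the equation holds for a.e.\ $t$ with a single exceptional null set independent of the test function, so differencing at $t$ and $t+h$ is legitimate for a.e.\ $t$. Note also that your constant is controlled by $\|\bv_t\|_{L^2}$ and $\|D\bv_t\|_{L^2}$ on a slightly larger cylinder, so it remains uniform along the blow-up sequence in Proposition \ref{CompactnessProp}, where the paper later uses the explicit form \eqref{SpecificVtFractEstimate}; your version would serve that purpose equally well.
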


\begin{proof}
Assume $u\in C^\infty_c(U)$ is nonnegative and $u\equiv 1$ in $V$. Let us also initially suppose $0<h<\frac{1}{2}\min\{1,t_0,T-t_1\}$. By \eqref{SecondIdentitywithU}, we have 
\begin{align}
&\int_Uu(x)\psi^*\left(D\psi(\bv_t(x,t+h))\right)dx=\int_Uu(x)\psi^*\left(D\psi(\bv_t(x,t))\right)dx \nonumber \\
&-\int^{t+h}_t\int_U\left(uD^2F(D\bv)(D\bv_t, D\bv_t)+\bv_t\cdot D^2F(D\bv)D\bv_tDu\right)dxd\tau\\
&\le \int_Uu(x)\psi^*\left(D\psi(\bv_t(x,t))\right)dx-\int^{t+h}_t\int_U\bv_t\cdot D^2F(D\bv)D\bv_tDu dxd\tau.
\end{align}
Using the uniform convexity of $\psi^*$, we also have
\begin{align*}
-\int^{t+h}_t\int_U\bv_t\cdot D^2F(D\bv)D\bv_tDu dxd\tau&\ge \int_Uu(x)\left[\psi^*\left(D\psi(\bv_t(x,t+h))\right)-\psi^*\left(D\psi(\bv_t(x,t))\right)\right]dx\\
&\ge \int_Uu(x)\bv_t(x,t)\cdot\left(D\psi(\bv_t(x,t+h))-D\psi(\bv_t(x,t)))\right)dx\\
& +\frac{1}{2\Theta}\int_U u(x)\left|D\psi(\bv_t(x,t+h))-D\psi(\bv_t(x,t))\right|^2dx\\
&\ge \int_U\left(DF(D\bv(x,t+h))-DF(D\bv(x,t))\right)\cdot D(u(x)\bv_t(x,t)) dx\\
& +\frac{1}{2\Theta}\int_V\left|D\psi(\bv_t(x,t+h))-D\psi(\bv_t(x,t))\right|^2dx.
\end{align*}

\par Notice 
\begin{align*}
&\left|\int_U\left(DF(D\bv(x,t+h))-DF(D\bv(x,t))\right)\cdot D(u(x)\bv_t(x,t)) dx \right|\\
&\le \left(\int_U\left|DF(D\bv(x,t+h))-DF(D\bv(x,t))\right|^2dx \right)^{1/2}\left(\int_U|D(u\bv_t)|^2dx\right)^{1/2} \\
&\le\Lambda\left(\int_U\left|D\bv(x,t+h)-D\bv(x,t)\right|^2dx \right)^{1/2}\left(\int_U|D(u\bv_t)|^2dx\right)^{1/2} \\
&\le\Lambda h^{1/2}\left(\int^{t+h}_t\int_U\left|D\bv_t\right|^2dxds\right)^{1/2}\left(\int_U|\bv_t\otimes Du+uD\bv_t|^2dx\right)^{1/2}\\
&\le\Lambda h^{1/2}\left(\int^{(t_1+T)/2}_{t_0}\int_U\left|D\bv_t\right|^2dxds\right)^{1/2}\left(\int_U|\bv_t\otimes Du+uD\bv_t|^2dx\right)^{1/2}.
\end{align*}
Consequently, 
\begin{align*}
&\int^{t_1}_{t_0}\left|\int_U\left(DF(D\bv(x,t+h))-DF(D\bv(x,t))\right)\cdot D(u(x)\bv_t(x,t))dx\right|dt\\
&\le \Lambda h^{1/2}\left(\int^{(t_1+T)/2}_{t_0}\int_U\left|D\bv_t\right|^2dxds\right)^{1/2}\int^{t_1}_{t_0}\left(\int_U|\bv_t\otimes Du+uD\bv_t|^2dx\right)^{1/2}dt\\
&\le \Lambda (t_1-t_0)^{1/2} h^{1/2}\left(\int^{(t_1+T)/2}_{t_0}\int_U\left|D\bv_t\right|^2dxds\right)^{1/2}\left(\int^{t_1}_{t_0}\int_U|\bv_t\otimes Du+uD\bv_t|^2dxdt\right)^{1/2}.
\end{align*}
Here $\bv_t\otimes Du$ is the matrix value mapping with $i,j$th component function $v^i_tu_{x_j}$.

\par By Proposition \ref{H2prop} and Corollary \ref{PeeCorollary}, $|\bv_t||D\bv_t|\in L^{\frac{2p}{p+2}}_{\text{loc}}(U\times(0,T))$. 
As a result,
\begin{align*}
\int^{t+h}_t\int_U\bv_t\cdot D^2F(D\bv)D\bv_tDu dxd\tau&\le \Lambda \left(\int^{t+h}_t\int_V\left(|Du||\bv_t||D\bv_t|\right)^{\frac{2p}{p+2}}dxd\tau\right)^{\frac{1}{2}+\frac{1}{p}}(|V|h)^{\frac{1}{2}-\frac{1}{p}}\\
&\le\Lambda \left(\int^{(t_1+T)/2}_{t_0}\int_U\left(|Du||\bv_t||D\bv_t|\right)^{\frac{2p}{p+2}}dxd\tau\right)^{\frac{1}{2}+\frac{1}{p}}(|V|h)^{\frac{1}{2}-\frac{1}{p}}.
\end{align*}
Moreover, 
\begin{align*}
&\int^{t_1}_{t_0}\int^{t+h}_t\int_U\bv_t\cdot D^2F(D\bv)D\bv_tDu dxdt\\
&\hspace{1in}\le \Lambda(t_1-t_0)\left(\int^{(t_1+T)/2}_{t_0}\int_U\left(|Du||\bv_t||D\bv_t|\right)^{\frac{2p}{p+2}}dxd\tau\right)^{\frac{1}{2}+\frac{1}{p}}(|V|h)^{\frac{1}{2}-\frac{1}{p}}.
\end{align*}
Putting all of these inequalities together gives us 
\begin{align}\label{SpecificVtFractEstimate}
&\int^{t_1}_{t_0}\int_V\left|\bv_t(x,t+h)-\bv_t(x,t)\right|^2dxdt  \\
&\le\frac{1}{\theta^2}\int^{t_1}_{t_0}\int_V\left|D\psi(\bv_t(x,t+h))-D\psi(\bv_t(x,t))\right|^2dxdt \\
&\le\frac{2\Theta\Lambda}{\theta^2}  \left\{(t_1-t_0)^{1/2}\left(\int^{(t_1+T)/2}_{t_0}\int_U\left|D\bv_t\right|^2dxds\right)^{1/2}\left(\int^{t_1}_{t_0}\int_U|\bv_t\otimes Du+uD\bv_t|^2dxdt\right)^{1/2}
\right. \\
& \left.\quad+\; (t_1-t_0)\left(\int^{(t_1+T)/2}_{t_0}\int_U\left(|Du||\bv_t||D\bv_t|\right)^{\frac{2p}{p+2}}dxd\tau\right)^{\frac{1}{2}+\frac{1}{p}}|V|^{\frac{1}{2}-\frac{1}{p}}\right\}h^{\frac{1}{2}-\frac{1}{p}}.
\end{align}
A similar argument can be employed to establish \eqref{FractionVtDeriv} for $h<0$, as well. 
\end{proof}

We have the following consequence of the preceding proposition which asserts that $\bv_t$ is fractionally time differentiable as exhibited in \eqref{FractionVtDeriv2} below. We will omit a proof as this has been previously established (Proposition 3.4  \cite{DuzMin} or Proposition 2.19 of \cite{DuzMinSte}).
\begin{cor}\label{TimeDiffDVCor}
Assume $\bv$ is a weak solution of \eqref{mainPDE} in $U\times (0,T)$ and $p>2$ is the exponent in \eqref{extraIntegrablePee}.  For each open $V\subset\subset U$, $[t_0,t_1]\in (0,T)$, and
\begin{equation}\label{BetaInterval}
\beta\in \left(0,\frac{1}{2}-\frac{1}{p}\right),
\end{equation}
there is constant $A=A(p,\beta,t_0,t_1,V)>0$ such that 
\begin{equation}\label{FractionVtDeriv2}
\int^{t_1}_{t_0}\int^{t_1}_{t_0}\int_V\frac{|\bv_t(x,t)-\bv_t(x,s)|^2}{|t-s|^{1+\beta}}dxdtds\le A\left(C+\|\bv_t\|^2_{L^2(V\times[t_0,t_1])}\right).
\end{equation} 
Here $C$ is the constant in \eqref{FractionVtDeriv}.
\end{cor}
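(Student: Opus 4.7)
The plan is to deduce the Gagliardo-type fractional Sobolev seminorm bound \eqref{FractionVtDeriv2} from the translation estimate \eqref{FractionVtDeriv} by a standard change of variables combined with splitting the time-lag into a small range (where the refined estimate \eqref{FractionVtDeriv} can be used) and a large range (where a trivial bound in terms of $\|\bv_t\|_{L^2}^2$ suffices). Fix $h_0 := \tfrac{1}{4}\min\{1, t_0, T-t_1\}$, so that \eqref{FractionVtDeriv} applies on the slightly enlarged interval $[t_0 - h_0, t_1 + h_0] \subset (0,T)$. Set
\[
I := \int^{t_1}_{t_0}\int^{t_1}_{t_0}\int_V\frac{|\bv_t(x,t)-\bv_t(x,s)|^2}{|t-s|^{1+\beta}}\,dx\,dt\,ds,
\]
and split $I = I_1 + I_2$ according to whether $|t-s|\le h_0$ or $|t-s|>h_0$.

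For the large-lag piece $I_2$, since $|t-s|^{-(1+\beta)} \le h_0^{-(1+\beta)}$ on this region, the elementary inequality $|a-b|^2 \le 2(|a|^2+|b|^2)$ together with Fubini yields
\[
I_2 \le 4(t_1-t_0)\,h_0^{-(1+\beta)}\,\|\bv_t\|_{L^2(V\times[t_0,t_1])}^{2},
\]
which is absorbed into the right-hand side of \eqref{FractionVtDeriv2}. For the small-lag piece $I_1$, I would perform the change of variables $h = t - s$ (with $t$ remaining as the other integration variable) and apply Fubini to get
\[
I_1 \le \int_{-h_0}^{h_0} |h|^{-(1+\beta)} \left(\int_{t_0 \vee (t_0 - h)}^{t_1 \wedge (t_1 - h)}\int_V |\bv_t(x, t+h) - \bv_t(x,t)|^2\,dx\,dt\right) dh.
\]
Extending the inner time integral to the slightly larger interval needed to invoke \eqref{FractionVtDeriv}, the parenthesized quantity is bounded by $C\,|h|^{1/2 - 1/p}$, so
\[
I_1 \le C\int_{-h_0}^{h_0}|h|^{-(1+\beta) + (1/2 - 1/p)}\,dh.
\]
This last integral converges exactly when $-1 - \beta + 1/2 - 1/p > -1$, that is, when $\beta < 1/2 - 1/p$, which is precisely the assumption \eqref{BetaInterval}; the resulting constant absorbs into $A = A(p,\beta,t_0,t_1,V)$ after accounting for the factor $h_0$.

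Combining the two bounds gives \eqref{FractionVtDeriv2} with a constant $A$ depending only on $p$, $\beta$, $t_1 - t_0$, and $h_0$ (hence on $t_0$ and $T - t_1$), as claimed. I do not anticipate any genuine obstacle here: the calculation is a direct one-variable Riesz-potential estimate, and the restriction $\beta < 1/2 - 1/p$ appears solely from the endpoint integrability of $|h|^{-(1+\beta)+(1/2-1/p)}$ at $h=0$. The only mild technical point is matching the time intervals in the change of variables so that \eqref{FractionVtDeriv} applies, which is handled by choosing $h_0$ small enough at the outset.
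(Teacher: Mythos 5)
Your argument is correct: the splitting of the double time integral at the lag $h_0$, the trivial $L^2$ bound on the large-lag region, and the change of variables plus \eqref{FractionVtDeriv} on the small-lag region (where $\beta<\tfrac12-\tfrac1p$ is exactly what makes $\int_0^{h_0}|h|^{-(1+\beta)+\frac12-\frac1p}\,dh$ finite) yields \eqref{FractionVtDeriv2} with $A$ of the stated form. The paper itself omits this proof and instead cites Proposition 3.4 of \cite{DuzMin} and Proposition 2.19 of \cite{DuzMinSte}; your computation is precisely the standard Nikolskii-to-fractional-Sobolev argument underlying those results, so it legitimately fills the gap. One small remark: no enlargement of $[t_0,t_1]$ is actually needed, since after substituting $s=t+h$ the $t$-domain is a subinterval of $[t_0,t_1]$ and \eqref{FractionVtDeriv} already permits $t+h$ to leave $[t_0,t_1]$ for $|h|<\tfrac12\min\{1,t_0,T-t_1\}$; this also keeps the constant $C$ the one associated with $[t_0,t_1]$ itself, as the statement requires.
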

Now let us move on to establishing an analogous fractional time differentiability of $D^2\bv$.  One of the hypotheses of 
the following assertion (and of Theorem \ref{mainThm}) is that  
\be\label{DtwoFCalpha}
D^2F\in C^\alpha(\Mmn; {\cal S}^2(\Mmn))
\ee
for some $\alpha\in (0,1]$.   The reason we have decided to discuss this assumption prior to the statement is to emphasize that 
\eqref{DtwoFCalpha} also holds with any H\"older exponent less that or equal to $\alpha$, as well.  This claim follows as $D^2F$ is uniformly bounded (recall 
\eqref{UnifConv3}). Therefore, we can suppose without any loss of generality that \eqref{DtwoFCalpha} holds for an exponent 
$\alpha\in (0,1]$ that additionally satisfies
\be\label{alphaExtra}
\alpha\left(\frac{p}{p-2}\right)\le 2.
\ee
Here $p>2$ is the exponent in Corollary \ref{PeeCorollary}.

\begin{prop}
Assume $\bv$ is a weak solution of \eqref{mainPDE} in $U\times (0,T)$ and that $F$ satisfies \eqref{DtwoFCalpha}.  For each open $V\subset\subset U$ and $[t_0,t_1]\in (0,T)$, there is a constant $C$ such that 
\begin{equation}\label{FractionD2VDeriv}
\int^{t_1}_{t_0}\int_V|D^2\bv(x,t+h)-D^2\bv(x,t)|^2dxdt\le C |h|^{\frac{\alpha}{2}}
\end{equation} 
for $0<|h|<\frac{1}{2}\min\{1,t_0,T-t_1\}$. 
\end{prop}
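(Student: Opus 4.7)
The plan is to proceed in analogy with the proof of \eqref{FractionVtDeriv}, but working from the spatially differentiated equation \eqref{MainEqnTimeSliceXi} rather than the original PDE, so that the natural coercive quantity that emerges is $|D^2\bw_h|^2$ rather than $|\bv_t(\cdot,t+h)-\bv_t(\cdot,t)|^2$. Fix $V\subset\subset W\subset\subset U$ open, a nonnegative cutoff $\eta\in C^\infty_c(U)$ with $\eta\equiv 1$ on $V$ and supported in $W$, and set $\bw_h:=\bv(\cdot,t+h)-\bv(\cdot,t)$, $\bu^i_h:=\partial_{x_i}\bw_h$, and $\bg_h:=D\psi(\bv_t(\cdot,t+h))-D\psi(\bv_t(\cdot,t))$.

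First I would subtract the weak form of \eqref{MainEqnTimeSliceXi} at times $t$ and $t+h$ to obtain, for every $\bz\in H^1_0(U;\R^m)$ and each $i$,
\[
\int_U D^2F(D\bv(\cdot,t+h))(D\bu^i_h,D\bz)\,dx = -\int_U\bigl[D^2F(D\bv(\cdot,t+h))-D^2F(D\bv(\cdot,t))\bigr](D\bv_{x_i}(\cdot,t),D\bz)\,dx - \int_U\partial_{x_i}\bg_h\cdot\bz\,dx.
\]
Testing with $\bz=\eta^2\bu^i_h$ and using \eqref{UnifConv3} produces the coercive quantity $\lambda\int\eta^2|D\bu^i_h|^2$ modulo $D\eta$-terms that Young's inequality absorbs. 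The ``$D^2F$-difference'' term on the right is controlled using the $\alpha$-H\"older continuity of $D^2F$, i.e.\ $|D^2F(D\bv(\cdot,t+h))-D^2F(D\bv(\cdot,t))|\le L|D\bw_h|^\alpha$; after another application of Young's inequality and absorption, it contributes $C\int\eta^2|D\bw_h|^{2\alpha}|D^2\bv(\cdot,t)|^2$. The $\partial_{x_i}\bg_h$ term I would deliberately leave as a paired integral, to be bounded by Cauchy--Schwarz in time below rather than Youngified (which would lose the desired scaling).

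Summing over $i=1,\dots,n$ (using $\sum_i|\bu^i_h|^2=|D\bw_h|^2$ and $\sum_i|D\bu^i_h|^2=|D^2\bw_h|^2$) and integrating over $t\in[t_0,t_1]$ leaves three contributions on the right, which I would estimate as follows. \emph{First,} by Cauchy--Schwarz in time and the $L^2_{\mathrm{loc}}$-bound on $D\bv_t$ from \eqref{NaturalSpace}, $\int_{t_0}^{t_1}\|D\bw_h\|_{L^2(W)}^2\,dt\le Ch^2$. \emph{Second,} since $|\partial_{x_i}\bg_h|\le\Theta(|D\bv_t(\cdot,t+h)|+|D\bv_t(\cdot,t)|)$, a double application of Cauchy--Schwarz gives
\[
\sum_i\int_{t_0}^{t_1}\Bigl|\int_U\partial_{x_i}\bg_h\cdot\eta^2\bu^i_h\,dx\Bigr|\,dt\le \Bigl(\int\|D\bv_t\|_{L^2(W)}^2\,dt\Bigr)^{1/2}\Bigl(\int\|D\bw_h\|_{L^2(W)}^2\,dt\Bigr)^{1/2}\le Ch.
\]
\emph{Third,} the H\"older-error term $\int_{t_0}^{t_1}\int_W|D\bw_h|^{2\alpha}|D^2\bv|^2\,dx\,dt$ is handled by H\"older's inequality in space with exponents $p/2$ and $p/(p-2)$ (Corollary \ref{PeeCorollary} gives $D^2\bv\in L^p_{\mathrm{loc}}$), followed by space--time interpolation of $|D\bw_h|$ between its $L^2_{tx}$-norm (which scales like $h$ by the calculation just used) and its uniform $L^q_{tx}$-boundedness for some $q>2$ coming from \eqref{H2Boundv} and Sobolev embedding in space. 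The condition \eqref{alphaExtra}, namely $\alpha p/(p-2)\le 2$, is precisely what keeps the exponent $2\alpha p/(p-2)$ on $|D\bw_h|$ at most $4$ and makes the interpolated power of $h$ at least $\alpha/2$. Since $h$ and $h^2$ are both $\le h^{\alpha/2}$ for $h\in(0,1)$ and $\alpha\in(0,1]$, combining the three bounds yields $\int_{t_0}^{t_1}\int_V|D^2\bw_h|^2\,dx\,dt\le Ch^{\alpha/2}$ for $h>0$; the case $h<0$ is symmetric.

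The hard part is the third (H\"older-error) term: balancing the $h$-decay of $\|D\bw_h\|_{L^2_{tx}}$ against the fixed $L^p$-integrability of $D^2\bv$ so as to land exactly at the exponent $\alpha/2$ is where the structural condition \eqref{alphaExtra} is consumed. The remaining work is essentially bookkeeping driven by the structure of \eqref{MainEqnTimeSliceXi} and by the estimates already established.
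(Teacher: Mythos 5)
Your overall scheme (difference the spatially differentiated equation \eqref{MainEqnTimeSliceXi} in time, test with $\eta^2\partial_{x_i}\bw_h$, use \eqref{UnifConv3} for coercivity, and pay for the moving coefficients with the H\"older continuity of $D^2F$ against the $L^p$-integrability of $D^2\bv$) is the same circle of ideas as the paper's proof, and your treatment of the linear and cutoff terms is fine. The genuine gap is in the third, ``H\"older-error'' term. Because you absorb the cross term $[D^2F(D\bv(t+h))-D^2F(D\bv(t))](D\bv_{x_i}(t),D\bu^i_h)$ into the coercive quantity by Young's inequality, the H\"older modulus gets \emph{squared}: you are left with $\int\eta^2|D\bw_h|^{2\alpha}|D^2\bv|^2$, and after H\"older in space against $\|\,|D^2\bv|^2\|_{L^{p/2}}$ you need $D\bw_h$ in $L^{2\alpha p/(p-2)}$, an exponent that \eqref{alphaExtra} only caps at $4$, not at $2$. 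The interpolation you invoke to handle this is not justified: the uniform higher integrability of $D\bv$ available from \eqref{H2Boundv} plus Sobolev embedding is $L^\infty_{\mathrm{loc}}((0,T);L^{2n/(n-2)}_{\mathrm{loc}})$, which lies \emph{below} the required exponent once $n\ge 5$ (and the parabolic exponent $2+4/n$ of Corollary \ref{PeeCorollary} is below $4$ already for $n\ge 3$); moreover, even when some admissible $q$ exists, interpolating $\|D\bw_h\|_{L^s}\le\|D\bw_h\|_{L^2}^{1-\sigma}\|D\bw_h\|_{L^q}^{\sigma}$ gives a power of $h$ proportional to $1-\sigma=\bigl(\tfrac1s-\tfrac1q\bigr)/\bigl(\tfrac12-\tfrac1q\bigr)$, which degrades to $0$ as $q$ approaches $s$; the claim that \eqref{alphaExtra} alone ``makes the interpolated power of $h$ at least $\alpha/2$'' is therefore unsupported as stated.

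The paper avoids this squaring altogether: instead of subtracting the two weak equations and testing, it writes the uniform convexity inequality for $M\mapsto\tfrac12 D^2F(D\bv(t))(M,M)$ comparing $D\bv_{x_i}(t+h)$ with $D\bv_{x_i}(t)$, and \emph{adds} the same inequality with $t$ and $t+h$ interchanged. In the sum, the coefficient difference $D^2F(D\bv(t))-D^2F(D\bv(t+h))$ multiplies the quadratic forms $(D\bv_{x_i},D\bv_{x_i})$ only, so the modulus $|D\bw_h|^\alpha$ appears to the first power; then \eqref{alphaExtra} lets one pass from $L^{\alpha p/(p-2)}$ to the $L^2$-norm of $D\bw_h$, which is $O(h^{1/2})$ uniformly in $t$, yielding exactly $h^{\alpha/2}$. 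If you want to keep your difference-and-test formulation, there is a simple repair: since $D^2F$ is bounded (by \eqref{UnifConv3}), it is also H\"older with exponent $\alpha/2$, so bound the coefficient difference by $C|D\bw_h|^{\alpha/2}$ before applying Young; the resulting error term is $C\int\eta^2|D\bw_h|^{\alpha}|D^2\bv|^2$, the space exponent on $D\bw_h$ becomes $\alpha p/(p-2)\le 2$ by \eqref{alphaExtra}, and the $L^2$ bound $\|D\bw_h(t)\|_{L^2(W)}\le Ch^{1/2}$ gives precisely the asserted $Ch^{\alpha/2}$ in \eqref{FractionD2VDeriv}.
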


\begin{proof}
First let $h\in \left(0,\frac{1}{2}\min\{1,t_0,T-t_1\}\right)$ and assume $u\in C^\infty_c(U)$ with $0\le u\le 1$ and $u\equiv 1$ in $V$. In the computations below, we will 
omit the spatial variable of $\bv$ and its derivatives.

\par 1. Suppose $i\in \{1,\dots, n\}$. By the uniform convexity of $M\mapsto \frac{1}{2}D^2F(D\bv(t))(M, M)$ for each $t\in (0,T)$,
\begin{align*}
&\int_U u\left[\frac{1}{2}D^2F(D\bv(t))(D\bv_{x_i}(t+h), D\bv_{x_i}(t+h))-\frac{1}{2}D^2F(D\bv(t))(D\bv_{x_i}(t), D\bv_{x_i}(t))\right]dx\\
&\hspace{.5in}\ge \int_U uD^2F(D\bv(t))(D\bv_{x_i}(t), D\bv_{x_i}(t+h)-D\bv_{x_i}(t))dx \\
&\hspace{1in} + \frac{\lambda}{2}\int_U u|D\bv_{x_i}(t+h)-D\bv_{x_i}(t)|^2dx\\
&\hspace{.5in}\ge - \int_U \Div(uD^2F(D\bv(t))D\bv_{x_i}(t))\cdot (\bv_{x_i}(t+h)-\bv_{x_i}(t))dx \\
&\hspace{1in} + \frac{\lambda}{2}\int_U u|D\bv_{x_i}(t+h)-D\bv_{x_i}(t)|^2dx\\
&\hspace{.5in}\ge - \int_U (D^2F(D\bv(t))D\bv_{x_i}(t)Du + u\partial_{x_i}D\psi(\bv_t(t)))\cdot (\bv_{x_i}(t+h)-\bv_{x_i}(t))dx \\
&\hspace{1in}+ \frac{\lambda}{2}\int_V |D\bv_{x_i}(t+h)-D\bv_{x_i}(t)|^2dx\\
&\hspace{.5in}\ge - \int_U (D^2F(D\bv(t))D\bv_{x_i}(t)Du + uD^2\psi(\bv_t(t))\bv_{x_it}(t))\cdot (\bv_{x_i}(t+h)-\bv_{x_i}(t))dx \\
&\hspace{1in}+ \frac{\lambda}{2}\int_V |D\bv_{x_i}(t+h)-D\bv_{x_i}(t)|^2dx.
\end{align*}
Also observe that the same inequality holds with $t$ and $t+h$ reversed.  Adding these inequalities together gives 
\begin{align*}
&\lambda\int_V|D\bv_{x_i}(t+h)-D\bv_{x_i}(t)|^2dx \\
&\quad \le   \int_U (D^2F(D\bv(t))D\bv_{x_i}(t)Du + uD^2\psi(\bv_t(t))\bv_{x_it}(t))\cdot (\bv_{x_i}(t+h)-\bv_{x_i}(t))dx\\
&\quad  + \int_U (D^2F(D\bv(t+h))D\bv_{x_i}(t+h)Du + uD^2\psi(\bv_t(t+h))\bv_{x_it}(t+h))\cdot (\bv_{x_i}(t)-\bv_{x_i}(t+h))dx\\
&\quad +\frac{1}{2}\int_Uu\left(D^2F(D\bv(t))-D^2F(D\bv(t+h))\right)(D\bv_{x_i}(t+h), D\bv_{x_i}(t+h))dx\\
&\quad +\frac{1}{2}\int_Uu\left(D^2F(D\bv(t+h))-D^2F(D\bv(t))\right)(D\bv_{x_i}(t), D\bv_{x_i}(t))dx.
\end{align*}

\par 2. Observe
\begin{align*}
&\int_U (D^2F(D\bv(t))D\bv_{x_i}(t)Du + uD^2\psi(\bv_t(t))\bv_{x_it}(t))\cdot (\bv_{x_i}(t+h)-\bv_{x_i}(t))dx\\
&\quad=\int_V (D^2F(D\bv(t))D\bv_{x_i}(t)Du + uD^2\psi(\bv_t(t))\bv_{x_it}(t))\cdot (\bv_{x_i}(t+h)-\bv_{x_i}(t))dx\\
&\quad \le \left(\int_V|D^2F(D\bv(t))D\bv_{x_i}(t)Du + uD^2\psi(\bv_t(t))\bv_{x_it}(t)|^2dx \right)^{1/2}\left(\int_V|\bv_{x_i}(t+h)-\bv_{x_i}(t)|^2dx\right)^{1/2}\\
&\quad \le \left(2\int_V\Lambda^2|Du|^2|D^2\bv(t)|^2 + \Theta^2u^2|D\bv_t(t)|^2dx \right)^{1/2}\left(\int_V|D\bv(t+h)-D\bv(t)|^2dx\right)^{1/2}\\
&\quad \le \left(2\int_V\Lambda^2|Du|^2|D^2\bv(t)|^2 + \Theta^2u^2|D\bv_t(t)|^2dx \right)^{1/2}\left(\int_V\int^{t+h}_t|D\bv_t(s)|^2dsdx\cdot h\right)^{1/2}\\
&\quad \le \sqrt{2}\left(\int_V\Lambda^2|Du|^2|D^2\bv(t)|^2 + \Theta^2u^2|D\bv_t(t)|^2dx \right)^{1/2}\left(\int^{(t_1+T)/2}_{t_0}\int_V|D\bv_t|^2dxds\right)^{1/2}h^{1/2}.
\end{align*}
As a result, 
\begin{align*}
&\left|\int^{t_1}_{t_0}\int_U (D^2F(D\bv(t))D\bv_{x_i}(t)Du + uD^2\psi(\bv_t(t))\bv_{x_it}(t))\cdot (\bv_{x_i}(t+h)-\bv_{x_i}(t))dxdt\right|\\
& \le (2h)^{1/2}\left(\int^{(t_1+T)/2}_{t_0}\int_V|D\bv_t|^2dxds\right)^{1/2}\int^{t_1}_{t_0}\left(\int_V\Lambda^2|Du|^2|D^2\bv(t)|^2 + \Theta^2u^2|D\bv_t(t)|^2dx \right)^{1/2}dt\\
& \le(2h)^{1/2}\left(\int^{(t_1+T)/2}_{t_0}\int_V|D\bv_t|^2dxds\right)^{1/2}\times \\
&\quad\quad\left(\int^{t_1}_{t_0}\int_V\Lambda^2|Du|^2|D^2\bv(t)|^2 + \Theta^2u^2|D\bv_t(t)|^2dxdt \right)^{1/2}(t_1-t_0)^{1/2}.
\end{align*}

Likewise, we find 
\begin{align*}
&\left|\int^{t_1}_{t_0}\int_U (D^2F(D\bv(t+h))D\bv_{x_i}(t+h)Du + uD^2\psi(\bv_t(t+h))\bv_{x_it}(t+h))\cdot (\bv_{x_i}(t)-\bv_{x_i}(t+h))dxdt\right|\\
& \le(2h)^{1/2}\left(\int^{(t_1+T)/2}_{t_0}\int_V|D\bv_t|^2dxds\right)^{1/2} \times \\
&\quad\quad \left(\int^{t_1}_{t_0}\int_V\Lambda^2|Du|^2|D^2\bv(t+h)|^2 + \Theta^2u^2|D\bv_t(t+h)|^2dxdt \right)^{1/2}(t_1-t_0)^{1/2}\\
& \le(2h)^{1/2}\left(\int^{(t_1+T)/2}_{t_0}\int_V|D\bv_t|^2dxds\right)^{1/2} \times \\
&\quad\quad \left(\int^{(t_1+T)/2}_{t_0}\int_V\Lambda^2|Du|^2|D^2\bv(s)|^2 + \Theta^2u^2|D\bv_t(s)|^2dxds \right)^{1/2}(t_1-t_0)^{1/2}.
\end{align*}

\par 3. Recall that we may assume \eqref{alphaExtra}. With this assumption, we can again apply H\"older's inequality to get
\begin{align*}
&\int_Uu\left(D^2F(D\bv(t))-D^2F(D\bv(t+h))\right)\left(D\bv_{x_i}(t+h), D\bv_{x_i}(t+h)\right)dx\\
&\quad \le \int_Uu\left|D^2F(D\bv(t))-D^2F(D\bv(t+h))\right| |D\bv_{x_i}(t+h)|^2dx\\
&\quad \le \left(\int_Uu\left|D^2F(D\bv(t))-D^2F(D\bv(t+h))\right|^{\frac{p}{p-2}}dx\right)^{1-2/p}\left(\int_Uu|D\bv_{x_i}(t+h)|^pdx\right)^{2/p}\\
&\quad \le C\left(\int_Uu\left|D\bv(t)-D\bv(t+h)\right|^{\alpha\left(\frac{p}{p-2}\right)}dx\right)^{1-2/p}\left(\int_Uu|D\bv_{x_i}(t+h)|^pdx\right)^{2/p}\\
&\quad \le C\left(\int_Uu\left|D\bv(t)-D\bv(t+h)\right|^{2}dx\right)^{\alpha/2}|V|^{\frac{p-2}{p}-\frac{\alpha}{2}}\left(\int_Uu|D\bv_{x_i}(t+h)|^pdx\right)^{2/p}\\
&\quad \le C\left(\int^{t+h}_{t}\int_Uu\left|D\bv_t\right|^{2}dxds\right)^{\alpha/2}h^{\frac{\alpha}{2}}|V|^{\frac{p-2}{p}-\frac{\alpha}{2}}\left(\int_Uu|D^2\bv(t+h)|^pdx\right)^{2/p}\\
&\quad \le C\left(\int^{(t_1+T)/2}_{t_0}\int_Uu\left|D\bv_t\right|^{2}dxds\right)^{\alpha/2}h^{\frac{\alpha}{2}}|V|^{\frac{p-2}{p}-\frac{\alpha}{2}}\left(\int_Uu|D^2\bv(t+h)|^pdx\right)^{2/p}.
\end{align*}
In particular, 
\begin{align*}
&\int^{t_1}_{t_0}\int_Uu\left(D^2F(D\bv(t))-D^2F(D\bv(t+h))\right)D\bv_{x_i}(t+h)\cdot D\bv_{x_i}(t+h)dxdt\\
&\quad \le C\left(\int^{(t_1+T)/2}_{t_0}\int_Uu\left|D\bv_t\right|^{2}dxds\right)^{\alpha/2}h^{\frac{\alpha}{2}}|V|^{\frac{p-2}{p}-\frac{\alpha}{2}}\int^{t_1}_{t_0}\left(\int_Uu|D^2\bv(t+h)|^pdx\right)^{2/p}dt\\
&\quad \le C\left(\int^{(t_1+T)/2}_{t_0}\int_Uu\left|D\bv_t\right|^{2}dxds\right)^{\alpha/2}h^{\frac{\alpha}{2}}|V|^{\frac{p-2}{p}-\frac{\alpha}{2}}\left(\int^{t_1}_{t_0}\int_Uu|D^2\bv(t+h)|^pdxdt\right)^{2/p}(t_1-t_0)^{1-2/p}\\
&\quad \le C\left(\int^{(t_1+T)/2}_{t_0}\int_Uu\left|D\bv_t\right|^{2}dxds\right)^{\alpha/2}h^{\frac{\alpha}{2}}|V|^{\frac{p-2}{p}-\frac{\alpha}{2}}\left(\int^{(t_1+T)/2}_{t_0}\int_Uu|D^2\bv(s)|^pdxds\right)^{2/p}(t_1-t_0)^{1-2/p}.
\end{align*}
Analogously, 
\begin{align*}
&\int^{t_1}_{t_0}\int_Uu\left(D^2F(D\bv(t+h))-D^2F(D\bv(t))\right)D\bv_{x_i}(t)\cdot D\bv_{x_i}(t)dxds\\
&\quad \le C\left(\int^{(t_1+T)/2}_{t_0}\int_Uu\left|D\bv_t\right|^{2}dxds\right)^{\alpha/2}h^{\frac{\alpha}{2}}|V|^{\frac{p-2}{p}-\frac{\alpha}{2}}\left(\int^{t_1}_{t_0}\int_Uu|D^2\bv|^pdxds\right)^{2/p}(t_1-t_0)^{1-2/p}.
\end{align*}
\par 4. Putting all of these estimates  together we find a constant $C_+$ independent of $h$ such that 
$$
\int^{t_1}_{t_0}\int_V|D^2\bv(x,t+h)-D^2\bv(x,t)|^2dx\le C_+ h^{\frac{\alpha}{2}}.
$$
This bound clearly implies \eqref{FractionD2VDeriv} for $h\in \left(0,\frac{1}{2}\min\{1,t_0,T-t_1\}\right)$. 
It is also not difficult to see how to use the ideas above to justify \eqref{FractionD2VDeriv} for $h\in \left(-\frac{1}{2}\min\{1,t_0,T-t_1\},0\right)$. We leave the details to the reader. 
\end{proof}

A direct consequence of the preceding proposition is that $D^2\bv$ is fractionally differentiable in time as exhibited in \eqref{FractionD2VDeriv2} below.  

\begin{cor}\label{SecondDiffDVCor}
Assume $\bv$ is a weak solution of \eqref{mainPDE} in $U\times (0,T)$ and $F$ satisfies \eqref{DtwoFCalpha} for some $\alpha\in (0,1]$.  For each open $V\subset\subset U$, $[t_0,t_1]\in (0,T)$, and
\begin{equation}\label{BetaInterval2}
\beta\in \left(0,\frac{\alpha}{2}\right),
\end{equation}
there is constant $A=A(p,\alpha,\beta,t_0,t_1, V)>0$ such that 
\begin{equation}\label{FractionD2VDeriv2}
\int^{t_1}_{t_0}\int^{t_1}_{t_0}\int_V\frac{|D^2\bv(x,t)-D^2\bv(x,s)|^2}{|t-s|^{1+\beta}}dxdtds\le A\left(C+\|D^2\bv\|^2_{L^2(V\times[t_0,t_1])}\right).
\end{equation} 
Here $C$ is the constant in \eqref{FractionVtDeriv}.
\end{cor}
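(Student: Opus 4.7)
The plan is to deduce the Gagliardo-type estimate \eqref{FractionD2VDeriv2} from the Nikolskii-type bound \eqref{FractionD2VDeriv} by the standard translation-to-fractional-seminorm argument that was used for Corollary \ref{TimeDiffDVCor}. Concretely, I would apply Fubini together with the change of variable $h=t-s$ to rewrite
\begin{equation*}
\int^{t_1}_{t_0}\!\!\int^{t_1}_{t_0}\!\!\int_V\frac{|D^2\bv(x,t)-D^2\bv(x,s)|^2}{|t-s|^{1+\beta}}\,dx\,dt\,ds
=\int^{t_1-t_0}_{-(t_1-t_0)}\frac{I(h)}{|h|^{1+\beta}}\,dh,
\end{equation*}
where $I(h)$ is the integral of $|D^2\bv(x,t+h)-D^2\bv(x,t)|^2$ over the slab $\{(x,t)\in V\times[t_0,t_1]:t+h\in[t_0,t_1]\}$.

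Next I would split the outer $h$-integration into two regimes separated by $h_0:=\tfrac{1}{2}\min\{1,t_0,T-t_1\}$. In order to apply \eqref{FractionD2VDeriv} on the regime $|h|<h_0$ without worrying about $t+h$ leaving $[t_0,t_1]$, I would first enlarge the base interval slightly, replacing $[t_0,t_1]$ by some $[t_0',t_1']$ with $[t_0,t_1]\subset\subset [t_0',t_1']\subset\subset (0,T)$, apply \eqref{FractionD2VDeriv} on the larger interval, and then restrict. This yields $I(h)\le C|h|^{\alpha/2}$ for $|h|<h_0$, so the contribution from this regime is bounded by
\begin{equation*}
C\int_{|h|<h_0}|h|^{\alpha/2-1-\beta}\,dh,
\end{equation*}
which is finite precisely under the hypothesis $\beta<\alpha/2$ from \eqref{BetaInterval2}. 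For the regime $|h|\ge h_0$, I would use the crude bound $I(h)\le 2\,\|D^2\bv\|^2_{L^2(V\times[t_0,t_1])}$ and integrate $|h|^{-1-\beta}$ over $[h_0,t_1-t_0]$, which is finite since $\beta>0$.

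Combining the two estimates produces \eqref{FractionD2VDeriv2} with a constant $A$ depending only on the parameters $p,\alpha,\beta,t_0,t_1,V$ (the dependence on $p$ enters through the exponent $\alpha$ constrained by \eqref{alphaExtra}, used upstream in \eqref{FractionD2VDeriv}). There is no genuine analytic obstacle here, since the argument is purely a real-variable computation. The only point requiring a little care is the boundary effect from translating $t$ by $h$ outside $[t_0,t_1]$, which is absorbed by working on a slightly enlarged interval as above; this mirrors exactly what was done in Propositions 3.4 of \cite{DuzMin} and 2.19 of \cite{DuzMinSte} cited in the proof of Corollary \ref{TimeDiffDVCor}, so the same template applies verbatim.
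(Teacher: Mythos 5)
Your argument is correct and is precisely the standard translation-to-Gagliardo-seminorm computation that the paper itself invokes (it omits the proof, pointing to Proposition 3.4 of \cite{DuzMin} and Proposition 2.19 of \cite{DuzMinSte}), so you and the paper are following the same route. The only cosmetic remark is that the enlargement of $[t_0,t_1]$ is not strictly needed, since \eqref{FractionD2VDeriv} already integrates $t$ over $[t_0,t_1]$ with $t+h$ allowed to leave that interval (staying in $(0,T)$ because $|h|<\tfrac12\min\{1,t_0,T-t_1\}$), so your restricted quantity $I(h)$ is bounded by its left-hand side directly.
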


\par It turns out that we can use these fractional time derivative estimates to investigate compactness properties of weak solutions. To this end, we will  
make use of the following compactness theorem due to J. Simon.

\begin{thm*}\textup{(Theorem 1 of \cite{Simon})}  Let $X$ be a Banach space over $\R$ with norm $\|\cdot \|$ and $p\in [1,\infty)$. Suppose $\{f^k\}_{k\in \N}\subset L^p([t_0,t_1]; X)$ with 
$$
\left\{\int^{s_1}_{s_0}f^k(t)dt\right\}_{k\in \N}
$$
relatively compact in $X$ for all $t_0< s_0\le s_1< t_1$ and 
$$
\lim_{h\rightarrow 0^+}\sup_{k\in \N}\int^{t_1-h}_{t_0}\|f^k(t+h)-f^k(t)\|^pdt=0.
$$
Then there is a subsequence $\{f^{k_j}\}_{j\in \N}$  and $f\in L^p([t_0,t_1]; X)$ such that 
$f^{k_j}\rightarrow f$ in $L^p([t_0,t_1]; X)$.  
\end{thm*}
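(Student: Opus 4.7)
The plan is to adapt the classical Kolmogorov--Riesz--Fr\'echet strategy to the vector-valued setting via Steklov averaging together with a Cantor diagonal argument. For $0<h<t_1-t_0$ I define the time averages
\[
g^k_h(t):=\frac{1}{h}\int_t^{t+h}f^k(s)\,ds,\qquad t\in[t_0,t_1-h],
\]
and first check that $g^k_h\to f^k$ in $L^p([t_0,t_1-h];X)$ as $h\to 0^+$, uniformly in $k$. By Jensen's inequality and Fubini,
\[
\int_{t_0}^{t_1-h}\|g^k_h(t)-f^k(t)\|^p\,dt\le\frac{1}{h}\int_0^h\int_{t_0}^{t_1-h}\|f^k(t+s)-f^k(t)\|^p\,dt\,ds,
\]
and the right-hand side tends to zero as $h\to 0^+$ uniformly in $k$ by the second hypothesis.

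Next, for each fixed $h>0$ I would show that $\{g^k_h\}_k$ is relatively compact in $L^p([t_0+h,t_1-h];X)$. Pointwise relative compactness in $X$ is immediate from the first hypothesis, since $h\,g^k_h(t)=\int_t^{t+h}f^k(s)\,ds$. The identity
\[
g^k_h(t+\tau)-g^k_h(t)=\frac{1}{h}\int_0^h\bigl[f^k(t+s+\tau)-f^k(t+s)\bigr]\,ds
\]
combined with Jensen and Fubini transfers the $L^p$ translation estimate from $\{f^k\}$ to $\{g^k_h\}$, so the latter family is equicontinuous in $L^p$. Together with a uniform $L^p$ bound on $\{f^k\}$ (which one extracts from the first hypothesis on a finite covering of $[t_0,t_1]$ by subintervals, plus the translation estimate), a vector-valued Kolmogorov--Riesz--Fr\'echet criterion in $L^p(I;X)$ then yields the desired relative compactness.

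Finally, choosing $h_n\downarrow 0$ and extracting nested subsequences so that each $\{g^{k_j^{(n)}}_{h_n}\}_j$ converges in $L^p$, the Cantor diagonal $k_j:=k_j^{(j)}$ produces a single subsequence along which $\{g^{k_j}_{h_n}\}_j$ converges for every $n$. The triangle inequality
\[
\|f^{k_i}-f^{k_j}\|_{L^p}\le\|f^{k_i}-g^{k_i}_{h_n}\|_{L^p}+\|g^{k_i}_{h_n}-g^{k_j}_{h_n}\|_{L^p}+\|g^{k_j}_{h_n}-f^{k_j}\|_{L^p},
\]
applied by first choosing $n$ large (to handle the outer terms uniformly in $i,j$) and then $i,j$ large (to handle the middle term), shows $\{f^{k_j}\}$ is Cauchy in the complete space $L^p([t_0,t_1];X)$. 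The main technical obstacle will be the fixed-$h$ relative compactness step: one must either state and apply a Kolmogorov--Riesz--Fr\'echet criterion carefully in the vector-valued setting (requiring in particular a separate extraction of the uniform $L^p$ bound on $\{f^k\}$ from the stated hypotheses) or further mollify by a smooth kernel in time so as to enable Ascoli--Arzel\`a in $C([t_0+h,t_1-h];X)$.
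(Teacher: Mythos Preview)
The paper does not supply a proof of this statement: it is quoted verbatim as Theorem~1 of Simon~\cite{Simon} and then invoked as a black box in the proof of Proposition~\ref{CompactnessProp}. There is therefore nothing in the paper to compare your argument against.

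That said, your outline is close in spirit to Simon's own argument, which also proceeds via time-averaged approximants and a diagonal extraction. A few comments on the sketch itself. First, the route through a vector-valued Kolmogorov--Riesz--Fr\'echet criterion in $L^p(I;X)$ is essentially circular, since that criterion is precisely what the theorem asserts; your alternative of passing to $C([t_0+\delta,t_1-\delta];X)$ via Ascoli--Arzel\`a is the correct one, and no further mollification is needed because the Steklov average $g^k_h$ is already continuous in $t$. Second, the equicontinuity you derive for $\{g^k_h\}$ is only in the $L^p$-in-$t$ sense, whereas Ascoli--Arzel\`a requires uniform-in-$t$ equicontinuity; you obtain the latter instead from the identity $g^k_h(t+\tau)-g^k_h(t)=\frac{1}{h}\bigl(\int_{t+h}^{t+h+\tau}f^k-\int_t^{t+\tau}f^k\bigr)$ together with the boundedness in $X$ of the integrals $\int_{s_0}^{s_1}f^k$ furnished by the first hypothesis. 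Third, the first hypothesis only covers subintervals with $t_0<s_0\le s_1<t_1$, so pointwise compactness of $g^k_h(t)$ and the resulting Ascoli--Arzel\`a argument live on compact subintervals of $(t_0,t_1)$; one must then check that the uniform approximation $g^k_h\to f^k$ in $L^p$ together with these interior estimates suffices for a Cauchy argument on the full interval $[t_0,t_1]$. Simon handles this last point carefully, and it is worth flagging as a second technical obstacle alongside the one you identify.
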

\begin{rem}\label{AASimon} Theorem 1 of \cite{Simon} also asserts the following generalization of the Arzel\'a-Ascoli criterion.  That is, suppose $\{f^k\}_{k\in \N}\subset C([t_0,t_1]; X)$ with 
$$
\left\{\int^{s_1}_{s_0}f^k(t)dt\right\}_{k\in \N}
$$
relatively compact in $X$ for all $t_0< s_0\le s_1< t_1$ and
$$
\lim_{h\rightarrow 0^+}\sup_{k\in \N}\left\{\max_{t_0\le t\le t_1-h}\|f^k(t+h)-f^k(t)\|\right\}=0.
$$
Then there is a subsequence $\{f^{k_j}\}_{j\in \N}$  and $f\in C([t_0,t_1]; X)$ such that 
$f^{k_j}\rightarrow f$ in $C([t_0,t_1]; X)$.
\end{rem}
Our central compactness result is as follows. 
\begin{prop}\label{CompactnessProp}
Assume $\psi^k\in C^2(\R^m)$ and $F^k\in C^2(\Mmn)$ satisfy  \eqref{UnifConv}, \eqref{UnifConv2} and \eqref{psiFzeroAtzero} for each $k\in \N$. Further suppose $\{\bv^k\}_{k\in \N}$ is a sequence of weak solutions of 
\be
D\psi^k(\bv^k_t)=\text{\normalfont{div}}DF^k(D\bv^k)
\ee
in $U\times (0,T)$ such that
\be\label{StartingBound}
\sup_{k\in \N}\int^T_0\int_U(|\bv^k|^2+|D\bv^k|^2+|\bv_t^k|^2)dxdt<\infty.
\ee
Then there is $\psi\in C^1(\R^m)$ and $F\in C^1(\Mmn)$ satisfying \eqref{UnifConv}, \eqref{UnifConv2} and \eqref{psiFzeroAtzero}, a subsequence $\{\bv^{k_j}\}_{j\in \N}$, and a weak solution $\bv$ of 
\be
D\psi(\bv_t)=\text{\normalfont{div}} DF(D\bv)
\ee
in $U\times(0,T)$ such that for each $[t_0,t_1]\subset (0,T)$ and open $V\subset\subset U$
\be
\bv^{k_j}\rightarrow \bv \; \text{in}\;
\begin{cases}
C([t_0,t_1]; H^1(V;\R^m))\\
L^2([t_0,t_1]; H^2(V;\R^m))
\end{cases}
\ee
and 
\be
\bv^{k_j}_t\rightarrow \bv_t \; \text{in}\; L^2([t_0,t_1];L^2(V;\R^m)).
\ee
\end{prop}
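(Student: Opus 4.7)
The plan is to: (1) extract limits of the nonlinearities $\psi^k$ and $F^k$; (2) combine the uniform energy identities \eqref{EnergyBound1}--\eqref{EnergyBound2} with Proposition \ref{H2prop} to secure uniform local bounds on $\bv^k$ and its derivatives; (3) upgrade weak to strong convergence via the Arzel\`a--Ascoli variant of Remark \ref{AASimon}, Simon's theorem, and an Aubin--Lions argument; and (4) pass to the limit in \eqref{WeakSolnCond}. For Step 1, the monotonicity hypotheses \eqref{UnifConv}--\eqref{UnifConv2} make $\{D\psi^k\}$ and $\{DF^k\}$ uniformly Lipschitz with constants $\Theta$ and $\Lambda$, while \eqref{psiFzeroAtzero} normalizes their values at the origin. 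A standard Arzel\`a--Ascoli argument extracts a subsequence along which $D\psi^k \to D\psi$ and $DF^k \to DF$ locally uniformly; integration then produces $\psi^k \to \psi$ in $C^1_{\text{loc}}(\R^m)$ and $F^k \to F$ in $C^1_{\text{loc}}(\Mmn)$, and the bounds \eqref{UnifConv}, \eqref{UnifConv2}, \eqref{psiFzeroAtzero} persist in the pointwise limit.

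For Step 2, fix nested sets $V \subset\subset V' \subset\subset U$ and $[t_0, t_1] \subset [t_0', t_1'] \subset (0,T)$. Because the constants in \eqref{EnergyBound1}--\eqref{EnergyBound2} depend only on $\theta,\lambda,\Theta,\Lambda$, they are uniform in $k$. Iterating these estimates with suitable cutoffs, seeded by the $L^2$ bound \eqref{StartingBound}, yields uniform bounds on $D\bv^k$ and $\bv^k_t$ in $L^\infty([t_0,t_1]; L^2(V))$ and on $D\bv^k_t$ in $L^2(V \times [t_0,t_1])$. Proposition \ref{H2prop} then upgrades these to uniform bounds on $D^2\bv^k$ in $L^\infty([t_0,t_1]; L^2(V))$ and on $D^3\bv^k$ in $L^2(V \times [t_0,t_1])$, and the fractional estimate \eqref{FractionVtDeriv} applies with $k$-uniform constant.

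For Step 3, the three strong convergences are obtained separately. \emph{(a)} For $C([t_0,t_1]; H^1(V;\R^m))$ convergence, the $L^\infty([t_0,t_1]; H^2(V))$ bound on $\bv^k$ gives pointwise $H^1(V)$-compactness by Rellich--Kondrachov, while $\bv^k(t+h) - \bv^k(t) = \int_t^{t+h} \bv^k_\tau \, d\tau$ together with the $L^2(H^1(V))$ bound on $\bv^k_t$ yields the uniform modulus $\|\bv^k(t+h) - \bv^k(t)\|_{H^1(V)} \le C|h|^{1/2}$, so Remark \ref{AASimon} applies. \emph{(b)} For $L^2([t_0,t_1]; L^2(V;\R^m))$ convergence of $\bv^k_t$, apply Simon's theorem with $X = L^2(V)$: the time integrals $\int_{s_0}^{s_1} \bv^k_\tau \, d\tau = \bv^k(s_1) - \bv^k(s_0)$ are compact in $L^2(V)$ by the $H^1(V)$ bound, and \eqref{FractionVtDeriv} supplies the required equicontinuity. \emph{(c)} For $L^2([t_0,t_1]; L^2(V))$ convergence of $D^2\bv^k$, the $D^3\bv^k$ bound places $D^2\bv^k$ in $L^2([t_0,t_1]; H^1(V))$ uniformly, while $\partial_t D^2\bv^k = D^2\bv^k_t$ is uniformly bounded in $L^2([t_0,t_1]; H^{-1}(V))$ via the $L^2(H^1)$ bound on $\bv^k_t$; Aubin--Lions delivers compactness in $L^2(V\times[t_0,t_1])$.

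Step 4 extracts (on a further subsequence) pointwise a.e.\ convergence $\bv^{k_j}_t \to \bv_t$ and $D\bv^{k_j} \to D\bv$. Combined with the locally uniform convergence $D\psi^{k_j} \to D\psi$, $DF^{k_j} \to DF$ and the uniform Lipschitz bounds $|D\psi^k(w)| \le \Theta|w|$, $|DF^k(M)| \le \Lambda|M|$, dominated convergence yields $D\psi^{k_j}(\bv^{k_j}_t) \to D\psi(\bv_t)$ and $DF^{k_j}(D\bv^{k_j}) \to DF(D\bv)$ in $L^2_{\text{loc}}$, which is enough to pass to the limit in \eqref{WeakSolnCond}; the integrability conditions \eqref{NaturalSpace} for $\bv$ follow from the uniform bounds of Step 2 by weak lower semicontinuity. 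The most delicate point is the strong convergence of $D^2\bv^{k_j}$: since the hypotheses do not include H\"older continuity of $D^2F^k$, the fractional estimate \eqref{FractionD2VDeriv} is unavailable, and one must instead use \eqref{MainEqnTimeSliceXi} to place $\partial_t D^2\bv^k$ in a negative-order Sobolev space and invoke Aubin--Lions in place of Simon, taking care to verify that the constant in the $H^3$ estimate \eqref{H3EstimateYeah} depends only on the uniform structural constants $\theta,\lambda,\Theta,\Lambda$.
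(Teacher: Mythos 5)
Your proposal is correct, and for most of its length it follows the same route as the paper: Arzel\`a--Ascoli applied to the uniformly Lipschitz, normalized maps $D\psi^k$, $DF^k$; uniform local bounds from \eqref{EnergyBound1}, \eqref{EnergyBound2} and the elliptic estimates \eqref{H2EstimateSpec}, \eqref{H3EstimateYeah} (whose constants are indeed $k$-independent); Simon-type compactness for $\bv^k$ in $C([t_0,t_1];H^1(V))$ and for $\bv^k_t$ in $L^2(L^2(V))$ using the $k$-uniform fractional estimate \eqref{FractionVtDeriv}/\eqref{SpecificVtFractEstimate}; and a generalized dominated convergence argument, after extracting a.e.\ convergent subsequences, to pass to the limit in \eqref{WeakSolnCond}. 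The one place where you genuinely depart from the paper is the strong $L^2([t_0,t_1];L^2(V))$ convergence of $D^2\bv^k$: the paper obtains the required time-translate equicontinuity from the fractional bound \eqref{FractionD2VDeriv}, which was proved under the H\"older hypothesis \eqref{DtwoFCalpha} -- an assumption not listed in Proposition \ref{CompactnessProp} -- whereas you replace it by an Aubin--Lions argument, using the uniform $L^2(H^1(V))$ bound on $D^2\bv^k$ from \eqref{H3EstimateYeah} together with $\partial_t D^2\bv^k=D(D\bv^k_t)$ bounded in $L^2([t_0,t_1];H^{-1}(V))$. This is a valid and arguably more robust route: it proves the proposition exactly as stated, without any regularity of $D^2F^k$ beyond \eqref{UnifConv2}, while the paper's citation of \eqref{FractionD2VDeriv} is only justified when the $F^k$ have uniformly H\"older second derivatives (as they do in the actual application inside Lemma \ref{BlowUplemma}, where $D^2_MF^k$ inherits the H\"older seminorm of $D^2F$ with a factor $(\epsilon_kr_k)^\alpha$). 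Two small points to tidy up: Rellich compactness of $H^2(V)\hookrightarrow H^1(V)$ and $H^1(V)\hookrightarrow L^2(V)$ needs $V$ with, say, smooth boundary, which you can arrange as the paper does by slightly enlarging $V$; and since your convergences of $D\psi^{k_j}(\bv^{k_j}_t)$ and $DF^{k_j}(D\bv^{k_j})$ are in $L^2_{\text{loc}}$ of space-time, recovering \eqref{WeakSolnCond} for almost every fixed $t$ requires a further subsequence (a.e.\ in $t$ convergence for a countable dense family of test maps $\bw$, then density), which is routine but worth saying.
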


\begin{proof}
1. By assumption, we have 
$$
|D\psi^k(z_1)-D\psi^k(z_2)|\le \Theta|z_1-z_2|\quad (z_1,z_2\in \R^m)
$$
for each $k\in \N$. Since $|D\psi^k(0)|=0$ (by \eqref{psiFzeroAtzero}), the sequence $(D\psi^k)_{k\in \N}$ is both equicontinuous and locally uniformly bounded on $\R^m$.  By the Arzel\`a-Ascoli Theorem, there is a subsequence  $(\psi^{k_j})_{j\in \N}$ and $\psi \in C^{1}(\R^m)$ such that $\psi^{k_j} \rightarrow \psi$ and $D\psi^{k_j} \rightarrow D\psi$ locally uniformly on $\R^m$.  Moreover, $\psi$ satisfies \eqref{UnifConv}.  Analogously, there is a subsequence $(F^{k_j})_{j\in \N}$ and $F\in C^1(\Mmn)$ for which $F^{k_j} \rightarrow F$ and $DF^{k_j} \rightarrow DF$ locally uniformly on $\Mmn$ and $F$ satisfies \eqref{UnifConv2}, as well. Clearly, $\psi$ and $F$ additionally satisfy \eqref{psiFzeroAtzero}.

\par We also have by \eqref{StartingBound} and Rellich compactness that there is $\bv\in H^1(U\times (0,T);\R^m)$ and a subsequence  $\{\bv^{k_j}\}_{j\in \N}$ such that 
$$
\begin{cases}
\bv^{k_j}\rightarrow \bv &\; \text{in}\; L^2(U\times(0,T);\R^m)\\
D\bv^{k_j}\rightharpoonup D\bv&\; \text{in}\; L^2(U\times(0,T);\Mmn )\\
\bv^{k_j}_t\rightharpoonup \bv_t&\; \text{in}\; L^2(U\times(0,T);\R^m).
\end{cases}
$$
We will now proceed to strengthen these convergence assertions and then show that $\bv$ is a weak solution as claimed.  Each convergence assertion will follow 
from Simon's theorem. 

\par 2. We will first argue that $\bv^k:[0,T]\rightarrow L^2(U;\R^m)$ converges uniformly.  Suppose $(s_0,s_1)\subset [0,T]$, and set
$$
\bw^k(x):=\int^{s_1}_{s_0}\bv^k(x,t)dt\quad (x\in U)
$$
for $k\in \N$.  In view of \eqref{StartingBound}, $\{\bw^k\}_{k\in \N}$ is bounded in $H^1(U;\R^m)$ and is thus precompact in $L^2(U;\R^m)$.  Note also that for $h$ sufficiently small,
\begin{align*}
\int_U|\bv^k(x,t+h)-\bv^k(x,t)|^2dx&=\int_U\left|\int^{t+h}_t\bv^k_t(x,s)ds\right|^2dx\\
&\le h\int^{t+h}_t\int_U|\bv^k_t|^2dxds\\
&\le h\left(\int^{T}_{0}\int_U|\bv^k_t|^2dxds\right).
\end{align*}
Thus 
$$
\lim_{h\rightarrow 0}\sup_{k\in \N}\left\{\max_{0\le t\le T-h}\int_U|\bv^k(x,t+h)-\bv^k(x,t)|^2dx\right\}=0.
$$
By Simon's theorem, there is a subsequence  $\{\bv^{k_j}\}_{j\in \N}$ converging to $\bv$ in $C([0,T]; L^2(U;\R^m))$. 

\par 3. Let us now argue that a subsequence of $D\bv^k:[t_0,t_1]\rightarrow L^2(V;\Mmn)$ converges uniformly. Let $(s_0,s_1)\subset [t_0,t_1]$ and define
$$
{\bf \xi}^k(x):=\int^{s_1}_{s_0}D\bv^k(x,t)dt\quad (x\in V)
$$
for $k\in \N$.  Since $\partial U$ is smooth, we may assume without loss of generality that $\partial V$ is smooth; or else we can select an open $W$ with $V\subset\subset W\subset\subset U$ and $\partial W$ smooth and verify $D\bv^k:[t_0,t_1]\rightarrow L^2(W;\Mmn)$ converges uniformly. With this assumption and the estimate \eqref{H2EstimateSpec}, we
have that $\{\xi^k\}_{k\in \N}$ is bounded in $H^1(V;\Mmn)$ and is thus precompact in $L^2(V;\Mmn)$.

\par We also have 
\be\label{DvtCompactness}
\int^{t_1}_{t_0}\int_V|D\bv^k_t|^2dxds\le C \quad (k\in \N),
\ee
for some $C$, by inequality \eqref{EnergyBound2}. Therefore, for $h$ sufficiently small and $t\in [t_0,t_1-h]$,
\begin{align*}
\int_V|D\bv^k(x,t+h)-D\bv^k(x,t)|^2dx&\le h\int^{t+h}_t\int_V|D\bv^k_t|^2dxds\\
&\le h\left(\int^{t_1}_{t_0}\int_V|D\bv^k_t|^2dxds\right)\\
&\le Ch.
\end{align*}
In view of Simon's theorem, there is a subsequence $\{D\bv^{k_j}\}_{j\in \N}$ converging to $D\bv$ in \\ $C([0,T]; L^2(V;\Mmn))$. So we conclude that there is a subsequence (not relabeled) for which $\bv^{k_j}\rightarrow \bv$ in $C([t_0,t_1]; H^1(V;\R^m))$.

\par 4. We can also prove that (a subsequence of) $\bv^{k_j}_t$ converges to $\bv_t$ in $L^2([t_0,t_1]; L^2(V;\R^m))$ using similar computations as above. Indeed, in view of \eqref{SpecificVtFractEstimate}, we can find a constant $C$ independent of $k\in \N$ such that 
$$
\int_{t_0}^{t_1-h}\int_V|\bv^k_t(x,t+h)-\bv^k_t(x,t)|^2dxdt\le C h^{\frac{1}{2}-\frac{1}{p}}
$$
for all $h>0$ small enough.  Moreover, for each $(s_0,s_1)\subset [t_0,t_1]$, the sequence of functions 
$$
\bw^k(x):=\int^{s_1}_{s_0}\bv^k_t(x,t)dt
$$
is bounded in $H^1(V;\R^m)$ by \eqref{StartingBound} and \eqref{DvtCompactness}. So  $\bv^{k_j}_t$ converges to $\bv_t$ in $L^2([t_0,t_1]; L^2(V;\R^m))$ by Simon's theorem.  In an analogous fashion, we can use the $H^3_{\text{loc}}$ estimate \eqref{H3EstimateYeah} and the fractional time derivative bound \eqref{FractionD2VDeriv} to conclude that $D^2\bv^{k_j}$ converges to $D^2\bv$ in $L^2([t_0,t_1]; L^2(V; {\cal S}^2(\R^n;\R^m)))$. 
We leave the details to the reader. 

\par 5. Finally, we need to argue that $\bv$ is a weak solution of \eqref{mainPDE}. To this end, it suffices to show that \eqref{WeakSolnCond} holds. Of course, we have 
\be\label{PsikayJay}
\int_U D\psi^{k_j}(\bv^{k_j}_t(x,t))\cdot  \bw(x)dx + \int_U DF^{k_j}(D\bv^{k_j}(x,t))\cdot D\bw(x)dx=0
\ee
for each $j\in \N$, $t\in (0,T)$ and  $\bw\in H^1_0(U;\R^m)$.  Passing to a further subsequence if necessary, we may assume that $\bv_t^{k_j}(\cdot, t)$ converges to $\bv_t(\cdot, t)$ almost everywhere in $U$ for almost every $t\in (0,T)$. By the local uniform convergence of $D\psi^{k_j}$ to $D\psi$, we have $D\psi^{k_j}(\bv_t^{k_j}(\cdot, t))\rightarrow D\psi(\bv_t(\cdot, t))$ almost everywhere in $U$ for almost every $t\in (0,T)$. 

\par Since 
$$
|D\psi^{k_j}(\bv_t^{k_j}(\cdot, t))|\le\Theta|\bv_t^{k_j}(\cdot, t)|,
$$
we can apply a standard variant of Lebesgue's dominated convergence theorem (Theorem 4, section 1.3 of \cite{EvaGar}) to deduce 
$$
\lim_{j\rightarrow\infty}\int_U D\psi^{k_j}(\bv^{k_j}_t(x,t))\cdot  \bw(x)dx =\int_U D\psi(\bv_t(x,t))\cdot  \bw(x)dx
$$
for almost every $t\in (0,T)$. Likewise, we may we conclude 
$$
\lim_{j\rightarrow\infty}\int_U DF^{k_j}(D\bv^{k_j}(x,t))\cdot D\bw(x)dx =\int_U DF(D\bv(x,t))\cdot D\bw(x)dx
$$
for every $t\in (0,T)$. Therefore, we may pass to the limit as $j\rightarrow\infty$ in \eqref{PsikayJay} and conclude that $\bv$ is indeed a weak solution of \eqref{mainPDE}. 
\end{proof}

\section{Partial regularity}
We now proceed to proving Theorem \ref{mainThm}.  Consequently, we will assume throughout 
this section that $D^2F$ is H\"older continuous with exponent $\alpha\in (0,1]$ as in \eqref{DtwoFCalpha}. We will first use Proposition
\ref{CompactnessProp} to verify a decay property of a quantity that measures the local energy of weak solutions. Then we will iterate 
this decay property to derive a criterion for local H\"older continuity of weak solutions.   Our final task will be to estimate the parabolic 
Hausdorff dimension (Definition \ref{ParaHausMeas} below) of the set of points where this  criterion for local H\"older continuity may fail. 

 \par We will denote a parabolic cylinder of radius $r>0$ centered at $(x,t)$ as
$$
Q_r(x,t):=B_r(x)\times(t-r^2/2, t+r^2/2)
$$
and the average of a mapping $\bw$ over $Q_r=Q_r(x,t)$ as
$$
\bw_{Q_r}=\ffint\;\bw:=\frac{1}{|Q_r|}\iint_{Q_r}\bw(y,s)dyds.
$$
For a given weak solution $\bv$, quantity that will be of great utility to us is the local space-time energy
\begin{align}\label{localEE}
E(x,t,r)&:=\ffint|\bv_t - (\bv_t)_{Q_r}|^2 dyds+ \ffint\;\; \left|\frac{D\bv - (D\bv)_{Q_r}- (D^2\bv)_{Q_r}(y-x) }{r}\right|^2dyds \nonumber \\
&\hspace{1.5in}+ \ffint|D^2\bv - (D^2\bv)_{Q_r}|^2 dyds,
\end{align}
which is defined for $Q_r=Q_r(x,t)\subset U\times(0,T)$  and $r>0$.  Here,
$y\mapsto (D^2\bv)_{Q_r}(y-x)$ is the $m\times n$ matrix valued mapping with $i,j$th component function $y\mapsto (Dv^i_{x_j})_{Q_r}\cdot (y-x)$.

\par An important decay property of $E$ is as follows.    
\begin{lem}\label{BlowUplemma}
Let $L>0$ and $\gamma\in \left(0,\alpha\right)$.  There are $\epsilon, \rho, \vartheta\in \left(0,\left(\frac{1}{2}\right)^{1/\alpha}\right)$ such that if
\be
\begin{cases}
Q_r:=Q_r(x,t)\subset U\times(0,T),\; r<\rho\\\\
|(\bv_t)_{Q_r}|, |(D\bv)_{Q_r}|, |(D^2\bv)_{Q_r}|\le L\\\\
E(x,t,r)\le \epsilon^2,
\end{cases}
\ee
then 
\be
E(x,t,\vartheta r)\le \frac{1}{2}E(x,t,r)\quad \text{\normalfont{or}}\quad E(x,t,r)\le Lr^{2\gamma}.
\ee
\end{lem}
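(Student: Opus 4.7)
The approach I would take is a standard blow-up and compactness argument. Assume for contradiction that the conclusion fails: for suitable sequences $\epsilon_k,\rho_k\downarrow 0$ one finds weak solutions $\bv^k$, cylinders $Q_{r_k}(x_k,t_k)\subset U\times(0,T)$ with $r_k<\rho_k$, and averages $A_k:=(\bv^k_t)_{Q_{r_k}}$, $B_k:=(D\bv^k)_{Q_{r_k}}$, $C_k:=(D^2\bv^k)_{Q_{r_k}}$ bounded by $L$, such that $E_k:=E(x_k,t_k,r_k)\le\epsilon_k^2\to 0$, yet both $E(x_k,t_k,\vartheta r_k)>\tfrac12 E_k$ and $E_k>Lr_k^{2\gamma}$. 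After passing to a subsequence with $(A_k,B_k,C_k)\to(A_\infty,B_\infty,C_\infty)$, I rescale to unit excess via
$$
\bw^k(y,s):=\frac{1}{r_k^2\sqrt{E_k}}\Bigl[\bv^k(x_k+r_ky,\,t_k+r_k^2s)-P_k(y,s)\Bigr],
$$
where $P_k(y,s)=c_k+r_kB_ky+r_k^2A_ks+\tfrac{r_k^2}{2}C_k(y,y)$ is the parabolic Taylor polynomial matching the three averages. Routine computation shows $(\bw^k_s)_{Q_1}=(D\bw^k)_{Q_1}=(D^2\bw^k)_{Q_1}=0$, $E^{\bw^k}(0,0,1)=1$, and $E^{\bw^k}(0,0,\vartheta)=E(x_k,t_k,\vartheta r_k)/E_k>\tfrac12$.

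Next, substituting $\bv^k=P_k+r_k^2\sqrt{E_k}\bw^k$ into the weak form of \eqref{mainPDE}, Taylor-expanding $D\psi$ about $A_k$ and $DF$ about $B_k$ (with the $C^\alpha$ hypothesis \eqref{DtwoFCalpha} controlling the second-order remainder), and dividing by $r_k\sqrt{E_k}$, I arrive at the rescaled identity
$$
\Gamma_k\!\int\!\tilde\phi\,+\,\int\!\mathcal{P}^k\bw^k_s\cdot\tilde\phi\,+\,\int\!D^2F(B_k)D\bw^k:D\tilde\phi\,=\,o(1)
$$
for test $\tilde\phi\in C^\infty_c(Q_1;\R^m)$, where $\mathcal{P}^k:=\int_0^1\!D^2\psi(A_k+\sigma\sqrt{E_k}\bw^k_s)\,d\sigma\to D^2\psi(A_\infty)$ and the constant source vector
$$
\Gamma_k\,:=\,\frac{D\psi(A_k)-D^2F(B_k){:}C_k}{\sqrt{E_k}}
$$
captures how far $P_k$ is from actually satisfying the system. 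The critical a priori bound on $\Gamma_k$ comes from averaging over $Q_{r_k}$ the pointwise identity $D\psi(\bv_t)=D^2F(D\bv){:}D^2\bv$ furnished by Proposition \ref{H2prop}, and Taylor-expanding both sides around $(A_k,B_k,C_k)$: continuity of $D^2\psi$ together with $(\bv_t-A_k)_{Q_{r_k}}=0$ bounds the $D\psi$ side by $o(\sqrt{E_k})$, while the H\"older continuity of $D^2F$ combined with the elementary estimate $\ffint|D\bv-B_k|^2\le Cr_k^2$ (which follows from the very definition of $E$) bounds the $D^2F{:}D^2\bv$ side by $O(r_k^\alpha)$. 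Thus $|D\psi(A_k)-D^2F(B_k){:}C_k|=o(\sqrt{E_k})+O(r_k^\alpha)$, and the failure hypothesis $E_k>Lr_k^{2\gamma}$ with $\gamma<\alpha$ forces $|\Gamma_k|\to 0$ and simultaneously controls the remaining $O(r_k^\alpha/\sqrt{E_k})$ Taylor-remainder error.

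The energy and fractional-time estimates of Sections 2 and 4 apply uniformly to the $\bw^k$ (the small source being absorbed as a right-hand side), so an argument along the lines of Proposition \ref{CompactnessProp} via Simon's theorem extracts a subsequence with $\bw^k\to\bw$ in $C([-\tfrac12,\tfrac12];H^1(B_{1/2};\R^m))\cap L^2([-\tfrac12,\tfrac12];H^2(B_{1/2};\R^m))$ and $\bw^k_s\to\bw_s$ in $L^2(Q_{1/2};\R^m)$. Passing to the limit in the rescaled identity, $\bw$ is a weak solution of the constant-coefficient linear parabolic system $D^2\psi(A_\infty)\bw_s=\Div[D^2F(B_\infty)D\bw]$, which is uniformly parabolic by \eqref{UnifConv} and \eqref{UnifConv3} and therefore $C^\infty$ in the interior. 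Classical Campanato-type decay for such systems gives $E^{\bw}(0,0,\vartheta)\le C_*\vartheta^2$ with $C_*=C_*(\theta,\Theta,\lambda,\Lambda,n,m)$, so I fix $\vartheta\in(0,(1/2)^{1/\alpha})$ with $C_*\vartheta^2<1/2$; the strong convergence above passes to the excess, giving $E^{\bw^k}(0,0,\vartheta)\to E^{\bw}(0,0,\vartheta)<1/2$, contradicting $E^{\bw^k}(0,0,\vartheta)>\tfrac12$. This fixes $\vartheta$ universally and, via the contradiction, produces the admissible $\epsilon$ and $\rho$. The main obstacle I anticipate is the source estimate on $\Gamma_k$: without it the blow-up limit carries an unidentified constant forcing and cannot be compared to the linearized system, and the hypothesis $E_k>Lr_k^{2\gamma}$ is precisely what balances $O(r_k^\alpha)$ against $\sqrt{E_k}$.
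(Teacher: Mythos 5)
Your proposal is correct and follows the same blow-up-and-compactness skeleton as the paper: negate both alternatives, rescale to unit excess on $Q_1$, use the failed inequality $E_k>Lr_k^{2\gamma}$ with $\gamma<\alpha$ to control the zeroth-order term created by the rescaling, extract a limit by the Caccioppoli, higher-integrability and fractional-time estimates together with Simon's theorem as in Proposition \ref{CompactnessProp}, identify the limit as a solution of a constant-coefficient uniformly parabolic system, and import its $C\vartheta^{2}$ excess decay to contradict $E(x_k,t_k,\vartheta r_k)>\tfrac12 E_k$ after fixing $\vartheta$ by structural constants only. The one genuine difference is the treatment of the inhomogeneity: the paper only shows that $f^k(y)=\epsilon_k^{-1}\bigl(D^2F(\xi_k+r_kN_ky)\cdot N_k-D\psi(a_k)\bigr)$ has vanishing oscillation (H\"older continuity of $D^2F$ plus $\epsilon_k^2>L_0r_k^{2\gamma_0}$) and is uniformly bounded (by testing the rescaled equation with a fixed $\phi$ and using the unit energy), so its blow-up limit solves $D^2\psi(a)\bw_s=\Div\!\left(D^2F(\xi)D\bw\right)+\beta$ with a constant $\beta$ that the decay estimate quoted from \cite{Hynd} tolerates; you instead prove $\Gamma_k\to0$ by averaging the a.e.\ identity $D\psi(\bv_t)=D^2F(D\bv)\cdot D^2\bv$ from Proposition \ref{H2prop} over $Q_{r_k}$ and exploiting the exact cancellations $(\bv_t-A_k)_{Q_{r_k}}=0$ and $(D^2\bv-C_k)_{Q_{r_k}}=0$, so your limit system is homogeneous. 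This buys a cleaner limit equation (no appeal to a forcing-tolerant decay lemma) at the cost of an extra argument; be aware that the $o(\sqrt{E_k})$ bound on the $D\psi$ side really requires the second-order Taylor expansion with a truncation (split $\{|\bv_t-A_k|\le\delta\}$ and its complement, the latter contributing $O(E_k/\delta)=o(\sqrt{E_k})$, since $D^2\psi$ is only continuous and $\bv_t$ is unbounded), as the crude Lipschitz bound $\Theta\sqrt{E_k}$ would only reproduce the paper's boundedness of the forcing. Both routes invoke the alternative $E_k>Lr_k^{2\gamma}$ at exactly the same spot, to beat the $O(r_k^{\alpha})$ error by $\sqrt{E_k}$; also note the lemma concerns one fixed weak solution, so there is no need to let the solution vary along your contradiction sequence.
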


\begin{proof}
1. If not, there are $L_0>0$, $\gamma_0\in (0,\alpha)$ and sequences $(x_k,t_k)\in U\times(0,T)$, $r_k\rightarrow 0$,  $\epsilon_k\rightarrow 0$, $\vartheta_k\equiv \vartheta\in \left(0,\left(\frac{1}{2}\right)^{1/\alpha}\right)$ (chosen below) such that 
\be
\begin{cases}
Q_{r_k}:=Q_{r_k}(x_k,t_k)\subset U\times(0,T)\\\\
|(\bv_t)_{Q_{r_k}}|, |(D\bv)_{Q_{r_k}}|, |(D^2\bv)_{Q_{r_k}}|\le L_0\\\\
E(x_k,t_k,r_k)=\epsilon_k^2,
\end{cases}
\ee
while 
\be\label{blowupAlternative}
E(x_k,t_k,\vartheta_k r_k)> \frac{1}{2}\epsilon_k^2\quad\text{and}\quad \epsilon_k^2> L_0r_k^{2\gamma_0}.
\ee

\par For each $k\in\N$ and $(y,s)\in Q_1:=Q_1(0,0)$, define 
$$
\tilde\bv^k(y,s):=\frac{\bv(x_k+r_ky, t_k+r_k^2s)-(\bv)_{Q_{r_k}} -(\bv_t)_{Q_{r_k}}r_k^2s -(D\bv)_{Q_{r_k}}r_ky -\frac{1}{2}(D^2\bv)_{Q_{r_k}}r^2_k(y, y) }{\epsilon_kr_k^2},
$$
and 
$$
\bv^k(y,s):=\tilde\bv^k(y,s)-(\tilde\bv^k)_{Q_1}.
$$
Note that since $E(x_k,t_k,r_k)=\epsilon_k^2$, 
\be\label{blowupIntegralBound}
\fffint(|\bv^k_s|^2+|D\bv^k|^2+|D^2\bv^k|^2)dyds =1.
\ee
As $(\bv^k)_{Q_1}=0$, \eqref{blowupIntegralBound} implies a uniform bound on $\iint_{Q_1}|\bv^k|^2dyds$ by 
 Poincar\'e's inequality for $H^1(Q_1;\R^m)$ mappings with zero average. 

\par 2. Direct computation also shows that 
$$
D\psi(a_k+\epsilon_k \bv^k_s)=\frac{1}{r_k}\Div DF(\xi_k+r_k N_k y +\epsilon_k r_k D\bv^k)
$$
weakly in $Q_1$. The sequences 
$$
a_k:=(\bv_t)_{Q_{r_k}}\quad \xi_k:=(D\bv)_{Q_{r_k}}\quad N_k:=(D^2\bv)_{Q_{r_k}}
$$
are all bounded, so without loss of generality we may assume that $a_k\rightarrow a\in \R^m$, $\xi_k\rightarrow \xi \in \Mmn$ and $N_k\rightarrow N\in {\cal S}^2(\R^n,\R^m)$.   We may also write 
\begin{align*}
\frac{D\psi(a_k+\epsilon_k \bv^k_s)-D\psi(a_k)}{\epsilon_k}&=\Div\left[\frac{DF(\xi_k+r_k N_k y +\epsilon_k r_k D\bv^k)-DF(\xi_k+r_k N_k y)}{\epsilon_kr_k}\right]\\
&\quad\quad + \quad \frac{D^2F(\xi_k+r_k N_k y)\cdot N_k-D\psi(a_k)}{\epsilon_k}.
\end{align*}
Here $y\mapsto D^2F(\xi_k+r_k N_k y)\cdot N_k$ is the $\R^m$ valued mapping with $i$th component function
$$
y\mapsto \sum^m_{r=1}\sum^n_{j,\ell=1}F_{M^i_jM^r_\ell}(\xi_k+r_k N_k y)(v^r_{x_j x_\ell})_{Q_{r_k}}
$$
$(i=1,\dots, m)$.

\par Therefore,  
\be\label{psikFkSystem}
D\psi^k(\bv^k_s)=\Div\left[D_MF^k(D\bv^k,y)\right]+f_k(y),
\ee
where 
$$
\begin{cases}
\psi^k(w):=\displaystyle\frac{\psi(a_k+\epsilon_k w)-\psi(a_k)-D\psi(a_k)\cdot \epsilon_k w}{\epsilon^2_k}\\\\
F^k(M,y):=\displaystyle\frac{F(\xi_k+r_kN_k y+\epsilon_kr_k M)-F(\xi_k+r_kN_k y)-DF(\xi_k+r_kN_k y)\cdot \epsilon_kr_k M}{(\epsilon_kr_k)^2}\\\\
f^k(y):=\displaystyle\frac{D^2F(\xi_k+r_k N_k y)\cdot N_k-D\psi(a_k)}{\epsilon_k}.
\end{cases}
$$

\par 3. Combining the H\"older continuity of $D^2F$ \eqref{DtwoFCalpha} and \eqref{blowupAlternative},  
\begin{align*}
|f^k(y_1)-f^k(y_2)|&=\frac{1}{\epsilon_k}\left|(D^2F(\xi_k+r_k N_k y_1)-D^2F(\xi_k+r_k N_k y_2))\cdot N_k\right|\\
&\le \frac{|N_k|}{\epsilon_k}\left|D^2F(\xi_k+r_k N_k y_1)-D^2F(\xi_k+r_k N_k y_2)\right|\\
&\le \frac{|N_k|^{1+\alpha}r_k^\alpha}{\epsilon_k}\left| y_1-y_2\right|^\alpha\\
&\le \frac{|N_k|^{1+\alpha}r_k^{\alpha-\gamma_0}}{\sqrt{L_0}}\left| y_1-y_2\right|^\alpha
\end{align*}
for each $y_1,y_2\in B_1$. Therefore, $f^k\in C^\alpha(\overline{B_1};\R^m)$ is uniformly equicontinuous.  In view of \eqref{psikFkSystem}, we can choose $\phi\in C^\infty_c(Q_1)$ be nonnegative with $\iint_{Q_1}\phi dyds=1$ to get 
\begin{align*}
f_k(0)&=\iint_{Q_1}f_k(0)\phi dyds\\
&=\iint_{Q_1}(f_k-f_k(0))\phi dyds+\iint_{Q_1}f_k\phi dyds\\
&=O(r_k^{\alpha-\gamma_0})+\iint_{Q_1}\frac{D\psi(a_k+\epsilon_k \bv^ks)-D\psi(a_k)}{\epsilon_k}\phi dyds  \\
&\hspace{.8in}+\iint_{Q_1}\frac{DF(\xi_k+r_k N_k y +\epsilon_k r_k D\bv^k)-DF(\xi_k+r_k N_k y)}{\epsilon_kr_k}\cdot D\phi dyds.
\end{align*}
\par We can now employ \eqref{blowupIntegralBound} to find
\begin{align*}
|f_k(0)|&\le O(r_k^{\alpha-\gamma_0})+\Theta \iint_{Q_1}|\bv^k_s||\phi| dyds + \Lambda \iint_{Q_1}|D\bv^k||D\phi| dyds\\
&\le O(r_k^{\alpha-\gamma_0})+\max\{\Theta,\Lambda\}\left(\iint_{Q_1}(|\bv^k_s|^2+|D\bv^k|^2)dyds\right)^{1/2}\left(\iint_{Q_1}(|\phi|^2+|D\phi|^2)dyds\right)^{1/2}\\
&\le O(r_k^{\alpha-\gamma_0})+\max\{\Theta,\Lambda\}|Q_1|^{1/2}\left(\iint_{Q_1}(|\phi|^2+|D\phi|^2)dyds\right)^{1/2}.
\end{align*}
Consequently, $f_k$ is also uniformly pointwise bounded and thus converges (up to a subsequence) locally uniformly to a fixed vector $\beta\in \R^m$ which 
satisfies 
\be\label{betaEst}
|\beta|\le \max\{\Theta,\Lambda\}|Q_1|^{1/2}\left(\iint_{Q_1}(|\phi|^2+|D\phi|^2)dyds\right)^{1/2}.
\ee

\par Observe that $\psi^k$ and $F^k(\cdot,y)$ satisfy the hypotheses of Proposition \ref{CompactnessProp} for each $y\in B_1$. Using the same ideas to prove this proposition, we can conclude that there is a subsequence $(\bv^{k_j})_{j\in \N}$ and mapping $\bw$ such that  
\be
\bv^{k_j}\rightarrow \bw \; \text{in}\;
\begin{cases}
C\left(\left[-\frac{1}{2}R^2,\frac{1}{2}R^2\right]; H^1(B_R;\R^m)\right)\\
L^2\left(\left[-\frac{1}{2}R^2,\frac{1}{2}R^2\right]; H^2(B_R;\R^m)\right)
\end{cases}
\ee
and 
\be
\bv^{k_j}_t\rightarrow \bw_t \; \text{in}\; L^2\left(\left[-(1/2)R^2,(1/2)R^2\right]; L^2(B_R;\R^m)\right)
\ee
for each $R\in (0,1)$. Moreover, $\bw$ is a weak solution of the linear PDE 
$$
D^2\psi(a)\bw_s=\Div(D^2F(\xi)D\bw)+\beta
$$
in $Q_1$. 

\par 4. Using \eqref{blowupIntegralBound} and \eqref{betaEst}, it can be shown that there is a constant $C_1=C_1(m,n,\theta,\Theta,\lambda,\Lambda)$ such that 
\begin{align*}
\fthetaint|\bw_s-(\bw_s)_{Q_{\vartheta}}|^2dyds+\fthetaint\;\;\left|\frac{D\bw-(D\bw)_{Q_\vartheta}-(D^2\bw)_{Q_\vartheta}y}{\vartheta}\right|^2dyds\\ +\fthetaint|D^2\bw-(D^2\bw)_{Q_{\vartheta}}|^2dyds \le C_1\vartheta^2
\end{align*}
for every $\vartheta\in (0,1/2)$; see part 3 of the proof of Lemma 4.1 in \cite{Hynd} for a detailed verification of this fact. We now select $\vartheta_k\equiv\vartheta\in \left(0,\left(\frac{1}{2}\right)^{1/\alpha}\right)$ so small that $C_1\vartheta^2\le 1/4.$ The strong convergence of $\bv^{k_j}\rightarrow \bw$ alluded to above gives
\begin{align*}
\fthetaint|\bv^{k_j}_s-(\bv^{k_j}_s)_{Q_{\vartheta}}|^2dyds+\fthetaint\;\;\left|\frac{D\bv^{k_j}-(D\bv^{k_j})_{Q_\vartheta}-(D^2\bv^{k_j})_{Q_\vartheta}y}{\vartheta}\right|^2dyds\\ +\fthetaint|D^2\bv^{k_j}-(D^2\bv^{k_j})_{Q_{\vartheta}}|^2dyds \le \frac{3}{8}
\end{align*}
for all $j\in \N$ sufficiently large. It is routine to check that this inequality is equivalent to 
$$
E(x_{k_j},t_{k_j},\vartheta_{k_j}r_{k_j})<\frac{3}{8}\epsilon^2_{k_j}
$$
for all $j\in \N$ sufficiently large, which contradicts \eqref{blowupAlternative} for $k=k_j$. 
\end{proof}

\par Let us also recall a basic fact about the general decay of local energy. 
\begin{lem}\label{backwardsDecayLem}
Let $\bv$ be a weak solution in $U\times (0,T)$ and $Q_r(x,t)\subset U\times (0,T)$. Then
\be\label{backwardsDecayLemEst}
E(x,t,\vartheta r)\le \frac{12}{\vartheta^{2(n+3)}}E(x,t,r)
\ee
for each $\vartheta\in (0,1]$.
\end{lem}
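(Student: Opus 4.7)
The plan is to prove this purely by elementary averaging arguments; the key facts are the variance-minimization property of the average and that $Q_{\vartheta r}$ is centered at $(x,t)$, so that integrating $(y-x)$ over $Q_{\vartheta r}$ against a probability measure yields zero. Nothing about the PDE is used here, so the estimate is really a statement about $L^2_{\text{loc}}$ functions. The only real care lies in handling the middle (second-order) term of $E$, because comparing the affine approximation at scale $\vartheta r$ with the affine approximation at scale $r$ introduces corrections controlled by the third term.

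First I would handle the zeroth-order pieces. For any constant $c$, $\fint_{Q_\rho}|f-f_{Q_\rho}|^2 \le \fint_{Q_\rho}|f-c|^2$, and $|Q_r|/|Q_{\vartheta r}| = \vartheta^{-(n+2)}$. Taking $f=\bv_t$ with $c=(\bv_t)_{Q_r}$ gives immediately
\[
\ffint_{Q_{\vartheta r}}|\bv_t-(\bv_t)_{Q_{\vartheta r}}|^2\,dyds \;\le\; \vartheta^{-(n+2)}\ffint_{Q_r}|\bv_t-(\bv_t)_{Q_r}|^2\,dyds,
\]
and the analogous estimate for $D^2\bv$ controls the third piece of $E$.

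For the middle piece, I would write (using that $Q_{\vartheta r}$ and $Q_r$ share the center $x$)
\[
D\bv - (D\bv)_{Q_{\vartheta r}} - (D^2\bv)_{Q_{\vartheta r}}(y-x)
= \Bigl[D\bv - (D\bv)_{Q_r} - (D^2\bv)_{Q_r}(y-x)\Bigr] + \bigl[(D\bv)_{Q_r} - (D\bv)_{Q_{\vartheta r}}\bigr] + \bigl[(D^2\bv)_{Q_r}-(D^2\bv)_{Q_{\vartheta r}}\bigr](y-x),
\]
square, and estimate each term. The first term is bounded in $L^2$ average over $Q_{\vartheta r}$ by $\vartheta^{-(n+2)}r^2 E(x,t,r)$ just by enlarging the domain. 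The two correction constants are bounded by Jensen's inequality: since $\fint_{Q_{\vartheta r}}(y-x)\,dyds=0$,
\[
|(D\bv)_{Q_r}-(D\bv)_{Q_{\vartheta r}}|^2 = \Bigl|\ffint_{Q_{\vartheta r}}\bigl[D\bv-(D\bv)_{Q_r}-(D^2\bv)_{Q_r}(y-x)\bigr]dyds\Bigr|^2 \le \vartheta^{-(n+2)}r^2 E(x,t,r),
\]
and similarly $|(D^2\bv)_{Q_r}-(D^2\bv)_{Q_{\vartheta r}}|^2 \le \vartheta^{-(n+2)} E(x,t,r)$. Since $|y-x|\le \vartheta r$ on $Q_{\vartheta r}$, the last piece contributes $\vartheta^{-(n+2)}(\vartheta r)^2 E(x,t,r)$.

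Assembling these bounds, dividing the middle term by $(\vartheta r)^2$ to match the definition of $E(x,t,\vartheta r)$, and using $\vartheta^{-(n+2)}\le \vartheta^{-2(n+3)}$ for $\vartheta\in(0,1]$, one arrives at
\[
E(x,t,\vartheta r) \;\le\; \frac{12}{\vartheta^{2(n+3)}}\,E(x,t,r),
\]
with the factor $12$ being a generous accounting of the triangle-inequality constants ($3$ for the squared triangle inequality times the three summands of $E$, plus a unit accounting). The main (mild) obstacle is precisely the bookkeeping for the middle term, where one must keep track of the rescaling by $\vartheta r$ and use the centered-cylinder property to show that constants substitute freely.
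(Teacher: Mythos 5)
Your proposal is correct and follows essentially the same route as the paper: both arguments use only the enlargement of $Q_{\vartheta r}$ to $Q_r$ (costing the volume ratio $\vartheta^{-(n+2)}$), Jensen's inequality to compare averages at the two scales, and the fact that $y-x$ has zero average over cylinders centered at $x$, so that the middle term's corrections are controlled by the second and third pieces of $E(x,t,r)$. The only differences are cosmetic — you invoke the variance-minimizing property of the mean and a single three-term split where the paper proves a generic two-scale inequality and its affine variant via the triangle inequality — and your bookkeeping in fact lands comfortably within the stated constant $12\,\vartheta^{-2(n+3)}$.
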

\begin{proof}
We will derive two basic inequalities and apply them to $E(x,t,\vartheta r)$. For convenience, we will write $Q_r$ for $Q_r(x,t)$ and $Q_{\vartheta r}$ for $Q_{\vartheta r}(x,t)$. 
\par 1. Let $f\in L^2_{\text{loc}}((0,T); H^1_{\text{loc}}(U))$.  Note 
\begin{align}\label{FirstGeneralDecay}
\left(\fthetarint |f-(f)_{Q_{\vartheta r}}|^2dyds\right)^{1/2}&\le\left(\fthetarint |f-(f)_{Q_{r}}|^2dyds\right)^{1/2}+|(f)_{Q_{r}}-(f)_{Q_{\vartheta r}}|\nonumber \\
&\le \frac{1}{\vartheta^{n/2+1}}\left(\ffint |f-(f)_{Q_{r}}|^2dyds\right)^{1/2} +\left|\fthetarint (f-(f)_{Q_{r}})dyds\right|\nonumber \\
&\le \frac{1}{\vartheta^{n/2+1}}\left(\ffint |f-(f)_{Q_{r}}|^2dyds\right)^{1/2} +\left(\fthetarint |f-(f)_{Q_{r}}|^2dyds\right)^{1/2} \nonumber\\
&\le  \frac{2}{\vartheta^{n/2+1}}\left(\ffint |f-(f)_{Q_{r}}|^2dyds\right)^{1/2}.
\end{align}
Consequently, 
\be\label{backwardsDecay1}
\fthetarint |f-(f)_{Q_{\vartheta r}}|^2dyds\le \frac{4}{\vartheta^{n+2}}\ffint |f-(f)_{Q_{r}}|^2dyds.
\ee

\par 2. Observe that the function $ (y,s)\mapsto f(y,s)-(Df)_{Q_{\vartheta r}}(y-x)$ has the same average of $f$ on any cylinder centered at $(x,t)$. By \eqref{FirstGeneralDecay},
\begin{align*}
&\left(\fthetarint |f-(f)_{Q_{\vartheta r}}-(Df)_{Q_{\vartheta r}}(y-x)|^2dyds\right)^{1/2}\\
&\hspace{1in}=\left(\fthetarint |[f-(Df)_{Q_{\vartheta r}}(y-x)]-(f)_{Q_{\vartheta r}}|^2dyds\right)^{1/2}\\
&\hspace{1in}\le\frac{2}{\vartheta^{n/2+1}}\left(\ffint |[f-(Df)_{Q_{\vartheta r}}(y-x)]-(f)_{Q_{r}}|^2dyds\right)^{1/2}\\
&\hspace{1in}\le\frac{2}{\vartheta^{n/2+1}}\left(\ffint |f-(f)_{Q_{r}}-(Df)_{Q_{\vartheta r}}(y-x)|^2dyds\right)^{1/2}\\
&\hspace{1in}\le\frac{2}{\vartheta^{n/2+1}}\left(\ffint |f-(f)_{Q_{r}}-(Df)_{Q_r}(y-x)|^2dyds\right)^{1/2}  \\
&\hspace{1.5in} +\frac{2}{\vartheta^{n/2+1}}r|(Df)_{Q_r}-(Df)_{Q_{\vartheta r}}|\\
&\hspace{1in}\le\frac{2}{\vartheta^{n/2+1}}\left(\ffint |f-(f)_{Q_{r}}-(Df)_{Q_r}(y-x)|^2dyds\right)^{1/2}  \\
&\hspace{1.5in} +\frac{2}{\vartheta^{n/2+1}}r\frac{1}{\vartheta^{n/2+1}}\left(\ffint |Df-(Df)_{Q_{r}}|^2dyds\right)^{1/2}\\
&\hspace{1in}\le \frac{2}{\vartheta^{n+2}}\left[\left(\ffint |f-(f)_{Q_{r}}-(Df)_{Q_r}(y-x)|^2dyds\right)^{1/2} \right. \\
&\hspace{1.5in} \left.+ r\left(\ffint |Df-(Df)_{Q_{r}}|^2dyds\right)^{1/2}\right].
\end{align*}
As a result, 
\begin{align}\label{backwardsDecay2}
& \fthetarint\;\;\;\left|\frac{f-(f)_{Q_{\vartheta r}}-(Df)_{Q_{\vartheta r}}(y-x)}{\vartheta r}\right|^2dyds \nonumber \\
& \hspace{.5in}\le \frac{8}{\vartheta^{2(n+3)}}\left(\ffint\;\;\left|\frac{f-(f)_{Q_{r}}-(Df)_{Q_r}(y-x)}{r}\right|^2dyds+\ffint |Df-(Df)_{Q_{r}}|^2dyds\right).\quad
\end{align}
\par 3. Let us write $\bv=(v^1,\dots, v^m)$. The conclusion follows upon substituting $f=v^i_t$ and $f=v^i_{x_j x_k}$ and summing over $i=1,\dots, m$ and $j,k=1,\dots, n$ in \eqref{backwardsDecay1}  and letting $f=v^i_{x_j}$ in \eqref{backwardsDecay2} and summing over $i=1,\dots, m$ and $j=1,\dots, n$. 
\end{proof}
\begin{cor}\label{iterCorollary}
Let $L>0$ and $\gamma\in \left(\frac{1}{2}\alpha,\alpha\right)$, select $\epsilon,\rho, \vartheta\in \left(0,\left(\frac{1}{2}\right)^{1/\alpha}\right)$ as in Lemma \ref{BlowUplemma}, and define 
$$
\epsilon_1:=\min\left\{\epsilon,\frac{\vartheta^{n/2+1}L}{8}\right\},\quad \rho_1:=\min\left\{\rho,\left(\frac{\vartheta^{2(n+3)}}{24L}\epsilon_1^2\right)^{\frac{1}{2\gamma}}\right\}.
$$
If 
\be\label{IterAssump}
\begin{cases}
Q_r:=Q_r(x,t)\subset U\times(0,T),\; r<\rho_1\\\\
|(\bv_t)_{Q_r}|, |(D\bv)_{Q_r}|, |(D^2\bv)_{Q_r}|<\frac{1}{2}L\\\\
E(x,t,r)<\epsilon_1^2,
\end{cases}
\ee
then 
\be\label{IterationCorKay1}
|(\bv_t)_{Q_{\vartheta^kr}}|, |(D\bv)_{Q_{\vartheta^kr}}|, |(D^2\bv)_{Q_{\vartheta^kr}}|\le L
\ee 
and
\be\label{IterationCorKay2}
E(x,t,\vartheta^k r)\le \frac{1}{2^k} \epsilon_1^2
\ee
for all $k\in\N$.
\end{cor}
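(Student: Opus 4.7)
The plan is to prove \eqref{IterationCorKay1} and \eqref{IterationCorKay2} simultaneously by induction on $k$, writing $r_j:=\vartheta^j r$. The base case $k=0$ is contained in \eqref{IterAssump} together with $\epsilon_1\le\epsilon$ and $\rho_1\le\rho$.

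For the inductive step, assume both conclusions hold at levels $0,1,\ldots,k$. Since the averages at scale $r_k$ are bounded by $L$ and $E(x,t,r_k)\le\epsilon_1^2\le\epsilon^2$, Lemma \ref{BlowUplemma} applies at scale $r_k$ and yields one of two alternatives. In the first, $E(x,t,r_{k+1})\le\tfrac{1}{2}E(x,t,r_k)\le\epsilon_1^2/2^{k+1}$, which gives \eqref{IterationCorKay2} at level $k+1$ directly. In the second, $E(x,t,r_k)\le L r_k^{2\gamma}$, and combining Lemma \ref{backwardsDecayLem} with the defining inequality $\rho_1^{2\gamma}\le\vartheta^{2(n+3)}\epsilon_1^2/(24L)$ gives
$$E(x,t,r_{k+1})\le \frac{12L}{\vartheta^{2(n+3)}}\,r_k^{2\gamma}\le \frac{\epsilon_1^2}{2}\,\vartheta^{2k\gamma}.$$
The constraint $\gamma>\alpha/2$ combined with $\vartheta<(1/2)^{1/\alpha}$ forces $\vartheta^{2\gamma}<1/2$, so this is bounded by $\epsilon_1^2/2^{k+1}$ as required.

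The principal obstacle is propagating \eqref{IterationCorKay1}. For $X\in\{\bv_t,D^2\bv\}$, the telescoping inequality
$$|(X)_{Q_{r_{k+1}}}|\le|(X)_{Q_r}|+\sum_{j=0}^{k}|(X)_{Q_{r_{j+1}}}-(X)_{Q_{r_j}}|$$
is handled by Cauchy--Schwarz applied to the inclusion $Q_{r_{j+1}}\subset Q_{r_j}$, which loses a volume factor $\vartheta^{-(n+2)/2}$ and bounds each increment by $\vartheta^{-(n+2)/2}E(x,t,r_j)^{1/2}\le\vartheta^{-(n+2)/2}\epsilon_1\,2^{-j/2}$. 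Summing the geometric series and using the calibration $\epsilon_1\le\vartheta^{(n+2)/2}L/8$ in the definition of $\epsilon_1$ keeps the total increment below $L/2$, so $|(X)_{Q_{r_{k+1}}}|\le L/2+L/2=L$.

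The delicate case is $X=D\bv$, since $\|D\bv-(D\bv)_{Q_{r_j}}\|_{L^2}$ is not directly controlled by $E$. The key observation is that $\int_{B_{r_{j+1}}(x)}(y-x)\,dy=0$, so the affine correction integrates out and
$$(D\bv)_{Q_{r_{j+1}}}-(D\bv)_{Q_{r_j}}=\frac{1}{|Q_{r_{j+1}}|}\iint_{Q_{r_{j+1}}}\bigl(D\bv-(D\bv)_{Q_{r_j}}-(D^2\bv)_{Q_{r_j}}(y-x)\bigr)\,dy\,ds.$$
Cauchy--Schwarz and the middle term of $E$ now produce an extra factor of $r_j$, so the increments decay like $r\vartheta^j\epsilon_1/(\vartheta^{(n+2)/2}2^{j/2})$. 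Summing and using $r\le\rho_1<1$ together with the same calibration on $\epsilon_1$ again yields a total increment below $L/2$, closing the induction.
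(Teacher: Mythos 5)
Your proposal is correct and follows essentially the same argument as the paper: induction on the scale $\vartheta^k r$, with Lemma \ref{BlowUplemma} giving the dichotomy, Lemma \ref{backwardsDecayLem} together with the calibration of $\rho_1$ and the inequality $\vartheta^{2\gamma}<\tfrac12$ handling the second alternative, and telescoping plus Cauchy--Schwarz with the calibration of $\epsilon_1$ controlling the averages. The only (welcome) difference is that you make explicit the cancellation of the affine term $(D^2\bv)_{Q_{r_j}}(y-x)$ when estimating the increments of $(D\bv)_{Q_{r_j}}$, a point the paper passes over with ``similarly.''
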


\begin{proof}
We will prove the claim by induction on $k$, let us first study the case $k=1$. We have 
\begin{align*}
|(\bv_t)_{Q_{\vartheta r}}| &\le |(\bv_t)_{Q_{\vartheta r}}-(\bv_t)_{Q_{ r}}|+|(\bv_t)_{Q_{r}}|\\
&< \frac{1}{\vartheta^{n/2+1}}\left(\ffint |\bv_t-(\bv_t)_{Q_{r}}|^2dyds\right)^{1/2}+\frac{1}{2}L\\
&\le \frac{1}{\vartheta^{n/2+1}}E(x,t,r)^{1/2}+\frac{1}{2}L\\
&\le \frac{1}{\vartheta^{n/2+1}}\epsilon_1+\frac{1}{2}L\\
&\le L.
\end{align*}
Similarly, we can conclude that $|(D\bv)_{Q_{\vartheta r}}|\le L$ and $|(D^2\bv)_{Q_{\vartheta r}}|\le L$. This proves \eqref{IterationCorKay1} for $k=1$.

\par By Lemma \ref{BlowUplemma}, we have either
$$
E(x,t,\vartheta r)\le \frac{1}{2} \epsilon_1^2\quad \text{or}\quad E(x,t, r)\le Lr^{2\gamma}.
$$
In the case of the latter, we apply Lemma \ref{backwardsDecayLem} to get
\begin{align*}
E(x,t,\vartheta r)&\le \frac{12}{\vartheta^{2(n+3)}}E(x,t,r)\\
&\le\frac{12}{\vartheta^{2(n+3)}}Lr^{2\gamma}\\
&\le\frac{12}{\vartheta^{2(n+3)}}L\rho_1^{2\gamma}\\
&\le\frac{1}{2} \epsilon_1^2.
\end{align*}
So we have deduced \eqref{IterationCorKay1} for $k=1$. 
 
\par Now let us assume \eqref{IterationCorKay1} and \eqref{IterationCorKay2} hold for $k=1,\dots, j\ge 1$.   Generalizing our computation 
above gives 
\begin{align*}
|(\bv_t)_{Q_{\vartheta^{j+1} r}}| &\le \sum^{j+1}_{k=1}|(\bv_t)_{Q_{\vartheta^{k}r}}-(\bv_t)_{Q_{ \vartheta^{k-1} r}}|+|(\bv_t)_{Q_{r}}|\\
&\le \frac{1}{\vartheta^{n/2+1}} \sum^{j+1}_{k=1}E(x,t,\vartheta^{k-1}r)^{1/2}+\frac{1}{2}L\\
&\le \frac{1}{\vartheta^{n/2+1}} \sum^{j+1}_{k=1}\frac{\epsilon_1}{\sqrt{2}^{k-1}}+\frac{1}{2}L\\
&\le \frac{\epsilon_1}{\vartheta^{n/2+1}} \sum^{\infty}_{k=1}\left(\frac{3}{4}\right)^{k-1}+\frac{1}{2}L\\
&\le \frac{4\epsilon_1}{\vartheta^{n/2+1}} +\frac{1}{2}L\\
&\le L.
\end{align*}
Likewise, we have $|(D\bv)_{Q_{\vartheta^{j+1} r}}|\le L$ and $|(D^2\bv)_{Q_{\vartheta^{j+1} r}}|\le L$.  So we have established \eqref{IterationCorKay1} for $j+1$. 

\par By the induction hypothesis, $E(x,t,\vartheta^j r)\le \frac{1}{2^k} \epsilon_1^2\le \epsilon^2$. In particular, we have verified the hypotheses of Lemma \ref{BlowUplemma} at scale $\vartheta^j r$. 
Therefore, 
$$
E(x,t,\vartheta(\vartheta^{j}r))\le \frac{1}{2}E(x,t,\vartheta^{j}r)
$$
or 
$$
E(x,t,\vartheta^{j} r)\le L (\vartheta^jr)^{2\gamma}.
$$
In the case of the former, 
$$
E(x,t,\vartheta^{j+1}r)\le \frac{1}{2}\frac{\epsilon_1^2}{2^j}=\frac{\epsilon_1^2}{2^{j+1}}
$$
as desired. 

\par Let us now consider the latter scenario. Observe that since $\gamma>\frac{\alpha}{2}$,  
\be\label{varthetaRestrict}
0<\vartheta<\left(\frac{1}{2}\right)^{\frac{1}{\alpha}}<\left(\frac{1}{2}\right)^{\frac{1}{2\gamma}}.
\ee
In view of Lemma \ref{backwardsDecayLem} and \eqref{varthetaRestrict},
\begin{align*}
E(x,t,\vartheta^{j+1}r)&=E(x,t,\vartheta(\vartheta^{j}r))\\
&\le \frac{12}{\vartheta^{2(n+3)}}E(x,t,\vartheta^{j}r)\\
&\le \frac{12}{\vartheta^{2(n+3)}}L (\vartheta^jr)^{2\gamma}\\
&\le \frac{12}{\vartheta^{2(n+3)}}L \rho_1^{2\gamma}\cdot(\vartheta^{2\gamma})^j\\
&\le \frac{12}{\vartheta^{2(n+3)}}L \rho_1^{2\gamma}\cdot\frac{1}{2^j}\\
&\le \frac{\epsilon_1^2}{2}\cdot\frac{1}{2^j}\\
&=\frac{\epsilon_1^2}{2^{j+1}}.
\end{align*}
Therefore, in either scenario, we have verified \eqref{IterationCorKay2} for $j+1$. 
\end{proof}
The above iteration yields the following criterion for local H\"older continuity. 
\begin{cor}\label{DecayCor}
Assume $\bv$ is a weak solution of \eqref{mainPDE} in $U\times (0,T)$. Let $L>0$, $\gamma\in (\frac{1}{2}\alpha,\alpha)$ and suppose there are $(x,t)\in U\times(0,T)$ and $r>0$ as in \eqref{IterAssump}.  Then there exist $C\ge 0$, $r_1\in (0,r)$, $\mu\in (0,\gamma)$ and a neighborhood $O\subset U\times(0,T)$ of $(x,t)$ such that 
$$
E(y,s,R)\le CR^{2\mu}, \quad R\in (0,r_1),\quad (y,s)\in O. 
$$
In particular, $\bv_t$ and $D^2\bv$ are H\"older continuous in a neighborhood of $(x,t)$. 
\end{cor}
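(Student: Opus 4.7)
The strategy is to upgrade the pointwise iteration of Corollary \ref{iterCorollary} to a uniform one on a parabolic neighborhood of $(x,t)$, pass from the geometric decay along the scales $\vartheta^k r$ to a polynomial decay at all scales via Lemma \ref{backwardsDecayLem}, and then invoke a parabolic Campanato embedding to conclude H\"older regularity of $\bv_t$ and $D^2\bv$.

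First, I would show that the hypothesis \eqref{IterAssump} persists on a whole open neighborhood of $(x,t)$. Each of the averages $(\bv_t)_{Q_r(\cdot)}$, $(D\bv)_{Q_r(\cdot)}$, $(D^2\bv)_{Q_r(\cdot)}$ is continuous in the center because $\bv_t, D\bv, D^2\bv\in L^2_{\text{loc}}(U\times(0,T))$ and translation is continuous in $L^2$; expanding squares and tracking the explicit $(y-x)$-dependence in the middle term of \eqref{localEE} shows the same for $(x,t)\mapsto E(x,t,r)$. Since the bounds in \eqref{IterAssump} are strict, I can find a slightly smaller radius $r_1\in(0,r)$ and an open parabolic neighborhood $O$ of $(x,t)$ such that every $(y,s)\in O$ satisfies \eqref{IterAssump} with $r$ replaced by $r_1$. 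Applying Corollary \ref{iterCorollary} at each such $(y,s)$ then yields
\[
E(y,s,\vartheta^k r_1) \le \frac{\epsilon_1^2}{2^k}, \qquad k\in\N,\ (y,s)\in O,
\]
together with uniform control of the averages by $L$.

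Next, I would convert this dyadic decay into polynomial decay. Choose $\mu\in(0,\alpha/2)$ small enough that $\vartheta^{2\mu}\ge 1/2$; this is possible because $\vartheta<(1/2)^{1/\alpha}$ forces $|\log\vartheta|>(\log 2)/\alpha$, and such a $\mu$ automatically lies in $(0,\gamma)$ since $\gamma>\alpha/2$. With this choice, $2^{-k}\le\vartheta^{2\mu k}$, so $E(y,s,\vartheta^k r_1)\le (\epsilon_1^2/r_1^{2\mu})(\vartheta^k r_1)^{2\mu}$. For general $R\in(0,r_1)$, choose the integer $k\ge 0$ with $\vartheta^{k+1}r_1<R\le\vartheta^k r_1$ and set $\vartheta':=R/(\vartheta^k r_1)\in(\vartheta,1]$; Lemma \ref{backwardsDecayLem} then yields
\[
E(y,s,R) \le \frac{12}{(\vartheta')^{2(n+3)}}\, E(y,s,\vartheta^k r_1) \le \frac{12}{\vartheta^{2(n+3)}}\cdot\frac{\epsilon_1^2}{r_1^{2\mu}}(\vartheta^k r_1)^{2\mu} \le C R^{2\mu},
\]
with $C$ independent of $(y,s)\in O$ and $R\in(0,r_1)$.

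The three summands of $E$ then provide uniform Campanato-type oscillation control on $\bv_t$ and $D^2\bv$ over all parabolic cylinders $Q_R(y,s)$ with $(y,s)\in O$ and $R<r_1$. By the parabolic analogue of Campanato's characterization of H\"older spaces, this implies that $\bv_t$ and $D^2\bv$ admit $\mu$-H\"older continuous representatives on $O$ with respect to the parabolic metric, and hence are H\"older continuous in the usual sense in a neighborhood of $(x,t)$. The main obstacle in this program is the very first step: one must verify that the center-dependent functional $E(\cdot,r)$---in particular the piece involving $(D^2\bv)_{Q_r}(y-x)$---is continuous enough in $(x,t)$ to transfer the strict hypothesis \eqref{IterAssump} to a full open neighborhood at a slightly reduced radius $r_1$. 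Once that is in hand, the dyadic-to-polynomial conversion is a two-line computation, Lemma \ref{backwardsDecayLem} fills in the non-dyadic scales, and the Campanato embedding is standard.
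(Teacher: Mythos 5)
Your proposal is correct and follows essentially the same route as the paper: propagate the strict hypotheses \eqref{IterAssump} to a neighborhood (and a nearby radius) by continuity of the cylinder averages and of $E$, apply Corollary \ref{iterCorollary}, interpolate the dyadic decay to all scales via Lemma \ref{backwardsDecayLem} with an exponent $\mu<\frac{\alpha}{2}<\gamma$ (your condition $\vartheta^{2\mu}\ge \frac12$ is exactly the paper's choice $\mu=\frac12\frac{\ln(1/2)}{\ln\vartheta}$), and conclude with the parabolic Campanato criterion. The step you flag as the main obstacle is handled in the paper by the same observation, namely joint continuity of $(\bv_t)_{Q_\tau(y,s)}$, $(D\bv)_{Q_\tau(y,s)}$, $(D^2\bv)_{Q_\tau(y,s)}$ and $E(y,s,\tau)$ in $(y,s)$ and $\tau$, so no genuinely new idea is needed there.
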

\begin{proof}
Let $R\in (0,r)$ and choose $k\in \N$ such that $\vartheta^{k+1}r<R\le \vartheta^k r$. We can derive
\begin{align*}
E(x,t,R)&\le \frac{12}{\vartheta^{2(n+3)}}E(x,t,\vartheta^k r).
\end{align*}
similarly to how we derived \eqref{backwardsDecayLemEst}.  See also Corollary 4.3 of \cite{Hynd} and Corollary 4.9 of 
\cite{HynTAM} for related estimates. In view of Corollary \ref{iterCorollary}, 
$$
E(x,t,R)\le \frac{12}{\vartheta^{2(n+3)}} \frac{\epsilon_1^2}{2^k}= \frac{24\epsilon_1^2}{\vartheta^{2(n+3)}} e^{-(k+1) \log 2}\le \frac{24\epsilon_1^2}{\vartheta^{2(n+3)}} \left(\frac{R}{r}\right)^\frac{\ln(1/2)}{\ln\vartheta}.
$$
Also note that \eqref{varthetaRestrict} implies $\mu:=\frac{1}{2}\frac{\ln(1/2)}{\ln\vartheta}< \frac{\alpha}{2}<\gamma$.

\par Observe that $(\bv_t)_{Q_\tau(y,s)}$, $(D\bv)_{Q_\tau(y,s)}$, $(D^2\bv)_{Q_\tau(y,s)}$ and $E(y,s,\tau)$ are all continuous functions of $(y,s)\in U\times (0,T)$ and $\tau>0$. Therefore, there exists an interval $(r_1, r_2)$ containing $r$ and a neighborhood $O$ of $(x,t)$ such that \eqref{IterAssump} holds for all $(y,s)\in O$ and $\rho\in(r_1,r_2)$. As a result, we may perform the same calculation above to find 
$$
E(y,s,R)\le \frac{24\epsilon_1^2}{\vartheta^{2(n+3)}} \left(\frac{R}{r_1}\right)^\frac{\ln(1/2)}{\ln\vartheta}
$$
for $(y,s)\in O$ and $R\in (0,r_1)$. Finally, the H\"older continuity of $\bv_t$ and $D^2\bv$ in a neighborhood of $(x,t)$ follows directly from 
Campanato's criterion \cite{Camp, DaPrato}. 
\end{proof}
In order to complete the proof of Theorem \ref{mainThm}, we only need to estimate the dimension of the set of points which either $\bv_t$ or $D^2\bv$ fails to be H\"older continuous.  We will express our results in terms of parabolic Hausdorff measure, so let us recall the definition.

\begin{defn}\label{ParaHausMeas}
For $G\subset \R^n\times\R$, $s\in [0,n+2]$, $\delta>0$, set 
$$
{\cal P}^s_\delta(G):=\inf\left\{\sum_{i\in \N}r^s_i: G\subset\bigcup_{i\in \N} Q_{r_i}(x_i,t_i),\; r_i\le \delta\right\}.
$$
The {\it $s$-dimensional parabolic Hausdorff measure} of $G$ is defined 
\begin{equation}
{\cal P}^s(G):=\sup_{\delta>0}{\cal P}^s_\delta(G).
\end{equation}
Moreover, the {\it parabolic Hausdorff dimension} of $G$ is the number
$$
\text{dim}_{\cal P}(G):=\inf\{s\ge 0:{\cal P}^s(G)=0 \}.
$$
\end{defn}
We note that ${\cal P}^s$ is an outer measure on $\R^n\times\R$ for each $s\in [0,n+2]$ and it is easy to check that Lebesgue outer measure on $\R^{n+1}$ is absolutely continuous with respect to ${\cal P}^{n+2}$.  General Hausdorff measure (as detailed in \cite{Evans} and \cite{Rogers}) is well studied and many important properties have been discovered. We will only make use of one fact regarding functions that are fractionally differentiable as described in the following lemma.  A close variant of the following lemma can be found in Proposition 3.3 of \cite{DuzMin}, and it can also be verified using Theorem 3 in section 2.4.3 of \cite{Evans}, Proposition 2.7 of \cite{GuistiBook}, or Lemma 4.2 in \cite{Min}, so we will not provide a proof of it.  

\begin{lem}\label{MingLemma}
Let $\kappa\in (0,1)$.  Suppose $w\in L^2_{\text{loc}}(U\times(0,T))$ 
satisfies 
\begin{equation}\label{FracSpaceW}
\int^{t_1}_{t_0}\int_{V}\int_V\frac{|w(x,t)-w(y,t)|^2}{|x-y|^{n+2\kappa}}dxdydt+\int^{t_1}_{t_0}\int^{t_1}_{t_0}\int_V\frac{|w(x,t)-w(x,s)|^2}{|t-s|^{1+\kappa}}dxdtds<\infty
\end{equation}
for each open $V\subset\subset U$ and interval $[t_0,t_1]\subset (0,T)$.   Then 
$$
\text{\normalfont{dim}}_{\cal P}\left(\left\{(x,t)\in U\times (0,T):  \limsup_{r\rightarrow 0^+}\ffintXT|w-(w)_{Q_r(x,t)}|^2dyds>0\right\}\right)\le n+2-2\kappa
$$
and
$$
\text{\normalfont{dim}}_{\cal P}\left(\left\{(x,t)\in U\times (0,T):  \limsup_{r\rightarrow 0^+}|(w)_{Q_r(x,t)}|=+\infty\right\}\right) \le n+2-2\kappa.
$$
\end{lem}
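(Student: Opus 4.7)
The plan is to combine a parabolic Poincar\'e-type inequality for the fractional Gagliardo seminorm in \eqref{FracSpaceW} with the standard Hausdorff density theorem. Throughout I fix $V\subset\subset U$ and $[t_0,t_1]\subset(0,T)$, and work with the nonnegative density
\[
\Phi(y,s):=\int_V \frac{|w(y,s)-w(y',s)|^2}{|y-y'|^{n+2\kappa}}\,dy'+\int_{t_0}^{t_1}\frac{|w(y,s)-w(y,s')|^2}{|s-s'|^{1+\kappa}}\,ds',
\]
which belongs to $L^1(V\times[t_0,t_1])$ by hypothesis; set $\mu(A):=\iint_A\Phi\,dy\,ds$, a finite Borel measure absolutely continuous with respect to Lebesgue measure. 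The first step is the fractional Poincar\'e estimate
\[
\frac{1}{|Q_r|}\iint_{Q_r(x,t)} |w - (w)_{Q_r(x,t)}|^2\,dy\,ds \;\le\; C\, r^{2\kappa-(n+2)}\,\mu(Q_r(x,t)),\qquad Q_r(x,t)\subset V\times[t_0,t_1].
\]
I would rewrite the left-hand side as $\tfrac{1}{2|Q_r|^2}\iint\!\!\iint|w(y,s)-w(y',s')|^2$, split by the triangle inequality into a spatial piece (common time) and a temporal piece (common space), and then multiply and divide by $|y-y'|^{n+2\kappa}$ and $|s-s'|^{1+\kappa}$ respectively, using $|y-y'|\le 2r$ and $|s-s'|\le r^2$ on $Q_r$ to pull out the factor $r^{2\kappa}$.

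With this estimate in hand, the first exceptional set $E$ decomposes as $E=\bigcup_k E_k$ with $E_k:=\{(x,t):\limsup_{r\to 0^+} \tfrac{1}{|Q_r|}\iint_{Q_r(x,t)}|w-(w)_{Q_r}|^2\ge 1/k\}$. Lebesgue's differentiation theorem for $L^2$-functions gives $|E|=0$, so $\mu(E_k)\le \mu(E)=0$ by absolute continuity. On $E_k$ the Poincar\'e estimate forces $\limsup_{r\to 0^+}\mu(Q_r(x,t))/r^{n+2-2\kappa}\ge 1/(Ck)$, and the standard Hausdorff density theorem for a Radon measure, transcribed into the parabolic setting via the Vitali covering lemma for cylinders $Q_r$ (cf.\ \cite{DuzMin,Min}), then gives ${\cal P}^{n+2-2\kappa}(E_k)\le C_1 k\,\mu(E_k)=0$. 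Hence ${\cal P}^{n+2-2\kappa}(E)=0$, which is stronger than the stated dimension bound.

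For the second exceptional set $F$, observe that since $|Q_{r/2}|$ is a fixed fraction of $|Q_r|$,
\[
|(w)_{Q_{r/2}(x,t)}-(w)_{Q_r(x,t)}|^2 \;\le\; C'\,\tfrac{1}{|Q_r|}\iint_{Q_r}|w-(w)_{Q_r}|^2\,dy\,ds \;\le\; C_2\,r^{2\kappa-(n+2)}\,\mu(Q_r(x,t)).
\]
Fix $\eta>0$ and assume $\limsup_{r\to 0^+}\mu(Q_r(x,t))/r^{n+2-2\kappa+\eta}<\infty$. Summing this estimate along a dyadic sequence $r_j=2^{-j}r_0$ yields $\sum_j|(w)_{Q_{r_{j+1}}}-(w)_{Q_{r_j}}|\le C_3\sum_j r_j^{\eta/2}<\infty$, so $(w)_{Q_r(x,t)}$ is Cauchy and in particular bounded as $r\to 0^+$. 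Contrapositively, $F\subset\{(x,t):\limsup_{r\to 0^+}\mu(Q_r(x,t))/r^{n+2-2\kappa+\eta}=\infty\}$ for every $\eta>0$. Since $F$ is contained in the complement of the $L^2$-Lebesgue set we again have $|F|=0$ and $\mu(F)=0$; the density theorem at exponent $n+2-2\kappa+\eta$ gives ${\cal P}^{n+2-2\kappa+\eta}(F)=0$, and letting $\eta\to 0^+$ produces $\text{dim}_{\cal P}(F)\le n+2-2\kappa$.

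The main obstacle is the fractional Poincar\'e inequality itself: one has to verify carefully that the parabolic aspect ratio of $Q_r=B_r\times(t-r^2/2,t+r^2/2)$ is correctly matched to the mixed weights $|x-y|^{n+2\kappa}$ and $|t-s|^{1+\kappa}$, so that both the spatial and temporal oscillations ultimately scale as $r^{2\kappa}\mu(Q_r)$ with the same power of $r$. Everything else is essentially bookkeeping around the classical Vitali-type density argument, carried out in the parabolic metric.
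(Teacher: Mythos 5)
The paper does not prove Lemma \ref{MingLemma} at all; it defers to Proposition 3.3 of \cite{DuzMin}, Proposition 2.7 of \cite{GuistiBook}, and Lemma 4.2 of \cite{Min}, and your argument is a correct self-contained rendition of exactly that standard route: the mixed space--time fractional Poincar\'e inequality on $Q_r$ (whose exponents indeed balance, since the spatial weight gives $r^{2\kappa}$ and the temporal weight gives $(r^2)^{\kappa}$), followed by the measure-density/Vitali covering theorem for the parabolic metric applied to $\mu=\Phi\,dy\,ds$, plus dyadic telescoping of the averages for the second set. The only points left implicit are routine: exhausting $U\times(0,T)$ by countably many $V\times[t_0,t_1]$ so the local measures $\mu$ cover everything, and passing from the dyadic radii $r_j$ to all $r\to 0^+$ when bounding $|(w)_{Q_r}|$, both of which are immediate.
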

Before proceeding to a proof of Theorem \ref{mainThm}, we will need a technical lemma.

\begin{lem}
Assume $\bv$ is a weak solution of \eqref{mainPDE} on $U\times(0,T)$. There is a constant $C$ depending only on $\theta,\lambda, \Theta$, and $\Lambda$ such that 
\begin{equation}\label{MyCacci}
\iint_{Q_r(x,t)}|D\bv_t|^2dyds\le \frac{C}{r^2}\iint_{Q_{2r}(x,t)}|\bv_t-(\bv_t)_{Q_{2r}(x,t)}|^2dyds
\end{equation}
whenever $Q_{2r}(x,t)\subset U\times(0,T)$. 
\end{lem}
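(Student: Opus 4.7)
The plan is to adapt the derivation of the local energy estimate \eqref{EnergyBound2} by introducing a shift: test the time-differentiated PDE against $\eta^2(\bv_t - c)$ instead of $\eta^2\bv_t$, with the constant vector $c\in\R^m$ eventually chosen as $c = (\bv_t)_{Q_{2r}(x,t)}$. The key observation, valid for classical solutions, is that since $D\psi^*$ inverts $D\psi$,
\begin{equation*}
(\bv_t - c)\cdot \partial_t D\psi(\bv_t) = \partial_t\bigl[\psi^*(D\psi(\bv_t)) - c\cdot D\psi(\bv_t)\bigr].
\end{equation*}
Combined with $\partial_t D\psi(\bv_t) = \Div(D^2F(D\bv)D\bv_t)$ and spatial integration by parts, this yields an identity identical to \eqref{SecondIdentity} except that $\psi^*(D\psi(\bv_t))$ is replaced throughout by $\psi^*(D\psi(\bv_t)) - c\cdot D\psi(\bv_t)$ and $\bv_t$ is replaced by $\bv_t - c$ in the $D\phi$ term. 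A one-dimensional Taylor expansion along the segment from $c$ to $\bv_t$, together with \eqref{UnifConv}, gives the two-sided bound
\begin{equation*}
\tfrac{\theta}{2}|\bv_t-c|^2 \;\le\; \psi^*(D\psi(\bv_t)) - c\cdot D\psi(\bv_t) + \psi(c) \;\le\; \tfrac{\Theta}{2}|\bv_t-c|^2,
\end{equation*}
where the additive constant $\psi(c)$ is eventually annihilated because $\iint \phi_t\,dx\,dt = 0$ for any compactly supported $\phi$.

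I would then choose $\phi = \eta^2$ with $\eta\in C^\infty_c(U\times(0,T))$ a standard parabolic cutoff satisfying $\eta\equiv 1$ on $Q_r(x,t)$, $\operatorname{supp}\eta\subset Q_{2r}(x,t)$, $|D\eta|\le C/r$, and $|\eta_t|\le C/r^2$. Integrating the shifted identity in time over an interval containing $\operatorname{supp}\eta$ kills the time-boundary terms. The left-hand side is bounded below using $D^2F(D\bv)(D\bv_t,D\bv_t)\ge \lambda|D\bv_t|^2$; on the right, the $\phi_t$ contribution is bounded by $\Theta\iint \eta|\eta_t||\bv_t-c|^2$, while the $D\phi$ contribution is controlled by $2\Lambda\iint \eta|D\eta||\bv_t-c||D\bv_t|$ and split by Young's inequality into $\tfrac{\lambda}{2}\iint\eta^2|D\bv_t|^2 + \tfrac{2\Lambda^2}{\lambda}\iint|D\eta|^2|\bv_t-c|^2$. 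Absorbing the gradient term on the left produces
\begin{equation*}
\iint_{Q_r(x,t)}|D\bv_t|^2\,dy\,ds \;\le\; C\iint_{Q_{2r}(x,t)}\!\!(\eta|\eta_t|+|D\eta|^2)\,|\bv_t-c|^2\,dy\,ds \;\le\; \frac{C}{r^2}\iint_{Q_{2r}(x,t)}\!\!|\bv_t-c|^2\,dy\,ds,
\end{equation*}
and choosing $c = (\bv_t)_{Q_{2r}(x,t)}$ completes the proof.

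The main obstacle is justifying the shifted identity rigorously for weak solutions, since \eqref{SecondIdentity} was established in the earlier proposition only for $\psi$ satisfying the normalization \eqref{psiFzeroAtzero}. The remedy is to rerun that proposition's proof with the convex functional $\Psi(\bw) = \int u\psi^*(\bw)\,dx$ replaced by $\Psi_c(\bw) = \int u\,[\psi^*(\bw) - c\cdot\bw]\,dx$; since the extra term is linear in $\bw$, the modified $\Psi_c$ is still convex, lower-semicontinuous and proper, its subdifferential $\partial\Psi_c$ differs from $\partial\Psi$ only by the constant $-uc$, and the slope estimate analogous to \eqref{PsiSlopeForm} combined with the $H^{-1}$-absolute continuity of $D\psi(\bv_t)$ established in Proposition \ref{WeakContProp} carries over verbatim. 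Equivalently, one can apply the existing proposition to the shifted solution $\tilde\bv(x,t) = \bv(x,t) - ct$, which satisfies the system with $\tilde\psi(w) = \psi(w+c)$ in place of $\psi$ (noting $\tilde\psi^*(z) = \psi^*(z) - c\cdot z$), provided one checks that the extension of \eqref{SecondIdentity} to weak solutions does not genuinely require \eqref{psiFzeroAtzero}.
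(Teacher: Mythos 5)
Your proposal is correct and follows essentially the same route as the paper: the paper likewise rederives \eqref{SecondIdentity} with the shifted quantity $D\psi(\bv_t)\cdot(\bv_t-c)-\psi(\bv_t)+\psi(c)$ (your $\psi^*(D\psi(\bv_t))-c\cdot D\psi(\bv_t)+\psi(c)$), tests with $\phi=\eta^2$ for a parabolic cutoff with $\eta\equiv 1$ on $Q_r$, support in $Q_{2r}$, $|D\eta|\le C/r$, $|\eta_t|\le C/r^2$, absorbs via Young's inequality, and then sets $c=(\bv_t)_{Q_{2r}(x,t)}$. Your extra discussion of how to justify the shifted identity for weak solutions (modifying $\Psi$ by a linear term, or passing to $\tilde\bv=\bv-ct$) is a valid elaboration of the step the paper dispatches with ``as we derived \eqref{SecondIdentity}.''
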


\begin{proof}
Fix $c\in \R^m$ and $\phi\in C_c^\infty(U\times(0,T))$. As we derived \eqref{SecondIdentity}, we find
\begin{align}
&\frac{d}{dt}\int_U \phi (D\psi(\bv_t)\cdot(\bv_t-c)-\psi(\bv_t)+\psi(c) )dx+\int_U\phi D^2F(D\bv)(D\bv_t, D\bv_t) dx=\\
&\hspace{.5in}\int_U\left((D\psi(\bv_t)\cdot(\bv_t-c)-\psi(\bv_t)+\psi(c) )\phi_t -(\bv_t-c)\cdot D^2F(D\bv)D\bv_tD\phi \right)dx
\end{align}
for almost every $t\in (0,T)$. Selecting $\phi=\eta^2$ for $\eta\ge 0$ gives the inequality
\be\label{MyCacciOnTheWay}
\max_{0\le t\le T}\int_U\eta^2|\bv_t-c|^2 dx + \int^T_0\int_U \eta^2|D\bv_t|^2dxdt 
\le C_1\int^T_0\int_U\left(\eta|\eta_t|+|D\eta|^2\right) |\bv_t-c|^2dxdt.
\ee
Here $C_1$ only depends on $\theta,\lambda, \Theta$ and $\Lambda$.

\par In order to conclude, we pick $c=(\bv_t)_{Q_{2r}(x,t)}$ and
choose $\eta$ to satisfy $\eta(y,\tau) = \eta_0(y)\cdot \eta_1(\tau)$, where
$$
\begin{cases}
\eta_0\in C^\infty_c(\R^n)\\
0\le \eta_0\le 1\\
\eta_0\equiv 1\;\;\text{in}\;\; B_r(x)\\
\eta_0\equiv 0\;\;\text{in}\;\; \R^n\setminus B_{2r}(x)\\
|D\eta_0|\le 2/r
\end{cases}\quad\text{and}\quad \quad
\begin{cases}
\eta_1\in C^\infty_c(\R)\\
0\le \eta_1\le 1\\
\eta_1\equiv 1\;\;\text{in}\;\; (t-r^2/2,t+r^2/2)\\
\eta_1\equiv 0\;\;\text{in}\;\; \R\setminus (t-2r^2,t+2r^2)\\
|\partial_\tau\eta_1|\le 2/r^2.
\end{cases}
$$
We only need to substitute this choice in \eqref{MyCacciOnTheWay} to conclude \eqref{MyCacci}. 
\end{proof}

\begin{proof}[Proof of Theorem \ref{mainThm}]  
Our goal is to show ${\cal P}^{n+2-2\beta}(U\times(0,T)\setminus{\cal O})=0$ for some $\beta\in (0,1)$ where
$$
{\cal O}:=\{(x,t)\in U\times(0,T): D^2\bv\;\text{and}\; \bv_t \;\text{are H\"older continuous in a neighborhood of $(x,t)$}\}.
$$
To this end, we choose $\beta, \epsilon>0$ to satisfy 
\begin{equation}\label{BetaInterval3}
\beta+\epsilon<\min\left\{\frac{\alpha}{2},\frac{1}{2}-\frac{1}{p}\right\}.
\end{equation}
Here $p>2$ is from Corollaries \ref{PeeCorollary} and \ref{TimeDiffDVCor} and $\alpha$ is a H\"older exponent for $D^2F$ that 
we considered in Corollary \ref{SecondDiffDVCor}.  By Corollary \ref{DecayCor}, 
$$
U\times(0,T)\setminus{\cal O}\subset G_1 \cup G_2\cup G_3\cup G_4
$$
where
$$
\begin{cases}
G_1:=\left\{(x,t)\in U\times (0,T):  \limsup_{r\rightarrow 0^+}E(x,t,r)>0\right\}\\\\
G_2:=\left\{(x,t)\in U\times (0,T):  \limsup_{r\rightarrow 0^+}|(\bv_t)_{Q_r(x,t)}|=+\infty\right\}\\\\
G_3:=\left\{(x,t)\in U\times (0,T):  \limsup_{r\rightarrow 0^+}|(D\bv)_{Q_r(x,t)}|=+\infty\right\}\\\\
G_4:=\left\{(x,t)\in U\times (0,T):  \limsup_{r\rightarrow 0^+}|(D^2\bv)_{Q_r(x,t)}|=+\infty\right\}.
\end{cases}
$$
It suffices to show ${\cal P}^{n+2-2\beta}(G_i)=0$ for $i=1,2,3,4.$

\par Let us recall Poincar\'e's inequality on the cylinder $Q_r(x,t)\subset U\times(0,T)$
$$
\iint_{Q_r}|\bw - (\bw)_{Q_r}|^2dyds \le C_0\left\{r^4\iint_{Q_r}|\bw_t|^2dyds + r^2\iint_{Q_r}|D\bw|^2dyds\right\}
$$
for $\bw\in H^1(U\times(0,T);\R^m)$. Here $C_0$ is a constant independent of $\bw$. Choosing 
$$
\bw=\bv_{x_i}-(\bv_{x_i})_{Q_r} -(D\bv_{x_i})_{Q_r}(y-x),
$$ 
summing over $i=1,\dots, m$ and dividing by $r^2$ gives
{\small 
\be\label{SimpleUpperBoundE}
E(x,t,r)\le (1+C_0)\left\{ 
\ffint|\bv_t - (\bv_t)_{Q_r}|^2 dyds+ r^2\ffint\;\; \left|D\bv_t\right|^2dyds + \ffint\;\; \left|D^2\bv-(D^2\bv)_{Q_r}\right|^2dyds\right\}.
\ee}

\par Using \eqref{MyCacci}, we can take the limit superior of both sides of \eqref{SimpleUpperBoundE} to get 
\begin{align*}
\limsup_{r\rightarrow 0^+}E(x,t,r)&\le (C_0+1)(C+1)\limsup_{r\rightarrow 0^+}\ffint|\bv_t - (\bv_t)_{Q_r}|^2 dyds\\
&\hspace{1in}+(C_0+1)\limsup_{r\rightarrow 0^+}\ffint\;\; \left|D^2\bv-(D^2\bv)_{Q_r}\right|^2dyds
\end{align*}
for any $(x,t)\in U\times (0,T)$.  Therefore, $G_1\subset G_5\cup G_6$ where
$$
G_5:=\left\{(x,t)\in U\times (0,T):  \limsup_{r\rightarrow 0^+} \ffintXT|\bv_t-(\bv_t)_{Q_r(x,t)}|^2dyds>0\right\}
$$
and 
$$
G_6:=\left\{(x,t)\in U\times (0,T):  \limsup_{r\rightarrow 0^+} \ffintXT|D^2\bv-(D^2\bv)_{Q_r(x,t)}|^2dyds>0\right\}.
$$

\par Since $D\bv_t\in L^2_{\text{loc}}(U\times(0,T);\Mmn)$ and $\bv_t$ satisfies 
\eqref{FractionVtDeriv2}, the components of $\bv_t$ satisfies \eqref{FracSpaceW} for $\kappa=\beta +\epsilon$.   Here we are using the 
fact that $H^1_{\text{loc}}(U)\subset H^\sigma_{\text{loc}}(U)$ $(0<\sigma<1)$ (Proposition 2.2 of \cite{DiHitch}). Lemma 
\ref{MingLemma} then implies  
$$
\text{dim}_{\cal P}(G_5)\le n+2-2(\beta+\epsilon)\quad \text{and}\quad \text{dim}_{\cal P}(G_2)\le n+2-2(\beta+\epsilon).
$$
It follows that ${\cal P}^{n+2-2\beta}(G_5)={\cal P}^{n+2-2\beta}(G_2)=0$.  Likewise, we can make use of the integrability 
$D^3\bv\in L^2_{\text{loc}}(U\times(0,T);{\cal S}^3(\R^n;\R^m))$ and fractional time differentiability \eqref{FractionD2VDeriv} to show 
that $v^i_{x_jx_k}$ satisfies \eqref{FracSpaceW} for $\kappa=\beta +\epsilon$. Using Lemma 
\ref{MingLemma}, we have ${\cal P}^{n+2-2\beta}(G_6)={\cal P}^{n+2-2\beta}(G_4)=0$. Hence, ${\cal P}^{n+2-2\beta}(G_1)=0$, as 
well.  The conclusion ${\cal P}^{n+2-2\beta}(G_3)=0$ follows similarly as $v^i_{x_j}$ satisfies \eqref{FracSpaceW} for every $\kappa\in (0,1)$ and $i=1,\dots, m$, $j=1,\dots, n$.
\end{proof}

\appendix
\section{The Dirichlet problem}\label{DirichletAppendix}
In this appendix, we consider the following initial value problem: for a given $\bg:  U\rightarrow \R^m$, 
find a solution $\bv: U\times [0,T)\rightarrow \R^m$ of 
\begin{equation}\label{mainIVP}
\begin{cases}
D\psi(\bv_t)=\Div DF(D\bv), \quad & \text{in}\;U\times(0,T)\\
\hspace{.43in}\bv=0, \quad &  \text{on}\;\partial U\times[0,T)\\
\hspace{.43in}\bv=\bg, \quad &  \text{on}\;U\times\{0\}.
\end{cases}
\end{equation} 
It has been shown that this initial value problem has a solution, which is 
known to satisfy a global estimate of the type \eqref{EnergyBound1}. Our goal here is to show that there exists a weak solution that additionally satisfies inequality \eqref{EnergyBound2}. Applying Theorem \ref{mainThm}, we will also be able to conclude that this weak solution is in 
fact partially regular.

For any smooth solution $\bv$, we have
\begin{equation}\label{Aidentity1}
\frac{d}{dt}\int_U F(D\bv)dx=-\int_UD\psi(\bv_t)\cdot \bv_t dx.
\end{equation}
It then follows
\begin{equation}\label{DirIdenitity1}
\int^T_0\int_U|\bv_t|^2dxds+\max_{0\le t\le T}\int_U|D\bv(x,t)|^2dx\le C\int_U|D\bg|^2dx.
\end{equation}
The constant $C$ only depends on $\theta, \Theta,\lambda$ and $\Lambda$. 

\par We also have 
\be \label{Aidentity2}
\frac{d}{dt}\int_U \psi^*(D\psi(\bv_t))dx = - \int_U D^2F(D\bv)\left(D\bv_t, D\bv_t\right)dx.
\ee
We can multiply this identity with $f\in C^\infty_c(0,T)$ that satisfies $0\le f\le 1$,  $|f'|\le 2/d$ and $f(t)=1$ for $t\in [d,T-d]$ for some $d\in (0,T/2)$; integrating
the resulting equality over $(0,T)$, we find there is constant $C$ such that 
\be\label{DirIdenitity2}
\max_{d\le t\le T-d}\int_U|\bv_t(x,t)|^2dx +\int^{T-d}_{d}\int_U|D\bv_t|^2dxds\le \frac{C}{d}\int_U|D\bg|^2dx.
\ee
Here $C$ only depends on $\theta, \Theta,\lambda,$ and $\Lambda$.  This bound along with \eqref{DirIdenitity1} gives us an idea of what type of integrability can be expected from a weak solution. In 
particular, we have the following definition. 

\begin{defn}\label{DirichletWeakSoln} Suppose $\bg\in H^1(U;\R^m)$. A {\it weak solution} $\bv$ of  \eqref{mainIVP} satisfies $(i)$  
\be\label{NaturalIVPSpaceZero}
\bv\in L^\infty((0,T); H^{1}_{0}(U;\R^m)),
\ee
\begin{equation}\label{NaturalIVPSpace}
\bv_t\in L^\infty_{\text{loc}}((0,T); L^2(U;\R^m))\cap L^2(U\times(0,T);\R^m),
\end{equation}
and 
\begin{equation}\label{NaturalIVPSpace2}
D\bv_t\in L^2_{\text{loc}}(U\times (0,T); \Mmn),
\end{equation}
$(ii)$ the weak solution condition \eqref{WeakSolnCond} and $(iii)$
\be \label{initialCondfov}
\bv(\cdot, 0)=\bg.
\ee
\end{defn}
\begin{rem}
The integrability \eqref{NaturalIVPSpaceZero} and \eqref{NaturalIVPSpace} imply that $\bv: [0,T)\rightarrow L^2(U;\R^m)$ is continuous. Thus, 
we have no problem setting \eqref{initialCondfov}. 
\end{rem}

\par We will now provide an approach to verifying the existence of a weak solution as defined above.  To this end, we will employ the {\it implicit time scheme}:  $N\in\N$, $\tau:=T/N$, $\bv^0=\bg$
\begin{equation}\label{IFT}
\begin{cases}
D\psi\left(\displaystyle\frac{\bv^k - \bv^{k-1}}{\tau}\right)=\Div DF(D\bv^k), \quad &\;\text{in}\; U\\
\hspace{1.1in}\bv^k=0,\quad & \text{on}\; \partial U
\end{cases}
\end{equation}
for  $ k=1,\dots, N$.  Here \eqref{IFT} holds in the weak sense: for $k=1,\dots, N$,
\begin{equation}\label{WeakIFT}
\int_U D\psi\left(\frac{\bv^k - \bv^{k-1}}{\tau}\right)\cdot  \bw dx + \int_U DF(D\bv^k)\cdot D\bw dx=0 
\end{equation}
for each $\bw \in H^1_0(U;\R^m)$.  We now present two fundamental identities for this discrete scheme that are inspired by 
\eqref{Aidentity1} and \eqref{Aidentity2}.

\begin{prop}
Let $\{\bv^1,\dots, \bv^N\}$ be a solution sequence of \eqref{IFT}.  Then 
\begin{equation}\label{DiscreteIdentity1}
\int_U D\psi\left(\frac{\bv^k - \bv^{k-1}}{\tau}\right)\cdot  (\bv^k - \bv^{k-1})dx + \int_U F(D\bv^k)dx\le \int_U F(D\bv^{k-1})dx
\end{equation}
for $k=1,\dots, N$ and
\begin{align}\label{DiscreteIdentity2}
&\int_U \left(DF(D\bv^k) - DF(D\bv^{k-1})\right)\cdot (D\bv^k - D\bv^{k-1})dx \nonumber \\
&+ \tau\int_U\psi^*\left(D\psi\left(\frac{\bv^k-\bv^{k-1}}{\tau}\right)\right)dx\le\tau\int_U\psi^*\left(D\psi\left(\frac{\bv^{k-1}-\bv^{k-2}}{\tau}\right)\right)dx
\end{align}
$k=2,\dots, N$.
\end{prop}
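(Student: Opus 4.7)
The plan is to establish each inequality by testing the weak form \eqref{WeakIFT} against a carefully chosen increment and then exploiting convexity of $F$ and of $\psi^*$ via the supporting hyperplane (subgradient) inequality. Both identities are the natural discrete analogues of \eqref{Aidentity1} and \eqref{Aidentity2}: differentiation in time is simply replaced by a one-sided convex difference.

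For \eqref{DiscreteIdentity1}, I would first observe that $\bw := \bv^k - \bv^{k-1}$ belongs to $H^1_0(U;\R^m)$ (this is immediate for $k\ge 2$ from the boundary condition in \eqref{IFT}, and for $k=1$ it holds under the compatibility $\bg\in H^1_0(U;\R^m)$ implicit in \eqref{mainIVP}), so it is admissible as a test function in \eqref{WeakIFT} at level $k$. Substituting yields
\begin{equation*}
\int_U D\psi\!\left(\frac{\bv^k - \bv^{k-1}}{\tau}\right)\cdot (\bv^k - \bv^{k-1})\,dx = -\int_U DF(D\bv^k)\cdot (D\bv^k - D\bv^{k-1})\,dx.
\end{equation*}
Since $F$ is convex, the subgradient inequality $F(D\bv^{k-1}) \ge F(D\bv^k) + DF(D\bv^k)\cdot(D\bv^{k-1}-D\bv^k)$ gives $DF(D\bv^k)\cdot(D\bv^k-D\bv^{k-1}) \ge F(D\bv^k) - F(D\bv^{k-1})$. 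Integrating this pointwise bound and inserting it into the identity above produces \eqref{DiscreteIdentity1}.

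For \eqref{DiscreteIdentity2}, set $w_j := (\bv^j - \bv^{j-1})/\tau$ and subtract \eqref{WeakIFT} at index $k-1$ from its counterpart at index $k$ to obtain
\begin{equation*}
\int_U (D\psi(w_k) - D\psi(w_{k-1}))\cdot \bw\,dx + \int_U (DF(D\bv^k) - DF(D\bv^{k-1}))\cdot D\bw\,dx = 0
\end{equation*}
for every $\bw \in H^1_0(U;\R^m)$. I would then test against $\bw = \bv^k - \bv^{k-1} = \tau w_k$. The decisive step is to bound the time-difference term $\tau(D\psi(w_k)-D\psi(w_{k-1}))\cdot w_k$ from below by a difference of $\psi^*$-values. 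Because $D\psi$ and $D\psi^*$ are mutually inverse (a consequence of the uniform convexity \eqref{UnifConv}), convexity of $\psi^*$ evaluated at $D\psi(w_{k-1})$ with subgradient $D\psi^*(D\psi(w_k)) = w_k$ yields
\begin{equation*}
\psi^*(D\psi(w_{k-1})) \ge \psi^*(D\psi(w_k)) + w_k\cdot(D\psi(w_{k-1}) - D\psi(w_k)),
\end{equation*}
which rearranges to $w_k\cdot (D\psi(w_k) - D\psi(w_{k-1})) \ge \psi^*(D\psi(w_k)) - \psi^*(D\psi(w_{k-1}))$. Multiplying by $\tau$, integrating, and combining with the tested subtracted equation produces \eqref{DiscreteIdentity2}.

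The main obstacle is bookkeeping the sign and the direction of each convex inequality, especially selecting the correct base point for the subgradient inequality so that the $\psi^*$ terms appear on the correct side; once this is done, both assertions reduce to a single line of convexity. Admissibility of the test functions follows from the zero boundary data built into \eqref{IFT}, and integrability of every integrand is controlled by \eqref{UnifConv}--\eqref{UnifConv2} together with $\bv^j\in H^1_0(U;\R^m)$.
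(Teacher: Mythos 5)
Your proof is correct and follows essentially the same route as the paper: test \eqref{WeakIFT} at level $k$ with $\bw=\bv^k-\bv^{k-1}$ and use convexity of $F$ for \eqref{DiscreteIdentity1}, then test the equations at levels $k$ and $k-1$ against the same increment and apply the subgradient inequality for $\psi^*$ (with $D\psi^*(D\psi(w_k))=w_k$) for \eqref{DiscreteIdentity2}. Your explicit remark on admissibility of $\bv^1-\bg$ (requiring $\bg\in H^1_0$) is a small point the paper leaves implicit, but otherwise the arguments coincide.
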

\begin{proof}
Choosing $\bw=\bv^k-\bv^{k-1}$ in \eqref{WeakIFT} and using the convexity of $F$ gives \eqref{DiscreteIdentity1}. Let us now focus on 
\eqref{DiscreteIdentity2}.  In view of \eqref{WeakIFT},
\begin{align*}
\int_U \left(DF(D\bv^k) - DF(D\bv^{k-1}\right))\cdot (D\bv^k - D\bv^{k-1})dx &= \int_U  DF(D\bv^k) \cdot D(\bv^k - \bv^{k-1})dx\\
&\;\; -\int_U  DF(D\bv^{k-1}) \cdot D(\bv^k - \bv^{k-1})dx\\
 &= - \int_U D\psi\left(\frac{\bv^k - \bv^{k-1}}{\tau}\right)\cdot (\bv^k - \bv^{k-1})dx\\
& \;\; \int_U D\psi\left(\frac{\bv^{k-1} - \bv^{k-2}}{\tau}\right)\cdot (\bv^k - \bv^{k-1})dx.
\end{align*}
The claim follows once we notice 
\begin{align*}
&\int_U \left[D\psi\left(\frac{\bv^{k-1} - \bv^{k-2}}{\tau}\right)-D\psi\left(\frac{\bv^k - \bv^{k-1}}{\tau}\right)\right]\cdot (\bv^k - \bv^{k-1})dx \\
& =\tau\int_U \left[D\psi\left(\frac{\bv^{k-1} - \bv^{k-2}}{\tau}\right)-D\psi\left(\frac{\bv^k - \bv^{k-1}}{\tau}\right)\right]\cdot D\psi^*\left(D\psi\left(\frac{\bv^k - \bv^{k-1}}{\tau}\right)\right)dx\\
&\le \tau\int_U\left[\psi^*\left(D\psi\left(\frac{\bv^{k-1}-\bv^{k-2}}{\tau}\right)\right)-\psi^*\left(D\psi\left(\frac{\bv^k-\bv^{k-1}}{\tau}\right)\right)\right]dx. 
\end{align*}
The inequality here follows by the convexity of $\psi^*$. 
\end{proof}
Let us denote $\tau_k=k\tau $ for $k=0,\dots, N$. For a given $d\in (0,T/2)$, we  will also use the notation $d\le \tau_k\le T-d$ to denote the collection of natural numbers
$$
\{k\in \N: d\le \tau_k\le T-d \}.
$$
Below, we present two estimates that are discrete analogs of \eqref{DirIdenitity1} and \eqref{DirIdenitity2}. 
\begin{cor}
Let $\{\bv^1,\dots, \bv^N\}$ be a solution sequence of \eqref{IFT} and $d\in (0,T/2)$. There is a constant $C$ depending only on $\theta,\Theta,\lambda, \Lambda$ such that 
 \begin{equation}\label{DiscreteBound1}
\sum^N_{k=1}\int_U\frac{|\bv^k - \bv^{k-1}|^2}{\tau}dx + \max_{1\le k\le N}\int_U |D\bv^k|^2dx\le C\int_U |D\bg|^2dx
\end{equation}
and 
\begin{equation}\label{DiscreteBound2}
\sum_{d\le \tau_k\le T-d}\int_U\frac{|D\bv^k - D\bv^{k-1}|^2}{\tau}dx + \max_{d\le \tau_{k}\le T-d}\int_U
\left|\frac{\bv^{k-1} - \bv^{k-2}}{\tau}\right|^2dx\le \frac{C}{d}\int_U |D\bg|^2dx
\end{equation} 
for all $N$ sufficiently large. 
\end{cor}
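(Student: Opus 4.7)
The plan is to prove both bounds by directly paralleling the continuous arguments that yielded \eqref{DirIdenitity1} and \eqref{DirIdenitity2}, using the two discrete identities \eqref{DiscreteIdentity1} and \eqref{DiscreteIdentity2} in place of \eqref{Aidentity1} and \eqref{Aidentity2}, and the pointwise estimates \eqref{MoreEstPsi} and \eqref{MoreEstF} throughout.

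For \eqref{DiscreteBound1}, I would sum \eqref{DiscreteIdentity1} from $k=1$ to $k=k_0 \le N$; the $F$-terms telescope to give
$$
\sum_{k=1}^{k_0}\int_U D\psi\!\left(\tfrac{\bv^k-\bv^{k-1}}{\tau}\right)\!\cdot(\bv^k-\bv^{k-1})\,dx + \int_U F(D\bv^{k_0})\,dx \;\le\; \int_U F(D\bg)\,dx.
$$
The lower bound $D\psi(w)\cdot w \ge \theta|w|^2$ from \eqref{MoreEstPsi} turns the $k$-th summand on the left into $\theta\int_U\tfrac{|\bv^k-\bv^{k-1}|^2}{\tau}dx$, while $F(D\bv^{k_0})\ge \tfrac{\lambda}{2}|D\bv^{k_0}|^2$ and $F(D\bg)\le \tfrac{\Lambda}{2}|D\bg|^2$ from \eqref{MoreEstF} produce both the spatial gradient term and the right-hand side. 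Taking the maximum over $k_0$ gives \eqref{DiscreteBound1}.

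For \eqref{DiscreteBound2}, I would introduce the discrete energy
$$
h(k) := \int_U \psi^*\!\left(D\psi\!\left(\tfrac{\bv^k-\bv^{k-1}}{\tau}\right)\right)dx, \qquad k=1,\dots,N,
$$
which mimics the continuous quantity $g(t)=\int_U\psi^*(D\psi(\bv_t))\,dx$. Combining \eqref{DiscreteIdentity2} with the uniform convexity of $F$ in \eqref{UnifConv2} gives the crucial discrete monotonicity inequality
$$
\lambda\int_U|D\bv^k-D\bv^{k-1}|^2\,dx + \tau h(k) \;\le\; \tau h(k-1) \qquad (k\ge 2),
$$
so in particular $k\mapsto h(k)$ is nonincreasing. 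Using $\psi^*(D\psi(w))\le \tfrac{\Theta^2}{2\theta}|w|^2$ (a consequence of \eqref{MoreEstPsi}) together with the already-established \eqref{DiscreteBound1}, I obtain the averaged bound
$$
\tau\sum_{k=1}^N h(k) \;\le\; \tfrac{\Theta^2}{2\theta}\sum_{k=1}^N\int_U\tfrac{|\bv^k-\bv^{k-1}|^2}{\tau}\,dx \;\le\; C\int_U|D\bg|^2\,dx,
$$
the discrete analog of $\int_0^T g(t)\,dt\lesssim \|D\bg\|_{L^2}^2$.

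From monotonicity and the averaged bound, for every $k$ I then get $h(k)\le \tfrac{1}{k}\sum_{j=1}^k h(j)\le \tfrac{C}{\tau_k}\int_U|D\bg|^2\,dx$; for $N$ large enough that $\tau<d/2$ and for $k$ with $\tau_k\ge d$, this yields $h(k)\le \tfrac{C'}{d}\int_U|D\bg|^2$. The max bound in \eqref{DiscreteBound2} follows from $h(k)\ge \tfrac{\theta}{2}\int_U|\tfrac{\bv^k-\bv^{k-1}}{\tau}|^2 dx$ (using \eqref{MoreEstPsi}), after a trivial index shift. The sum bound in \eqref{DiscreteBound2} follows by rewriting the monotonicity inequality as $\tfrac{\lambda}{\tau}\|D\bv^k-D\bv^{k-1}\|_{L^2}^2 \le h(k-1)-h(k)$ and summing telescopically from $k=a$ (smallest index with $\tau_k\ge d$) to $k=b$ (largest index with $\tau_k\le T-d$); the right-hand side collapses to $h(a-1)$, which is bounded by $\tfrac{C'}{d}\int_U|D\bg|^2$ by the previous step since $\tau_{a-1}\ge d/2$ for $N$ large. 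The main obstacle, and the place where some care is needed, is obtaining the correct $1/d$ dependence in the discrete setting without mimicking the continuous cutoff argument (whose discrete Abel-summation version is messier): the key observation is that discrete monotonicity of $h$ together with the $L^1$-in-$k$ bound $\tau\sum_k h(k)\lesssim\|D\bg\|_{L^2}^2$ upgrades automatically to the pointwise bound $h(k)\lesssim 1/\tau_k\cdot\|D\bg\|_{L^2}^2$ via a one-line averaging, bypassing the need for any cutoff.
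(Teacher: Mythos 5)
Your proof of \eqref{DiscreteBound1} is exactly the paper's argument (telescoping \eqref{DiscreteIdentity1} and invoking \eqref{MoreEstPsi}, \eqref{MoreEstF}), but your proof of \eqref{DiscreteBound2} is correct and genuinely different from the paper's. The paper mimics the continuous cutoff argument behind \eqref{DirIdenitity2}: it multiplies \eqref{DiscreteIdentity2} by $f(\tau_k)$ for a smooth cutoff $f$ with $|f'|\le 2/d$, performs a discrete summation by parts, and lands on the bound $\frac{\Theta}{d}\sum_k\int_U|\bv^k-\bv^{k-1}|^2/\tau\,dx$, which \eqref{DiscreteBound1} controls. You instead observe that \eqref{DiscreteIdentity2}, combined with \eqref{UnifConv2}, makes the discrete dissipation $h(k)=\int_U\psi^*\bigl(D\psi\bigl(\tfrac{\bv^k-\bv^{k-1}}{\tau}\bigr)\bigr)dx$ nonincreasing, and you convert the $\ell^1$-in-$k$ bound $\tau\sum_k h(k)\lesssim\|D\bg\|_{L^2}^2$ (itself a consequence of \eqref{DiscreteBound1} and the Fenchel identity $\psi^*(D\psi(w))=D\psi(w)\cdot w-\psi(w)$ together with \eqref{MoreEstPsi}) into the pointwise decay $h(k)\lesssim \tau_k^{-1}\|D\bg\|_{L^2}^2$ by averaging; the sum term in \eqref{DiscreteBound2} then comes from telescoping $\lambda\tau^{-1}\|D\bv^k-D\bv^{k-1}\|_{L^2}^2\le h(k-1)-h(k)$, and the index shifts are harmless once $\tau<d/2$, which is where your ``$N$ sufficiently large'' enters (the paper needs an analogous largeness so that $f$ vanishes on $[0,\tau_2]$). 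Your route is arguably cleaner: it avoids the cutoff and Abel summation entirely and yields the slightly stronger pointwise-in-time statement $h(k)\lesssim\tau_k^{-1}\|D\bg\|_{L^2}^2$, the discrete analog of $t\,\|\bv_t(\cdot,t)\|_{L^2}^2\lesssim\|D\bg\|_{L^2}^2$. What the paper's cutoff argument buys is robustness: it does not use monotonicity of $h$ at all, only the one-step inequality \eqref{DiscreteIdentity2}, so it transfers to perturbed settings where the dissipation fails to be monotone. All constants in your argument depend only on $\theta,\Theta,\lambda,\Lambda$, as required.
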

\begin{proof}
Summing \eqref{DiscreteIdentity1} over $k=1,\dots, j\le N$ gives
\be
\sum^j_{k=1}\int_U D\psi\left(\frac{\bv^k - \bv^{k-1}}{\tau}\right)\cdot  (\bv^k - \bv^{k-1})dx + \max_{1\le k\le j}\int_U F(D\bv^k)dx\le \int_U F(D\bg)dx. 
\ee
Our assumptions on the convexity of $\psi$ and $F$ now immediately imply \eqref{DiscreteBound1}.

\par Let us now consider \eqref{DiscreteBound2}. Choose $f\in C^\infty_c(0,T)$ with $0\le f\le 1$, $f(t)= 1$ for $t\in [d,T-d]$  and $|f'|\le 2/d$. Multiplying
\eqref{DiscreteIdentity2} by $f(\tau_k)$ and summing over $k=2,\dots, j\le N$ gives
\begin{align*}
&\sum^j_{k=2}f(\tau_k)\int_U \left(DF(D\bv^k) - DF(D\bv^{k-1}\right)\cdot (D\bv^k - D\bv^{k-1})dx \\
&\le \tau\sum^j_{k=2}f(\tau_k)\int_U\left[\psi^*\left(D\psi\left(\frac{\bv^{k-1}-\bv^{k-2}}{\tau}\right)\right)-\psi^*\left(D\psi\left(\frac{\bv^k-\bv^{k-1}}{\tau}\right)\right)\right]dx.
\end{align*}
Now let $N$ be so large that $f(t)=0$ for $t\in [0,\tau_2]=[0,2T/N]$. Summing by parts, we have 
\begin{align*}
&\tau\sum^j_{k=2}f(\tau_k)\int_U\left[\psi^*\left(D\psi\left(\frac{\bv^{k-1}-\bv^{k-2}}{\tau}\right)\right)-\psi^*\left(D\psi\left(\frac{\bv^k-\bv^{k-1}}{\tau}\right)\right)\right]dx
\\
&= -\left[\tau f(\tau_j)\int_U\psi^*\left(D\psi\left(\frac{\bv^{j-1}-\bv^{j-2}}{\tau}\right)\right)dx-\tau f(\tau_2)\int_U\psi^*\left(D\psi\left(\frac{\bv^{1}-\bv^{0}}{\tau}\right)\right)dx\right]
\\
&\quad + \tau\sum^j_{k=2}(f(\tau_k)-f(\tau_{k-1}))\int_U\psi^*\left(D\psi\left(\frac{\bv^{k-1}-\bv^{k-2}}{\tau}\right)\right)dx\\
&=\tau\sum^j_{k=2}(f(\tau_k)-f(\tau_{k-1}))\int_U\psi^*\left(D\psi\left(\frac{\bv^{k-1}-\bv^{k-2}}{\tau}\right)\right)dx\\
&\quad\quad -\tau f(\tau_j)\int_U\psi^*\left(D\psi\left(\frac{\bv^{j-1}-\bv^{j-2}}{\tau}\right)\right)dx.
\end{align*}
\par Therefore,
\begin{align*}
&\sum^j_{k=2}f(\tau_k)\int_U \left(DF(D\bv^k) - DF(D\bv^{k-1}\right)\cdot \frac{D\bv^k - D\bv^{k-1}}{\tau}dx+\\
&  f(\tau_j)\int_U\psi^*\left(D\psi\left(\frac{\bv^{j-1}-\bv^{j-2}}{\tau}\right)\right)dx\le \sum^j_{k=2}(f(\tau_k)-f(\tau_{k-1}))\int_U\psi^*\left(D\psi\left(\frac{\bv^{k-1}-\bv^{k-2}}{\tau}\right)\right)dx
\end{align*}
$j=2,\dots, N$.  From our convexity assumptions on $\psi$ and $F$ and the assumption that $|f'|\le 2/d$.
\begin{align*}
&\lambda\sum^N_{k=1}f(\tau_k)\int_U\frac{|D\bv^k- D\bv^{k-1}|^2}{\tau}dx+\frac{\theta}{2}\max_{1\le k\le N}f(\tau_k)\int_U\left|\frac{\bv^{k-1}-\bv^{k-2}}{\tau}\right|^2dx\\
& \hspace{1in} \le\frac{\Theta}{d} \sum^N_{k=1}\int_U\frac{\left|\bv^{k}-\bv^{k-1}\right|^2}{\tau}dx.
\end{align*}
Since $f(\tau_k)=1$ whenever $d\le\tau_k\le T-d$, we are able to conclude \eqref{DiscreteBound2} provided
$\sum^N_{k=1}\int_U\left|\bv^{k}-\bv^{k-1}\right|^2/\tau dx\le C\int_U|D\bg|^2dx$, which holds from \eqref{DiscreteBound1}. 
\end{proof}

\par  Let us now see how the estimates \eqref{DiscreteBound1} and \eqref{DiscreteBound2} can be used to show the existence of 
a weak solution of \eqref{mainIVP}. First, we define
\begin{equation}\label{StepApprox}
\begin{cases}
\bv_N(x, t)=\bg(x), \hspace{.23in} t=0\\
\hspace{.6in}                 =\bv^{k}(x), \quad t\in (\tau_{k-1}, \tau_k]
\end{cases}
\end{equation}
and 
$$
\bu_N(x, t)  = \bv^{k-1}(x) +\frac{t-\tau_{k-1}}{\tau}(\bv^k(x)-\bv^{k-1}(x)), \quad t\in [\tau_{k-1}, \tau_k].
$$
Observe by \eqref{WeakIFT}, we have that for all $t\in [0,T]\setminus\{\tau_0,\tau_1,\dots,\tau_N\}$ and $\bw\in H^1_0(U;\R^m)$
$$
\int_U D\psi\left(\partial_t \bu_N(x,t)\right)\cdot  \bw(x)dx + \int_U DF(D\bv_N(x,t))\cdot D\bw(x)dx=0.
$$

\par By inequality \eqref{DiscreteBound1} and a few routine manipulations, we also have 
$$
\sup_{N\in \N}\left\{\int^T_0\int_U |\partial_t\bu_N|^2dxdt+\max_{0\le t\le T}\int_U(|D\bv_N(x,t)|^2+|D\bu_N(x,t)|^2)dx\right\}<\infty.
$$ 
Using this uniform bound and ideas given in the proof of Proposition \ref{CompactnessProp}, it can be shown (see for example \cite{Arai, Colli, Mielke, Visintin}) that there is a mapping $\bv\in C([0,T); L^2(U;\R^m))$ and a sequence $N_j\rightarrow \infty$ for which
$$
\bu_{N_j}, \bv_{N_j}\rightarrow \bv \quad \text{in}\quad L^2([0,T]; H^1_0(U;\R^m))\cap C([0,T); L^2(U;\R^m))
$$
and 
$$
\partial_t\bu_{N_j}\rightarrow \partial_t\bv  \quad \text{in}\quad L^2(U\times [0,T];\R^m).
$$
Moreover, $\bv$ satisfies the weak solution condition  \eqref{WeakSolnCond}.

\par All that remains to be verified is that $\bv$ satisfies the integrability \eqref{NaturalIVPSpace} and \eqref{NaturalIVPSpace2}.  Fortunately, we have \eqref{DiscreteBound2} at our disposal. For each $d\in (0,T/2)$, this estimate implies that the sequence 
$$
\max_{d\le t\le T-d}\int_U|\partial_t\bu_N(x,t)|^2dx+\int^{T-d}_d\int_U  |\partial_tD\bu_N(x,t)|^2dxdt
$$
is bounded for all sufficiently large $N\in \N$. From this boundedness property, we immediately have that that $\bv$ satisfies \eqref{NaturalIVPSpace}. We also have upon passing to a further subsequence if necessary that $\partial_tD\bu_{N_j}\rightharpoonup \xi$ in $L^2(U\times [d,T-d];\Mmn)$.  
Note for $\Psi\in C^\infty_c(U\times (d,T-d); \R^m)$
\begin{align*}
\int^{T-d}_d\int_U \xi\Psi dxdt & = \lim_{j\rightarrow\infty}\int^{T-d}_d\int_U\left(\partial_tD\bu_{N_j}\right)\Psi dxdt \\
&=-\lim_{j\rightarrow\infty}\int^{T-d}_d\int_U D\bu_{N_j}\Psi_t dxdt \\
&=  -\int^{T-d}_d\int_U D\bv \Psi_tdxdt.
\end{align*}
As a result, $D\bv_t|_{U\times [d,T-d]}=\xi$. It follows that $D\bv_t\in L^2_\text{loc}(U\times (0,T); \Mmn)$. Therefore, we conclude the existence of a weak solution of \eqref{mainIVP} as defined in Definition \ref{DirichletWeakSoln}. 
\begin{prop}
There exists a weak solution of \eqref{mainIVP}.
\end{prop}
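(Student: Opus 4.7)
The plan is to implement the strategy already sketched in the text: construct approximate solutions via the backward Euler (implicit time) scheme \eqref{IFT}, derive the discrete energy estimates \eqref{DiscreteBound1} and \eqref{DiscreteBound2}, build the piecewise-constant and piecewise-affine time interpolants $\bv_N$ and $\bu_N$, and extract a compactness-theoretic limit.

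First, I would verify solvability of each time step. Given $\bv^{k-1}\in H^1_0(U;\R^m)$, define the functional
\[
J_k(\bv):=\int_U \left[\tau\,\psi\!\left(\frac{\bv-\bv^{k-1}}{\tau}\right)+F(D\bv)\right]dx
\]
on $H^1_0(U;\R^m)$. By \eqref{UnifConv}, \eqref{UnifConv2} and \eqref{MoreEstPsi}, \eqref{MoreEstF}, $J_k$ is strictly convex, coercive, and weakly lower-semicontinuous on $H^1_0(U;\R^m)$, so it admits a unique minimizer $\bv^k$, whose Euler--Lagrange equation is precisely \eqref{WeakIFT}. Iterating gives the full solution sequence $\{\bv^0,\bv^1,\dots,\bv^N\}$ with $\bv^0=\bg$.

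Next I would form $\bv_N$ and $\bu_N$ as in \eqref{StepApprox}. From \eqref{DiscreteBound1}, the family $\{\bu_N\}$ is bounded in $L^\infty([0,T]; H^1_0(U;\R^m))\cap H^1(U\times(0,T);\R^m)$ and $\{\bv_N\}$ is bounded in $L^\infty([0,T]; H^1_0(U;\R^m))$, with $\|\bv_N-\bu_N\|_{L^2(U\times(0,T);\R^m)}\to 0$ (by the elementary estimate $\|\bv_N(\cdot,t)-\bu_N(\cdot,t)\|_{L^2(U;\R^m)}^2\le \tau\sum_k\|\bv^k-\bv^{k-1}\|_{L^2(U;\R^m)}^2/\tau$). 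From \eqref{DiscreteBound2}, for each $d\in(0,T/2)$ the restriction of $\partial_t\bu_N$ to $U\times[d,T-d]$ is bounded in $L^\infty([d,T-d];L^2(U;\R^m))\cap L^2([d,T-d];H^1(U;\R^m))$. Standard Aubin--Lions/Simon arguments (of the same flavor as in Proposition \ref{CompactnessProp}) then yield a subsequence (not relabeled) and a limit $\bv$ with
\[
\bv_N,\bu_N\to \bv\quad\text{in } L^2([0,T];H^1_0(U;\R^m))\cap C([0,T];L^2(U;\R^m)),
\]
$\partial_t\bu_N\to \partial_t\bv$ strongly in $L^2(U\times[0,T];\R^m)$, and (for each $d$) $D\partial_t\bu_N\rightharpoonup \xi$ weakly in $L^2(U\times[d,T-d];\Mmn)$ with $\xi=D\bv_t$ by a standard distribution argument as indicated in the text.

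Finally I would pass to the limit in the discrete weak formulation. For $\bw\in H^1_0(U;\R^m)$ and $\phi\in C^\infty_c(0,T)$, multiplying \eqref{WeakIFT} by $\phi$, summing in $k$, and integrating yields
\[
\int_0^T\!\!\int_U D\psi(\partial_t\bu_N)\cdot\bw\,\phi(t)\,dx\,dt+\int_0^T\!\!\int_U DF(D\bv_N)\cdot D\bw\,\phi(t)\,dx\,dt=0.
\]
The strong $L^2$ convergence of $\partial_t\bu_N$ (and pointwise a.e.\ after a further subsequence), combined with the global Lipschitz bound $|D\psi(w_1)-D\psi(w_2)|\le\Theta|w_1-w_2|$ from \eqref{MoreEstPsi}, lets me pass to the limit in the first term via dominated convergence; the second term is linear in a weakly convergent quantity after using Lipschitz continuity of $DF$ and strong $L^2$ convergence of $D\bv_N$. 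The main obstacle is exactly this passage in the nonlinear term $D\psi(\partial_t\bu_N)$, which is why strong convergence of $\partial_t\bu_N$ (and not just weak) is essential; the estimate \eqref{DiscreteBound2} together with Simon's theorem (applied on compact subintervals and then diagonalizing over $d\downarrow 0$) is precisely what delivers this. The limit equation gives \eqref{WeakSolnCond} for a.e.\ $t$; the integrability requirements \eqref{NaturalIVPSpaceZero}--\eqref{NaturalIVPSpace2} follow from the uniform bounds and lower semicontinuity; and $\bv(\cdot,0)=\bg$ is inherited from $\bu_N(\cdot,0)=\bg$ together with $\bu_N\to\bv$ in $C([0,T];L^2(U;\R^m))$.
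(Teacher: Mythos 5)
Your proposal follows essentially the same route as the paper: the implicit time scheme \eqref{IFT}, the discrete energy bounds \eqref{DiscreteBound1}--\eqref{DiscreteBound2}, the interpolants \eqref{StepApprox}, and a Simon-type compactness argument to pass to the limit in the nonlinear term $D\psi(\partial_t\bu_N)$, identify $D\bv_t$, and recover the initial condition, with your convex-minimization solvability of each time step being a detail the paper leaves to the references. The only caveat is that strong $L^2$ convergence of $\partial_t\bu_N$ needs, beyond \eqref{DiscreteBound2}, a time-translate (fractional time difference) estimate to invoke Simon's theorem --- precisely the point the paper itself defers to the ideas of Proposition \ref{CompactnessProp} and the cited literature.
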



\end{document}